\documentclass[12pt]{amsart}
\usepackage[T1]{fontenc}
\usepackage[utf8]{inputenc}
\usepackage[french,english]{babel}
\usepackage{graphicx}
\usepackage{refcount}
\usepackage{color}
\usepackage{enumitem}
\setenumerate[1]{label={\arabic*.}}
\setcounter{tocdepth}{1}
\usepackage{subfigure}

\newtheorem{prop}{Proposition}[subsection]
\newtheorem{theo}[prop]{Theorem}
\newtheorem{theo2}[prop]{Classification Theorem}
\newtheorem{cor}[prop]{Corollary}
\newtheorem{lem}[prop]{Lemma}
\theoremstyle{definition}
\newtheorem{deff}[prop]{Definition}
\theoremstyle{remark}
\newtheorem{rem}[prop]{Remark}
\newtheorem{rems}[prop]{Remarks}
\newtheorem*{question}{Questions}
\newtheorem{Q}[prop]{Question}

\newcommand\Pp{\mathbb P}

\catcode`@=11

\def\recopier#1{#1}
\def\@zero{0}
\def\sanspt#1{\edef\a{#1}\ifx\a\@zero Faux\else\edef\a{\expandafter\recopier\a}%
  \edef\a{\expandafter\recopier\a}\expandafter\Sanspt\a\fi}
\def\Sanspt#1.{#1}

\catcode`@=12

\let\joli=\mathcal 
\newcommand\ftilde{\tilde f}
\newcommand\gtilde{\tilde g}
\newcommand\Cp{\mathbb{C}} \def\R{\mathbb{R}}
\newcommand\N{\mathbb{N}} \def\Z{\mathbb{Z}}
\newcommand\Ss{\mathbb S}
\newcommand\inv{^{-1}}

\newcommand\vt{v^{\scriptscriptstyle{1\over 2}}}
\def\vtt^#1{v^{\scriptscriptstyle #1}}
\newcommand\Tt{{T_{\scriptscriptstyle{1\over 2}}}}
\newcommand\mTt{T_{\scriptscriptstyle{-{1\over 2}}}}
\newcommand\vun{v^{\scriptscriptstyle1}}
\newcommand\zpoint{{\dot z}}

\newcommand\hchapeau{{\widehat h}}
\newcommand\Ftilde{\widetilde F}%
\newcommand\fdag{{f^{\dagger}}}
\newcommand\gdag{{g^{\dagger}}}
\newcommand\zbar{{\overline z}}

\newcommand\wbar{{\overline w}}
\newcommand\sgn{\mathop{\rm sgn}\nolimits}

\newcommand\Diff{\mathop{\rm Diff}\nolimits}%
\newcommand\DiffC{\mathop{\rm Diff}\nolimits_1(0,\Cp,\overline\Cp)}%
\def\AA_#1{\joli A_{#1}\cup \overline{\joli A_{#1}}}%
\newcommand\AAkb{\AA_{k,b}}%
\let\phi=\varphi

\let\del=\partial
\newcommand\STt{\Sigma\circ\Tt}
\def\STtt_#1{\Sigma\circ T_{\scriptscriptstyle#1}}
\def\Ttt_#1{{T_{\scriptscriptstyle{#1}}}}
\expandafter\let\expandafter\over\csname @@over\endcsname
\expandafter\let\expandafter\atop\csname @@atop\endcsname

\let\wt=\widetilde

\def\raggedcenter{\leftskip=0pt plus4em \rightskip=\leftskip%
  \parfillskip=0pt \spaceskip=.3333em \xspaceskip=.5em
  \pretolerance=9999 \tolerance=9999 \parindent=0pt
  \hyphenpenalty=9999 \exhyphenpenalty=9999 }

\font\bfmi=cmmib10 at 12truept
\title[Germs of antiholomorphic parabolic diffeomorphisms]
{Analytic classification of germs of 
  parabolic antiholomorphic diffeomorphisms of codimension k}
\author{Jonathan Godin}
\email{godinj@dms.umontreal.ca}
\author{Christiane Rousseau}
\email{rousseac@dms.umontreal.ca}
\address{Universit\'e de Montr\'eal, C.P.\null{} 6128, Succ.\null{} Centre-Ville,
  Montr\'eal, Qc, Canada, H3C 3J7}
\date{\today}
\subjclass[2010]{37F45, 32H50}
\keywords{Antiholomorphic Dynamics, Antiholomorphic Parabolic Fixed Point, Local
  Classification under Analytic Conjugation, Space of Orbits near a Fixed Point}
\thanks{The first author was supported by a FRQ-NT PhD scholarship. 
  The second author is supported by NSERC in Canada.}

\usepackage{caption}
\usepackage[hidelinks]{hyperref}
\hypersetup{
  pdftitle={Analytic Classification of Germs of Parabolic Antiholomorphic 
    Diffeomorphism of Codimension k},
  pdfauthor={Jonathan Godin, Christiane Rousseau},
  pdfsubject={Complex dynamical systems},
  pdfkeywords={Antiholomorphic Dynamics, Antiholomorphic Parabolic Fixed Point, Local
  Classification under Analytic Conjugation, Space of Orbits near a Fixed Point}
}

\begin{document}

\begin{abstract}
  We investigate the local dynamics of antiholomorphic
  diffeomorphisms around a parabolic fixed point. We first 
  give a normal form. Then we give a complete classification 
  including a modulus space for antiholomorphic germs with 
  a parabolic fixed point under analytic conjugacy. We then 
  study some geometric applications: existence of real analytic 
  invariant curve, existence of holomorphic and antiholomorphic 
  roots of holomorphic and antiholomorphic parabolic germs, 
  commuting holomorphic and antiholomorphic parabolic germs.

\end{abstract}

\maketitle
\tableofcontents

\section{Introduction}

In this paper, we are interested in the local
dynamics of antiholomorphic diffeomorphisms with a
parabolic fixed point, i.e.\null{} a fixed point of
multiplicity $k+1$ (i.e.\null{} of codimension $k$). 
We study the classification under conjugacy by analytic
changes of coordinate of a germ of an anti\-holomorphic 
diffeomorphism $f$ with a parabolic fixed point.
In local coordinate, it may be chosen in the form
\begin{equation}\label{eq:f 1er}
  \textstyle
  f(z) = \zbar + {1\over2} \zbar^{k+1} + o(\zbar^{k+1})
\end{equation}
for some integer $k \geq 1$. 

The classification of parabolic fixed points
in the holomorphic case for
a germ 
\begin{equation}
  g(z) = z + z^{k+1} + \left({k+1\over 2}-b\right)z^{2k+1} + o(z^{2k+1})
\end{equation}
is well known. (See e.g.\hbox{} \cite{nonlinear} 
or \cite{LectDiffEq}.) The dynamics of $g$ 
(see Figure~\ref{fig:dyn g}) is determined by 
a topological invariant, the integer $k$,
a formal invariant, the complex number $b$,
and an analytic invariant given by an equivalence class
of $2k$ germs of diffeomorphisms which
are the transition functions on the space
of orbits of $g$ (the Écalle horn maps). Two germs $g_1$ and $g_2$
are formally equivalent if and only if they have the
same topological invariant and formal invariant.
Furthermore, they are analytically equivalent
if and only if they also have the same analytic
invariant.

\begin{figure}[htbp]
  \centering
  \includegraphics{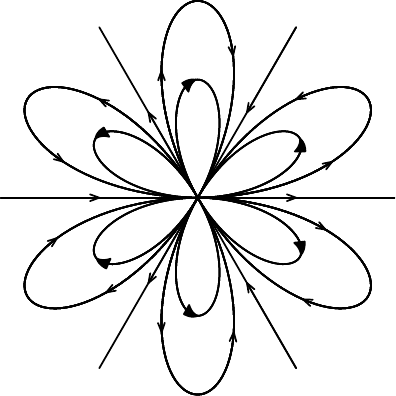}
  \caption{Dynamics of a holomorphic parabolic germ
  with topological invariant $k=3$}
  \label{fig:dyn g}
\end{figure}

The goal of this paper is to establish a 
local classification of anti\-holomorphic parabolic germs
under the analytic conjugation and to
describe the space of orbits of such a germ
and, more generally, to explore the geometric properties
of antiholomorphic parabolic germs which are invariant
under analytic conjugation.
This is done for fixed points of any multiplicity.
It allows us to provide a solution to the following problems.

\begin{question}
  \begin{enumerate}
    \item \label{Q:racine}\label{Q:premier}
      \textit{(Antiholomorphic Root Extraction)}
      The second iterate of an antiholomorphic parabolic
      germ $f$ as in equation~\eqref{eq:f 1er}
      is a holomorphic germ which is parabolic. When 
      is the converse true: Given a parabolic germ of holomorphic
      diffeomorphism $g$, when is it possible to write
      it as $g = f\circ f$, for some antiholomorphic parabolic germ
      $f$? We call $f$ an \emph{antiholomorphic square root}
      of $g$. More generally, when does $g$ have an antiholomorphic
      root of some order? 
    \item \label{Q:racine antihol}\ignorespaces\thinspace 
      Analogously, when does an antiholomorphic parabolic germ have
      an antiholomorphic root? When are the roots
      unique?
    \item\label{Q:flot}
      \textit{(Embedding)} Let $\{v^t\}_t$, where $v^t\colon z\mapsto v^t(z)$,
      be the flow of the differential equation $\dot z = v(z) = {z^{k+1}\over 1+bz^k}$.
      Each $v^t$ ($t\not=0$) is a holomorphic germ with a parabolic fixed
      point at the origin. Then $\overline{\vt(\cdot)}$ is
      an antiholomorphic germ, and any antiholomorphic parabolic
      germ is formally conjugate to such a germ. Given an antiholomorphic
      parabolic germ $f$, when is it analytically conjugate 
      to some $\overline{\vt(\cdot)}$? In that case, it allows
      to embed $f$ in the familly $\overline{v^t}$.
    \item \label{Q:courbe reelle} 
      When does an antiholomorphic germ preserve a germ of real analytic curve? 
      This is equivalent to say that the germ is analytically conjugate 
      to a germ with real coefficients.
    \item \label{Q:central} 
      \textit{(Centralizer)} 
      Can we describe all the antiholomorphic parabolic germs
      $f$ that commute with a holomorphic parabolic germ $g$? 
      If $f$ and $g$ commute, then $f$ sends the orbits of 
      $g$ on the orbits of $g$. This greatly restricts 
      the possible $f$. In an analogous way, can we describe 
      all the holomorphic and the antiholomorphic germs
      that commute with an antiholomorphic parabolic germ?
    \label{Q:dernier}
  \end{enumerate}
\end{question}

The above problems are questions about the equivalence
classes of germs under analytic conjugacy.
Therefore, the answer should be read in the
modulus of classification, which will be introduced
in Section~\ref{sec:module}.

The local dynamics of an antiholomorphic parabolic germ has
similarities with the holomorphic case: indeed
the $n$-th iterate $f^{\circ n}$ is holomorphic 
for $n$ even. We find that the dynamics is determined by the same
topological and formal invariants, but the
analytic invariant is composed of $k$ germs of
diffeomorphisms, instead of $2k$. This is explained
by the fact that an orbit of $f$ will usually jump between
two Fatou petals of its associated holomorphic parabolic germ $f^{\circ 2}$
(see Figure~\ref{fig:orbit haut bas}), so that the
dynamics in those petals are not independent.

\begin{figure}[htb]
  \includegraphics{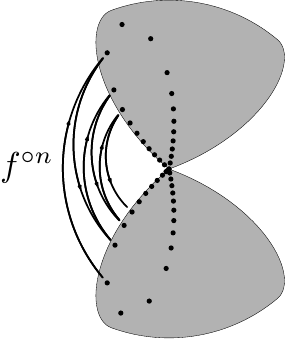}
  \caption{An orbit of $f$ jumping between
  two petals. An orbit of the second iterate $f^{\circ 2}$ will
  remain either in the upper petal or the
  lower petal.}
  \label{fig:orbit haut bas}
\end{figure}

We observe other differences from the holomorphic case.
A holomorphic germ has $2k$ formal separatrices. The
antiholomorphic germ has instead a privileged unique 
direction; a \textit{formal symmetry axis}. 
There is also a topological difference between 
the cases where the codimension is odd or even. When $k$ is 
even, the rotation $z\mapsto -z$ is a formal symmetry 
of $f$, whilst it is not for $k$ odd.


Antiholomorphic dynamics has been considered before in the context
of anti-polynomials, that is a polynomial function
of $\zbar$, $p(z) = \zbar^n + \cdots + a_0$. Iteration of anti-polynomials
was studied by Nakane and Schleicher in \cite{multiI},
Hubbard and Schleicher in \cite{multiNotPath}, and Mukherjee,
Nakane and Schleicher in \cite{multiII}.
Their focus is mostly on the family of anti-polynomials
$p_c(z) = \zbar^d + c$ and the description
of the connectedness locus $\mathcal M_d^\ast$
called the \emph{multicorn}. The context is
global in nature, but the local analysis contributes
significantly.

An important role is played by periodic orbits
of $p_c$ of odd period $k$, because when $k$
is odd, $p_c^{\circ k}$ is antiholomorphic. In this
case, all indifferent periodic orbits are parabolic
and they occur along real analytic arcs in the
parameter space, as proved in 
\cite{multiNotPath} and \cite{multiII}. 
However, only points of codimension~1 and~2 are observed. 
This is due to a choice of 
a special subfamily of anti-polynomials of degree
$d$. Indeed, higher codimension is already observed
in the 2-parameter family $\zbar^d + c_1\zbar + c_0$,
e.g.\null{} when $c_1 = 1$ and $c_0=0$.

One of the tools used for anti-polynomials is
called the \emph{Écalle height}, introduced
by Hubbard and Schleicher in \cite{multiNotPath}.
In codimension~1, on the Écalle cylinder of the
attractive petal, the imaginary part of an orbit
is intrinsic, and this is used to prove that
landing points of curves in the parameter space where
$p_c$ has a parabolic periodic orbit of odd period is
are points of codimension~2. When studying the space of
orbits of an antiholomorphic germ of parabolic diffeomorphism
of any codimension, we see that the Écalle height
has a meaning only on the Écalle cylinder of the petals
containing the formal symmetry axis of $f$.
This is seen by describing the space of orbits on a 
neighbourhood of a parabolic fixed point, which
we do in Section~\ref{sec:espace des orbites f}.

The paper is organized as follows.
In Section~\ref{sec:pt fixe parabolique}, we define the topological
and formal invariants of $f$. We also establish
a formal normal form for $f$.

In Section~\ref{sec:normal}, 
we study the formal normal
form. 

In Section~\ref{sec:Fatou}, we introduce the
rectifying coordinate and the Fatou coordinates
in order to define
the transition functions (Def{.}~\ref{def:module})
in Section~\ref{sec:module}, which is the
analytic invariant. This leads to the modulus 
of classification of $f$.

In Section~\ref{sec:espace des orbites}, we
recall a description of the space of orbits
in the holomorphic case using $2k$ spheres
(or Écalle cylinders) glued with the horn
maps (these are the expressions of the transition
functions in the coordinates of the spheres). 
We use this space of orbits in
Section~\ref{sec:espace des orbites f}
to identify the space of orbits of $f$ 
to a manifold of real dimension~2 
by quotienting the space of orbits of $f\circ f$
by the action of $f$. 

After describing the space of orbits,
we state, in Section~\ref{sec:class theo},
the main result of the paper: the Classification Theorem~\ref{theo:class}. 
The idea in spirit is that two germs are equivalent if
and only if their space of orbits are equivalent;
the Classification Theorem is a way to rigorously express
this statement.

Finally, with the Classification Theorem in hands,
in Section~\ref{sec:applications} we answer Questions~%
\sanspt{\getrefnumber{Q:premier}} to~\sanspt{\getrefnumber{Q:dernier}}
of the introduction.

\section{Antiholomorphic Parabolic Fixed Points}\label{sec:pt fixe parabolique}
\noindent{\bf Notation.} For the whole paper,
we will use the following notation~:
\begin{itemize}
  \item $\sigma(z) = \zbar$ is the complex conjugation;
  \item $\tau(w) = {1 \over \overline w}$ is the antiholomorphic inversion;
  \item $T_C(Z) = Z + C$ is the translation by $C\in \Cp$;
  \item $L_c(w) = cw$ is the linear transformation with multiplier $c\in \Cp$;
  \item $v^t$ is the time-$t$ of the vector field
    \begin{equation}\zpoint = v(z) = {z^{k+1} \over 1 + bz^k}.\end{equation}
\end{itemize}
\bigbreak

A function $f\colon U \to \Cp$ defined on a domain
$U\subseteq \Cp$ is antiholomorphic if
${\del f\over \del z} \equiv 0$ on $U$.
From this definition, together with the chain rule,
it follows that antiholomorphy is an intrinsic 
property of $f$ under holomorphic changes of variable.
Equivalently, $f\colon z\mapsto f(z)$ is antiholomorphic
if $f\circ\sigma\colon z\mapsto f(\zbar)$ is holomorphic, therefore
$f(z)$ expands in a power series in terms of $\zbar$.

Note that the multiplier at a fixed point
of an antiholomorphic function is not intrinsic, only its modulus
is. Indeed, a scaling of $\lambda$ will add a factor
of $\lambda\over \overline\lambda$ to the multiplier.

\vbox\bgroup
\begin{deff}[Parabolic fixed point]
A germ of antiholomorphic diffeomorphism
fixing the origin $f\colon (\Cp,0)\to (\Cp,0)$ 
has a \emph{parabolic fixed point} at $0$ if 
$0$ is an isolated fixed point and
$$
    \left|{\del f\over \del \zbar}(0)\right| = 1.
$$
We will also say that $f$ is an \emph{antiholomorphic
parabolic germ}.
\end{deff}
\egroup

\begin{prop}\label{prop:forme reelle}
  Let $f(z) = a_1\zbar + a_2 \zbar^2 + a_3 \zbar^3 + \cdots$.
  If $|a_1| = 1$, then $f$ is formally conjugate to
  a formal power series 
  \begin{equation}\label{eq:serie reel}
    \fdag(w) = \wbar + \sum_{n=2}^\infty A_n \wbar^n
  \end{equation}
  with real coefficients $A_n$. If there exists $n \geq 2$ such
  that $A_n\not =0$, then $0$ is a parabolic fixed point of $f$. 
  Let $n_0 =k+1$ be the minimum such $n$.
  Then a scaling brings $A_{k+1}$
  to ${1\over 2}$ if $k$ is odd
  (resp.\null{} $\pm{1\over 2}$ if
  $k$ is even).
\end{prop}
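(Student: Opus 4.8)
The plan is to produce the conjugacy degree by degree: a single rotation to normalize the linear term, then tangent-to-identity holomorphic changes of coordinate to render each successive coefficient real, and finally a real scaling to fix $A_{k+1}$.

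\emph{Linear normalization.} Since $|a_1|=1$, conjugating $f$ by a rotation $L_\lambda$ with $|\lambda|=1$, i.e. replacing $f$ by $L_\lambda\inv\circ f\circ L_\lambda = \lambda\inv f(\lambda\,\cdot)$, multiplies the linear coefficient by $\bar\lambda/\lambda$. As $\lambda$ ranges over the unit circle this factor sweeps the whole unit circle (consistent with the Remark that only the modulus of the multiplier is intrinsic), so a suitable $\lambda$ makes the linear coefficient equal to $1$. Hence I may assume $f(z)=\zbar+a_2\zbar^2+a_3\zbar^3+\cdots$.

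\emph{Reality, by induction on the degree (the crux).} Suppose that after finitely many holomorphic conjugacies $f(z)=\zbar+A_2\zbar^2+\cdots+A_{m-1}\zbar^{m-1}+a_m\zbar^m+\cdots$ with $A_2,\dots,A_{m-1}\in\R$. I would conjugate by $\phi(z)=z+cz^m$ (so $\phi\inv(z)=z-cz^m+O(z^{m+1})$) and read off the coefficient of $\zbar^m$ in $\phi\inv\circ f\circ\phi$. Because the linear part of $f$ is the reflection $\zbar$ and not $z$, the perturbation $cz^m$ enters the linear term conjugated, contributing $\bar c$, while $\phi\inv$ contributes $-c$; the new coefficient is $a_m+\bar c-c$ and the lower-order coefficients are unchanged. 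Since $\bar c-c=-2i\,\mathrm{Im}(c)$ is purely imaginary, taking $\mathrm{Im}(c)=\tfrac12\mathrm{Im}(a_m)$ yields $A_m=\mathrm{Re}(a_m)\in\R$. Composing these conjugacies formally gives $\fdag(w)=\wbar+\sum_{n\ge2}A_n\wbar^n$ with all $A_n$ real; antiholomorphy persists because every change of coordinate is holomorphic. I expect this homological step to be the main point: the key observation is that the available correction $\bar c-c$ is purely imaginary, so it kills $\mathrm{Im}(a_m)$ without touching $\mathrm{Re}(a_m)$. This is exactly where the antiholomorphic setting differs from the holomorphic tangent-to-identity normalization, in which the analogous correction vanishes and the coefficient cannot be moved.

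\emph{Parabolicity and final scaling.} Assume some $A_n\ne0$ and let $k+1$ be the least such index. Using that the $A_n$ are real, the (holomorphic) second iterate is $\fdag\circ\fdag(w)=w+2A_{k+1}w^{k+1}+O(w^{k+2})$, a nontrivial holomorphic parabolic germ; hence $0$ is an isolated fixed point of $\fdag\circ\fdag$ and therefore of $\fdag$ (as $\mathrm{Fix}(\fdag)\subseteq\mathrm{Fix}(\fdag\circ\fdag)$), and together with $|a_1|=1$ this makes $0$ parabolic. To normalize $A_{k+1}$ I would apply a real scaling $L_\lambda$, $\lambda\in\R\setminus\{0\}$, which preserves both the linear coefficient $1$ and the reality of all $A_n$, and sends $A_{k+1}$ to $A_{k+1}\lambda^k$. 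If $k$ is odd then $\lambda\mapsto\lambda^k$ is a bijection of $\R\setminus\{0\}$, so one reaches $A_{k+1}=\tfrac12$; if $k$ is even then $\lambda^k>0$ for all $\lambda\ne0$, so only the magnitude can be adjusted and one reaches $A_{k+1}=\sgn(A_{k+1})\tfrac12=\pm\tfrac12$. This parity dichotomy is precisely the source of the $\pm$ in the statement.
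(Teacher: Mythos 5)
Your proposal is correct and follows essentially the same route as the paper: the paper performs one formal change of coordinate $h(z)=\sum_n b_n z^n$ and determines the coefficients recursively from $A_n = b_n - \overline{b_n} + a_n + P_n$, which is exactly your observation that the available correction at each degree ($\bar c - c$ in your notation) is purely imaginary and thus kills $\Im(a_n)$, with the linear term handled by the same rotation argument. Your decomposition into elementary conjugacies $z\mapsto z+cz^m$, and your explicit treatment of parabolicity and of the final real scaling, are only organizational refinements of the paper's recursion.
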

\begin{proof}
  The proof is a mere computation. Let
  $w = \hchapeau(z) = \sum_{n\geq 1} b_n z^n$ be a formal change of
  coordinate and suppose $\fdag(w) = \wbar + \sum_{n\geq 2} A_n \wbar^n$.
  If we compare $h\circ f(z) = \fdag \circ h(z)$ degree by degree,
  we find an expression for the coefficients of the form
  $$
  \begin{cases}
    b_1a_1 = \overline{b_1}\\[2pt]
    A_n = b_n - \overline{b_n} + a_n + P_n(A_1,\ldots, A_{n-1},
    a_1,\ldots,a_{n-1}, b_1,\ldots, b_{n-1}),
  \end{cases}
  $$
  where $P_n$ is some polynomial. Hence, we have 
  $\arg b_1 = -{1\over2} \arg a_1 + \ell \pi$, with $\ell\in\Z$.
  With a recursive argument,
  if $A_1,\ldots, A_{n-1}$ are real, for $A_n$ to be real,
  we may choose $\Im b_n = {1\over 2}\Im(a_n + P_n)$,
  since $P_n$ depends only on terms that were fixed in the
  previous steps.
\end{proof}

\begin{rem}\label{rem:changement formel}
  The formal change of coordinate $\hchapeau$ is not unique.
  Indeed, only the imaginary part of the coefficients are
  determined, leaving their real part free. However, the order
  of the first non linear term is well defined. This leads
  to the following definition.
\end{rem}
\begin{deff}\label{def:codim}
  We say that $f$ is parabolic \emph{of codimension $k$} if the first non linear
  term of $\fdag$ is of order $k+1$.
\end{deff}
\begin{rems}
  \begin{enumerate}
    \item The formal series with real coefficients preserves the
  real axis. This indicates that $f$ has a privileged unique
  direction which we will call a \emph{formal 
  symmetry axis}. Hence a conjugacy between two 
  antiholomorphic parabolic germs must preserve the formal
  symmetry axis. We can of course suppose that this formal 
  axis is the real axis. Note however that in the case of 
  even codimension, there is no canonical orientation of the 
  formal symmetry axis. 
  
  \item The dynamics nearby the formal symmetry axis 
  is a topological invariant. When $k$ is odd, a rotation 
  of angle $\pi$ will flip the attractive semi-axis with 
  the repulsive one. When $k$ is even, both semi-axes are 
  either attractive (when $A_{k+1} < 0$) or repulsive 
  (when $A_{k+1} > 0$) (see Figures~\ref{fig:secteurs inv 
  impair} and~\ref{fig:secteurs inv pair}). In this paper, 
  {\bf we will only consider the case $A_{k+1} > 0$}. Indeed,
  when $A_{k+1} < 0 $, i.e.\null{} $f$ is of \emph{negative type},
  then $f\inv$ will be of positive type 
  and classifying $f\inv$ is equivalent to classifying $f$.
  \end{enumerate}
\end{rems}

\begin{deff}\label{def:type +} When the codimension $k$ is even,
  we say $f$ is of \emph{positive type} (resp.\null{}
  \emph{negative type}) if $A_{k+1} > 0$ (resp.\null{}
  $A_{k+1} < 0$), where $A_{k+1}$ is the first non zero 
  coefficient in~\eqref{eq:serie reel}.
\end{deff}

\begin{figure}[htbp]
  \centering
  \includegraphics{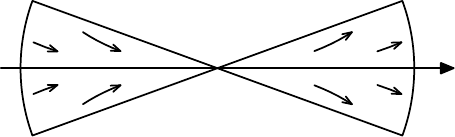}\hfil
  \includegraphics{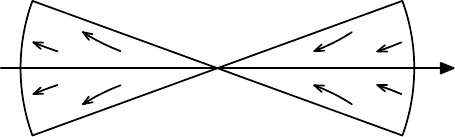}
  \caption{Dynamics near the formal symmetry axis
    of $f(z) = \zbar + o(\zbar)$ in \emph{odd}
    codimension. One sector is attractive and the
    other repulsive; this yields the two possibilities
    above.}
  \label{fig:secteurs inv impair}
\end{figure}
\begin{figure}[htbp]
  \centering
  \includegraphics{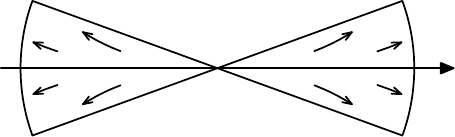}\hfil
  \includegraphics{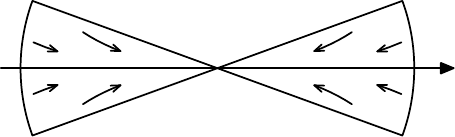}
  \caption{Dynamics near the formal symmetry axis
    of $f(z) = \zbar + o(\zbar)$ in \emph{even}
    codimension. The possibilities are: on the
    left, both sectors are repulsive (positive 
    type) and, on the right, both are attractive
    (negative type).}
  \label{fig:secteurs inv pair}
\end{figure}

The composition of two antiholomorphic germs
is holomorphic. Therefore, we will look at 
$g:= f\circ f$, which is a holomorphic
parabolic germ. Recall that in
the holomorphic case, the codimension
of $g$ is the order of the first non zero term
of $g(z) - z$. It is linked to the multiplicity of the fixed point:
$g$ is of codimension $k$ if and only if the fixed
point has multiplicity $k+1$. 

\begin{cor}
  $f$ is of codimension $k$ if only if $g = f\circ f$
  is of codimension $k$.
\end{cor}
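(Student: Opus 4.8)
The plan is to reduce everything to the formal normal form $\fdag$ supplied by Proposition~\ref{prop:forme reelle} and then compute the leading term of $\fdag\circ\fdag$ directly. First I would record that codimension is a formal conjugacy invariant: the order of the first nonlinear term of $\fdag$ is intrinsic by Remark~\ref{rem:changement formel}, and in the holomorphic world the codimension of a parabolic germ equals the multiplicity of its fixed point, which is unchanged under any invertible formal change of coordinate. Thus if $h$ is the formal change of coordinate conjugating $f$ to $\fdag$ (so $\fdag=h\circ f\circ h\inv$, with $h$ invertible since $b_1\neq 0$), then the \emph{same} $h$ conjugates $g=f\circ f$ to $\fdag\circ\fdag$, whence the codimension of $g$ equals that of $\fdag\circ\fdag$. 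It therefore suffices to prove the statement for $\fdag$.

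Next I would carry out the composition. Writing $\fdag(w)=\wbar+\sum_{n\geq k+1}A_n\wbar^n$ with the $A_n$ \emph{real} and $A_{k+1}\neq 0$, the reality of the coefficients gives $\overline{\fdag(w)}=w+\sum_{n\geq k+1}A_n w^n$. Substituting $u=\fdag(w)$ into $\fdag$ and using $\bar u=w+A_{k+1}w^{k+1}+O(w^{k+2})$ yields
\[
  \fdag\circ\fdag(w)=\bar u+A_{k+1}\bar u^{\,k+1}+\cdots
  = w+2A_{k+1}w^{k+1}+O(w^{k+2}).
\]
Hence the first nonzero term of $\fdag\circ\fdag(w)-w$ is of order $k+1$, so $g$ is of codimension $k$. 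Since the codimension of $f$ and that of $g$ are each a single well-defined integer, this computation delivers both implications at once: $f$ of codimension $k$ forces $g$ of codimension $k$, and conversely the codimension of $g$ determines $k$ uniquely.

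The step I expect to require the most care is showing that the leading coefficient genuinely survives, i.e.\null{} that $2A_{k+1}\neq 0$ and no cancellation occurs. This is exactly where passing to the normal form is indispensable: for an unnormalized germ $f(z)=\zbar+a_{k+1}\zbar^{k+1}+\cdots$ the same computation produces the leading coefficient $a_{k+1}+\overline{a_{k+1}}=2\Re(a_{k+1})$ in $g-\mathrm{id}$, which could vanish when $a_{k+1}$ is purely imaginary. Working with $\fdag$, where $A_{k+1}$ is real and nonzero by the very definition of codimension, removes this ambiguity and guarantees that the order is preserved rather than raised.
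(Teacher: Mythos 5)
Your proof is correct, and it is essentially the paper's intended argument: the paper states this corollary without proof, as an immediate consequence of Proposition~\ref{prop:forme reelle} and Definition~\ref{def:codim}, which is exactly what you flesh out by conjugating $g$ with the same formal change of coordinate and computing the leading coefficient $2A_{k+1}\neq 0$ of $\fdag\circ\fdag$. Your closing remark on why the reduction to real coefficients is needed (the unnormalized coefficient $2\Re(a_{k+1})$ can vanish) correctly identifies the one point where a naive direct computation would fail.
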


The case when $f \circ f = id$ is seen as a
degenerate case where $f$ is of ``codimension infinity''.
Indeed, it only happens if $f$ is analytically conjugate to
the complex conjugation, as is shown below. 
This case was excluded from our definition of 
parabolic point, since the fixed point of $\sigma$ at the origin
is not isolated.

\begin{prop}
  Let $f(z) = a_1\zbar + a_2 \zbar^2 + a_3 \zbar^3 + \cdots$
  be an antiholomorphic germ at the origin.
  The following statements are equivalent:
  \begin{enumerate}
    \item $f$ is formally conjugate to $\sigma$;
    \item $f$ is analytically conjugate to $\sigma$;
    \item $f\circ f = id$.
  \end{enumerate}
\end{prop}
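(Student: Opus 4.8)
The plan is to prove the three statements equivalent by establishing the cyclic chain $(1)\Rightarrow(3)\Rightarrow(2)\Rightarrow(1)$, two links of which are essentially free. The implication $(2)\Rightarrow(1)$ is immediate, since an analytic conjugacy is in particular a formal one. For $(1)\Rightarrow(3)$, suppose $f = h^{-1}\circ\sigma\circ h$ as formal power series, where $h$ is a formal holomorphic change of coordinate (conjugation must be holomorphic so that antiholomorphy is preserved). Then, using $\sigma\circ\sigma=\mathrm{id}$, I would compute $f\circ f = h^{-1}\circ\sigma\circ\sigma\circ h = \mathrm{id}$ at the level of formal series. Since $f\circ f$ is a genuine convergent holomorphic germ whose Taylor expansion is the identity, it follows that $f\circ f=\mathrm{id}$ as germs.

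The substantive step is $(3)\Rightarrow(2)$: given an antiholomorphic involution $f$, I must produce an \emph{analytic} holomorphic germ $h$ conjugating $f$ to $\sigma$. First I would normalize the linear part. Writing $a_1 = e^{i\theta}$ (note that $|a_1|=1$, since $|a_1|^2 z$ is the linear part of the involution $f\circ f=\mathrm{id}$), the linear conjugacy $L_\lambda$ with $\lambda = e^{-i\theta/2}$ replaces $a_1$ by $a_1\,\lambda/\overline\lambda = |a_1|^2 = 1$; this conjugacy is analytic and preserves the involution property. So I may assume $f(z)=\zbar + a_2\zbar^2+\cdots$. Then I define the symmetrization $h(z)=\tfrac12\bigl(z+\overline{f(z)}\bigr)$, which is a symmetrization of the identity under the order-two action generated by $f$. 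Since $f$ is antiholomorphic, $\overline{f(z)}$ is a convergent power series in $z$, so $h$ is a convergent holomorphic germ fixing the origin with $h'(0)=\tfrac12(1+\overline{a_1})=1$; hence $h$ is a local biholomorphism with convergent inverse. The key verification is the conjugacy relation: using $f\circ f=\mathrm{id}$, one gets $h\circ f(z)=\tfrac12\bigl(f(z)+\overline{f(f(z))}\bigr)=\tfrac12\bigl(f(z)+\zbar\bigr)$, while $\sigma\circ h(z)=\overline{h(z)}=\tfrac12\bigl(\zbar+f(z)\bigr)$, so $h\circ f=\sigma\circ h$ and therefore $f=h^{-1}\circ\sigma\circ h$ analytically.

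The only obstacle I anticipate is the invertibility of the symmetrizing germ $h$: without normalization, $h'(0)=\tfrac12(1+\overline{a_1})$ vanishes precisely when $a_1=-1$, a case that genuinely occurs (for instance $f(z)=-\zbar$). Performing the linear normalization $a_1=1$ beforehand removes this difficulty uniformly, so the argument goes through in all cases. No delicate convergence or small-divisor phenomena arise, since $h$ is assembled from finitely many convergent operations and its inverse is supplied by the holomorphic inverse function theorem.
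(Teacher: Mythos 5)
Your proposal is correct, and for the substantive implication $(3)\Rightarrow(2)$ it takes a genuinely different route from the paper. The paper works with the real and imaginary parts of $f$ as a map of $(\R^2,0)$: it applies the Implicit Function Theorem to locate a real-analytic curve of fixed points, rectifies that curve to the real axis, argues (using the involution property) that the whole curve consists of fixed points, and then invokes the Identity Theorem to conclude that $f$ equals $\sigma$ in the rectified coordinate. You instead use the classical Bochner-type averaging trick for finite-order germs: after normalizing the multiplier to $a_1=1$ (which, as you note, is essential, since otherwise $h'(0)=\tfrac12(1+\overline{a_1})$ could vanish, e.g.\ for $f(z)=-\zbar$), you set $h=\tfrac12\bigl(\mathrm{id}+\sigma\circ f\bigr)$ and verify directly that $h\circ f=\sigma\circ h$, the involution $f\circ f=\mathrm{id}$ being exactly what makes the intertwining identity work. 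Your verification is complete: $\sigma\circ f$ is holomorphic, so $h$ is a convergent holomorphic germ, and $h'(0)=1$ gives local invertibility. What each approach buys: yours is shorter, purely algebraic, and avoids two-real-variable analysis entirely; the paper's is more geometric in that it explicitly exhibits the invariant real-analytic curve (the fixed-point curve of $f$), which foreshadows the later results on germs preserving real-analytic curves (Theorem~\ref{theo:f reel}). Your treatment of $(1)\Rightarrow(3)$ and $(2)\Rightarrow(1)$ coincides with the paper's.
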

\begin{proof}
  $1.\Rightarrow 3.$
  Since there is a formal change of
  coordinate $m$ such that $m\circ f\circ m\inv = \sigma$, 
  we have $m \circ f\circ f\circ m\inv = id$ formally, which yields
  $f\circ f = id$.

  \noindent $3.\Rightarrow 2.$
  Let us suppose that $f\circ f=id$. In particular, $|a_1| = 1$.
  We can of course suppose $f$ is already in a coordinate such that $a_1 = 1$.

  Let $F_1(x,y) = \Re f(z)$ and $F_2(x,y) = \Im f(z)$, then we
  have
  $$
    F(x,y) = \begin{pmatrix} F_1(x,y)\cr F_2(x,y) \end{pmatrix}
      = \begin{pmatrix} 
	x + O(|(x,y)|^2)\cr -y + O(|(x,y)|^2)\cr\end{pmatrix}.
  $$
  We are interested in the fixed points of $F$, that is the zeros
  of $F-id$. Since ${\del\over \del y}(F_2(x,y) - y)\big|_{(0,0)} = -2$,
  by the Implicit Function Theorem, there exists an analytic
  curve $\gamma\colon t\mapsto t + i\eta(t)$ such that $F_2(x,y) - y = 0$ if and only if
  $y = \eta(x)$.
  
  We complexify $t$ to obtain a change of coordinate
  $t = \gamma\inv(z) = u + iv$ that rectifies the curve $\gamma$ on the real line.
  Let $\ftilde = \gamma\inv\circ f\circ \gamma$. In the new coordinate, 
  $\Ftilde = \gamma\inv\circ F\circ \gamma$ has now the form
  $$
    \Ftilde(u,v) = \begin{pmatrix}
      u + r(u,v) \cr -v(1 + O(|(u,v)|^2))
    \end{pmatrix},
  $$
  where $r(u,v) = O(|(u,v)|^2)$. The equation for fixed
  points $\Ftilde = id$ is equivalent to $v=0$ and
  $r(u,0) = 0$. If $r(u,0) = au^s + o(u^s)$, $a\not=0$, then this would
  contradict the fact that we must have
  $\Ftilde\circ \Ftilde(u,0) = \left({u\atop 0}\right)$.
  Therefore $r(u,0) \equiv 0$, in other words 
  $r(u,v) = v p(u,v)$.

  We see that the real axis is a line of fixed points for $\ftilde$ near the origin.
  By the Identity Theorem, because
  $\ftilde\circ \sigma - id = 0$ on the real axis near the origin, we have
  $\ftilde \equiv \sigma$.

  \noindent $2.\Rightarrow 1.$ This is immediate.
%
%
\end{proof}

\color{black}

The formal power series with real coefficients is used
to determine a formal normal form for $f$. Recall
that a formal normal form for $g:=f \circ f$ may be 
taken as the time-1 map of the
flow of~(see~\cite{LectDiffEq})
\begin{equation}\label{eq:champ}
  \zpoint = {z^{k+1} \over 1 + bz^k}
\end{equation}
for some constant $b\in \Cp$. We will call this constant $b$
the \emph{formal invariant}. It is also sometimes called
the ``résidu itératif'' and, as mentioned in \cite{multiNotPath},
it is determined by the holomorphic fixed point index, that is, the 
residue of ${1 \over z - g(z)}$ at the origin.

When $g = f\circ f$ is of codimension $k$, it is
possible to get rid of the terms of degree
$k+1 < j < 2k+1$ by an analytic change of
coordinate. In this coordinate, $g$ is
written
\begin{equation}\label{eq:g prenormal}
  g(z) = z + z^{k+1} + \left( {k+1 \over 2} - b\right)z^{2k +1} 
    + o(z^{2k + 1}),
\end{equation}
where $b\in\Cp$ is the formal invariant of $g$.
When $g$ is in this form, we will say that it is \emph{prenormalized}.

\begin{deff}\label{def:formel}
  The \emph{formal invariant} of $f$ is the 
  formal invariant of $f \circ f$, which
  is the constant $b$ in~\eqref{eq:g prenormal}.
\end{deff}

As the name suggests, $b$ is invariant under formal
changes of coordinate. Since 
$\gdag := \fdag \circ \fdag$ and $g$ have the
same formal invariant, where $\fdag$ is as 
in~\eqref{eq:serie reel}, it follows that $b$ is
real because all the coefficients of $\gdag$
are real.

An important consequence of this, is that the
time-$t$ map $v^t$ of~\eqref{eq:champ} 
for $t\in\R$ has a power series at $0$ with real coefficients,
that is the complex conjugation $\sigma$ and $v^t$ commute.
    
\begin{prop}
  Let $\vt$ be the time-${1\over2}$ of the vector field~\eqref{eq:champ}
  for some $b$. If $f$ is of codimension~$k$, of positive type if $k$
  is even and if $f$ has formal invariant $b$,
  then $f$ and $\sigma\circ\vt$ are formally conjugate.
\end{prop}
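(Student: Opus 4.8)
The plan is to push everything through the holomorphic square $g = f\circ f$, for which the normal form $\vun$ is already available (it is the time-$1$ map of~\eqref{eq:champ}), and then to recover $f$ as an antiholomorphic square root of $\vun$ while carrying a real structure along. First I would replace $f$ by its real normal form: by Proposition~\ref{prop:forme reelle} we may assume $f = \fdag = \wbar + {1\over2}\wbar^{k+1} + o(\wbar^{k+1})$, with real coefficients, and --- using that $f$ is of positive type when $k$ is even --- with leading coefficient exactly $+{1\over2}$. Then $g = \fdag\circ\fdag = z + z^{k+1} + o(z^{k+1})$ is a holomorphic parabolic germ of codimension $k$ with real coefficients, leading coefficient $+1$, and formal invariant $b$. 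Since $g$ and $\vun$ share the same codimension, the same leading coefficient, and the same (real) formal invariant $b$, there is a formal change of coordinate tangent to the identity conjugating $g$ to $\vun$.

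The first genuine step is to arrange this conjugacy to be \emph{real}. Starting from any tangent-to-identity $\Phi$ with $\Phi\circ g\circ\Phi\inv = \vun$, I would conjugate the coefficients: because $g$ and $\vun$ are real and the flow has real coefficients (as $b\in\R$, so that $\sigma$ commutes with every $v^t$), the coefficient-conjugate transformation again conjugates $g$ to $\vun$, whence $\Phi$ and its conjugate differ by an element of the centralizer of $\vun$. Comparing leading terms, this element lies in the formal flow, say $v^\tau$, and a short computation forces $\tau$ to be purely imaginary; correcting $\Phi$ by $v^{\tau/2}$ then produces a real tangent-to-identity $\Psi$ with $\Psi\circ g\circ\Psi\inv = \vun$.

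Setting $\ftilde = \Psi\circ\fdag\circ\Psi\inv$, the germ $\ftilde$ is antiholomorphic, commutes with $\sigma$ (being conjugate to the real germ $\fdag$ by the real map $\Psi$), has linear part $\wbar$ since $\Psi$ is tangent to the identity, and satisfies $\ftilde\circ\ftilde = \Psi\circ g\circ\Psi\inv = \vun$. Writing $\ftilde = \sigma\circ h$ with $h = \sigma\circ\ftilde$ holomorphic, the relation $\sigma\circ\ftilde = \ftilde\circ\sigma$ shows that $h$ has real coefficients, that $h = z + {1\over2}z^{k+1} + \cdots$ is tangent to the identity, and that $\ftilde\circ\ftilde = \vun$ becomes $h\circ h = \vun$. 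Thus $h$ is a tangent-to-identity holomorphic square root of $\vun$; since every tangent-to-identity germ commuting with $\vun$ lies in the formal flow and the flow is faithful, $h = \vt$ is forced. Hence $\ftilde = \sigma\circ\vt$, and $f$ is formally conjugate to $\sigma\circ\vt$.

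The main obstacle is the reality step in the second paragraph: producing a tangent-to-identity normalization $\Psi$ that is simultaneously real, since this is exactly what lets the square root $h = \sigma\circ\ftilde$ inherit real coefficients and the positive leading coefficient $+{1\over2}$, pinning it down to $\vt$ rather than an inequivalent root. The positive-type hypothesis (for $k$ even) enters precisely here: it forces the leading coefficient of $g$ to be $+1$ rather than $-1$, so that $g$ is conjugate to $\vun$ by a tangent-to-identity map (and not to the time-$1$ map of $-v$, which no tangent-to-identity change can reach); for $k$ odd no such sign ambiguity arises and the argument goes through unchanged.
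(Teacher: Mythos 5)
Your proof is correct, but it follows a genuinely different route from the paper's. The paper never forms the square $g=\fdag\circ\fdag$: it observes that $\fdag\circ\sigma$ is itself a holomorphic parabolic formal series with real coefficients and leading part $z+{1\over2}z^{k+1}$, conjugates it to $\vt$ by a real tangent-to-identity change $h$, and then unwinds $h\circ(\fdag\circ\sigma)\circ h\inv=\vt$ into $h\circ\fdag\circ h\inv=\vt\circ\sigma=\sigma\circ\vt$, using only that $h$ commutes with $\sigma$. You instead normalize the square $g$ to $\vun$ by a real tangent-to-identity $\Psi$ and then pin down $\Psi\circ\fdag\circ\Psi\inv$ as $\sigma\circ\vt$ through a square-root-uniqueness argument. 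This costs you two extra (standard) ingredients that the paper's direct normalization never invokes explicitly: that the tangent-to-identity formal centralizer of $\vun$ is the flow $\{v^\tau\}_{\tau\in\Cp}$, and that the flow is faithful, so $h\circ h=\vun$ with $h$ in the centralizer forces $h=\vt$. In exchange, your write-up makes fully rigorous the reality step (conjugate the coefficients of the normalizing series, observe the discrepancy is some $v^\tau$ with $\tau$ purely imaginary, correct by $v^{\tau/2}$) which the paper only asserts in one line (``the formal change of coordinate $h$ commutes with $\sigma$, provided that $h'(0)=1$''); indeed, your centralizer trick is essentially what is needed to justify that assertion, so the two proofs share the same hidden core. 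Your closing remark on where the positive-type hypothesis enters --- forcing the leading coefficient $+{1\over2}$ in Proposition~\ref{prop:forme reelle}, hence $+1$ for $g$, which no tangent-to-identity change could repair if the sign were wrong --- is also accurate and matches the role this hypothesis plays in the paper.
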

\begin{proof}
  Let $\fdag$ be the formal power series with real
  coefficients formally conjugate to $f$ in
  Proposition~\ref{prop:forme reelle}. Then $\fdag\circ \sigma$
  is a parabolic formal power series of $z$.
  A formal normal form may be chosen as $\vt$, the time-$1\over 2$
  of the vector field~\eqref{eq:champ}. Since both the coefficients
  of $\vt$ and $\fdag$ are real, the formal change of coordinate $h$
  commutes with $\sigma$, provided that $h'(0) = 1$ , 
  so that $\vt\circ \sigma$ is formally conjugate to $f$.
\end{proof}

The formal change of coordinate conjugating $f$
to its formal normal form can always be truncated
at the $(2k+2)$-th term, which yields a holomorphic change
of coordinate taking $f$ to the form
\begin{equation}\label{eq:prenormal}
  f(z) = \zbar + {1\over 2}\zbar^{k+1} + \left({k+1 \over 8}
    - {b \over 2}\right)\zbar^{2k+1} + o(\zbar^{2k+1}),
\end{equation}
that is $f$ and $\sigma\circ \vt$ have the same
first three terms.
\begin{deff}\label{deff:prenormal}
  When $f$ is in the form~\eqref{eq:prenormal}, we will say that it
  is \emph{prenormalized}.
\end{deff}

\begin{rem}
  In even codimension, $f$ may only be prenormalized as
  in~\eqref{eq:prenormal} when $A_{k+1} > 0$. In odd codimension,
  $f$ may always be prenormalized as in~\eqref{eq:prenormal}.
\end{rem}

The formal normal form is a model to which the germs can
be compared. Now that this form has been established, we
describe its properties.

\section{Properties of the Formal Normal Form}\label{sec:normal}
Let us start with the following observations.
\begin{prop}\label{prop:prop v} Let $v$ be the vector field~\eqref{eq:champ}
  of codimension $k$ and formal invariant $b$.
  \begin{enumerate}
    \item $v$ is invariant under the rotations of order $k$.
    \item $v$ is invariant under the complex conjugation $\sigma$
    when $b$ is real.
  \end{enumerate}
\end{prop}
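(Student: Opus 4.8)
The plan is to reduce both parts to the defining condition for a map to preserve a vector field, and then carry out a one-line substitution in each case. First I would fix terminology: a transformation $\phi$ leaves $\zpoint = v(z)$ invariant precisely when it commutes with the flow, equivalently when it carries oriented integral curves to oriented integral curves. For a holomorphic $\phi$ this is the pushforward identity $v(\phi(z)) = \phi'(z)\,v(z)$. For the antiholomorphic map $\sigma(z) = \zbar$ the criterion takes a different shape: if $z(t)$ solves $\zpoint = v(z)$ and we set $w(t) = \overline{z(t)}$, then $\dot w = \overline{\dot z} = \overline{v(\wbar)}$, so $\sigma$ preserves the field exactly when $\overline{v(\zbar)} = v(z)$ as an identity in $z$, that is, when $v$ has real coefficients.

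For the first assertion, I would take the rotation $R(z) = \zeta z$ with $\zeta = e^{2\pi i/k}$ a primitive $k$-th root of unity, which generates the cyclic group of order-$k$ rotations; here $R'(z) = \zeta$ and $\zeta^k = 1$. Substituting and using $\zeta^k = 1$ gives
\[
  v(\zeta z) = \frac{\zeta^{k+1} z^{k+1}}{1 + b\,\zeta^k z^k}
    = \zeta\,\frac{z^{k+1}}{1 + b z^k} = \zeta\,v(z) = R'(z)\,v(z),
\]
which is exactly the invariance identity; hence $v$ is invariant under this generator, and therefore under the whole group of order-$k$ rotations.

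For the second assertion, assuming $b\in\R$, a direct conjugation gives
\[
  \overline{v(\zbar)} = \overline{\left(\frac{\zbar^{\,k+1}}{1 + b\,\zbar^{\,k}}\right)}
    = \frac{z^{k+1}}{1 + \overline b\, z^k} = \frac{z^{k+1}}{1 + b z^k} = v(z),
\]
since $\overline b = b$. By the criterion above, $\sigma$ preserves $v$.

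Both computations are routine; the only point deserving care is the antiholomorphic case, where one must confirm there is no time reversal, i.e.\ that $\sigma$ commutes with the forward flow rather than conjugating it to $v^{-t}$. This is settled by the identity $\dot w = \overline{\dot z}$: the derivative is taken with respect to the real time $t$, so conjugating a trajectory preserves its time orientation, and when $\overline{v(\zbar)} = v(z)$ the conjugated curve solves the same equation forward in time. This agrees with the statement already recorded above that $\sigma$ and $v^t$ commute when $b$ is real.
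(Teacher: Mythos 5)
Your proof is correct. The paper states this proposition as an observation and omits the proof entirely, so your direct verification via the pushforward identity $v(\phi(z)) = \phi'(z)\,v(z)$ (and its antiholomorphic analogue $\overline{v(\zbar)} = v(z)$) is exactly the routine computation the paper leaves implicit; your extra care in checking that $\sigma$ does not reverse time orientation is consistent with the paper's own remark that $\sigma$ and $v^t$ commute when $b$ is real.
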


The holomorphic and antiholomorphic formal normal
forms are respectively
\begin{align}\label{eq:vun}
  v^1(z) &= z + z^{k+1} + \left({k+1\over 2} - b\right)z^{2k+1}
    + o(z^{2k+1})\\[5pt]\label{eq:vt}
  \sigma\circ \vt(z) &= \zbar + {1\over2}\zbar^{k+1} 
    + \left({k+1\over 8} - {b\over 2}\right)\zbar^{2k+1}
    + o(\zbar^{2k+1}),
\end{align}
where $v^t$ is the time-$t$ of $v$.

We see that the real axis is a symmetry axis. We introduce
a notation for the other symmetry axes.
\begin{deff}
  Let $\sigma_\ell$ denote the reflection
  \begin{equation}
    \sigma_\ell(z) := e^{2i\pi\ell\over k} \zbar,
    \qquad\rlap{\qquad for $\ell=0,\ldots,k-1$.}
  \end{equation}
\end{deff}

\begin{cor}\label{cor:prop v}
  \null\hfill
  \begin{enumerate}
    \item $v$ is invariant under $\sigma_\ell$ for $\ell=0,\ldots,k-1$ when
      $b$ is real;
    \item $\vun$ commutes with any rotation of order $k$, and when $b$ is real,
      it commutes with $\sigma_\ell$ for $\ell = 0,\ldots,k-1$;
    \item When $k$ is even, $\sigma\circ \vt$ commutes with $z\mapsto -z$.
  \end{enumerate}
\end{cor}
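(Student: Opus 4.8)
The plan is to reduce all three assertions to two elementary principles together with Proposition~\ref{prop:prop v}. The first principle is the functoriality of the pushforward of a vector field, $(\phi\circ\psi)_* v = \phi_*(\psi_* v)$, so that a composition of symmetries of $v$ is again a symmetry of $v$. The second is the standard fact that a holomorphic diffeomorphism $\phi$ leaving $v$ invariant conjugates its flow, $\phi\circ v^t = v^t\circ\phi$ for all $t$, which follows by differentiating $t\mapsto\phi(v^t(z))$ and invoking uniqueness of solutions of~\eqref{eq:champ}. I will also use the remark preceding this section: when $b$ is real the coefficients of $v^t$ are real for real $t$, so that $\sigma\circ v^t = v^t\circ\sigma$.

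For part~1, I would write each reflection as a composition $\sigma_\ell = \rho_\ell\circ\sigma$, where $\rho_\ell\colon z\mapsto e^{2i\pi\ell/k}z$ is a rotation of order $k$. By Proposition~\ref{prop:prop v}(1) the field $v$ is invariant under $\rho_\ell$, and by Proposition~\ref{prop:prop v}(2) it is invariant under $\sigma$ when $b$ is real; functoriality then yields $(\sigma_\ell)_* v = (\rho_\ell)_*(\sigma_* v) = (\rho_\ell)_* v = v$.

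For part~2, the invariance of $v$ under $\rho_\ell$ (which needs no hypothesis on $b$) together with the flow-conjugation principle gives $\rho_\ell\circ\vun = \vun\circ\rho_\ell$ at once. When moreover $b$ is real, combining $\sigma\circ\vun = \vun\circ\sigma$ with this identity along $\sigma_\ell = \rho_\ell\circ\sigma$ produces $\sigma_\ell\circ\vun = \rho_\ell\circ\sigma\circ\vun = \rho_\ell\circ\vun\circ\sigma = \vun\circ\rho_\ell\circ\sigma = \vun\circ\sigma_\ell$. For part~3, when $k$ is even the map $z\mapsto -z$ equals $\rho_{k/2}$ and is thus a rotation of order $k$, so the flow-conjugation principle gives $\rho_{k/2}\circ\vt = \vt\circ\rho_{k/2}$; since $\rho_{k/2}$ also commutes with $\sigma$ (both $\rho_{k/2}\circ\sigma$ and $\sigma\circ\rho_{k/2}$ send $z$ to $-\zbar$), the same bootstrapping gives $(\sigma\circ\vt)\circ\rho_{k/2} = \sigma\circ\rho_{k/2}\circ\vt = \rho_{k/2}\circ\sigma\circ\vt = \rho_{k/2}\circ(\sigma\circ\vt)$.

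The only point requiring genuine care is the bookkeeping for the antiholomorphic pushforward in part~1: under $\sigma$ a tangent vector is conjugated, so the invariance $\sigma_* v = v$ is precisely the identity $\overline{v(\zbar)} = v(z)$, that is, reality of the coefficients of $v$. Once this convention is fixed the rest is routine, and I would stress that in parts~2 and~3 I never invoke a flow-conjugation statement for an antiholomorphic symmetry directly; instead I bootstrap from the separately established commutations with $\rho_\ell$ and with $\sigma$, keeping the whole argument inside the holomorphic theory of the flow of~\eqref{eq:champ}.
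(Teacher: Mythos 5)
Your proof is correct and matches the paper's (implicit) route: the paper states this corollary without proof, precisely because it is meant to follow from Proposition~\ref{prop:prop v} by writing $\sigma_\ell = \rho_\ell\circ\sigma$ and using that a symmetry of $v$ conjugates its flow, which is exactly what you do. Your extra care with the antiholomorphic pushforward convention and with avoiding flow-conjugation for antiholomorphic maps (bootstrapping instead from the separately established commutations with $\rho_\ell$ and $\sigma$) is sound and fills in the details the paper leaves to the reader.
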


We will only be interested in real values of $b$.

\vbox{
\begin{prop}[Roots of the normal forms]\label{prop:racine v}
  \null\hfill
  \begin{enumerate}
    \item For $n$ even, $\vun$ has $k$ one-parameter families
    of antiholomorphic $n$-th roots given by $\sigma_\ell\circ \vtt^{{1\over n} + iy}$
    for $y\in\R$, $\ell=0,\ldots,k-1$.
    \item For $n$ odd, $\sigma\circ \vt$ has exactly one antiholomorphic
    $n$-th root given by $\sigma\circ \vtt^{1\over 2n}$.
  \end{enumerate}
\end{prop}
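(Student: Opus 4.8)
The plan is to treat the two statements in parallel: first to check that the listed germs are genuinely $n$-th roots (a direct flow computation), and then to show they are the only ones (a centralizer argument). Throughout I write $\rho(z) = e^{2i\pi/k}z$, so that $\rho^\ell$ is the rotation $z\mapsto e^{2i\pi\ell/k}z$ and $\sigma_\ell = \rho^\ell\circ\sigma$. The preliminary facts I would record are: since $b$ is real, $v$ is invariant under $\rho$ and under $\sigma$ (Proposition~\ref{prop:prop v}), so each time map $v^t$ commutes with $\rho$ and satisfies the reflection rule $\sigma\circ v^t\circ\sigma = v^{\overline t}$. The latter follows by noting that $w(t) := \overline{v^{\overline t}(\overline z)}$ solves $\dot w = v(w)$ with $w(0)=z$, because $v$ has real coefficients. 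I will also use the group law $v^s\circ v^t = v^{s+t}$ and the expansion $v^t(z) = z + t\,z^{k+1} + \cdots$, which shows that $t\mapsto v^t$ is \emph{injective}, since $t$ is read off as the coefficient of $z^{k+1}$. Finally, the $\sigma_\ell$ are involutions and $\sigma\circ\rho^\ell\circ\sigma = \rho^{-\ell}$.

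For the forward direction, these rules give $(\sigma_\ell\circ v^s)^{\circ 2} = \sigma_\ell\circ v^s\circ\sigma_\ell\circ v^s = v^{\overline s}\circ v^s = v^{2\Re s}$, independently of $\ell$. For $n$ even and $s = \frac1n + iy$ this yields $(\sigma_\ell\circ v^s)^{\circ n} = (v^{2/n})^{\circ(n/2)} = v^1 = \vun$, proving that the germs in statement~1 are roots. For $n$ odd, $(\sigma\circ v^{1/(2n)})^{\circ 2} = v^{1/n}$, so $(\sigma\circ v^{1/(2n)})^{\circ n} = \sigma\circ v^{1/(2n)}\circ v^{(n-1)/(2n)} = \sigma\circ v^{1/2} = \sigma\circ\vt$, proving that the germ in statement~2 is a root.

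For completeness, I would argue that an antiholomorphic $n$-th root $f$ commutes with its $n$-th power, hence with $\vun$ (in statement~2, with $(\sigma\circ\vt)^{\circ 2} = \vun$). Writing $f = \sigma\circ h$ with $h = \sigma\circ f$ holomorphic, and using $\vun\circ\sigma = \sigma\circ\vun$, the relation $f\circ\vun = \vun\circ f$ reduces to $h\circ\vun = \vun\circ h$. Thus everything rests on the centralizer of $\vun$ among holomorphic germs, which I claim is exactly $\{\rho^\ell\circ v^t : \ell = 0,\dots,k-1,\ t\in\Cp\}$; correspondingly the antiholomorphic germs commuting with $\vun$ are exactly $\{\sigma_\ell\circ v^t\}$. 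Granting this, I write $f = \sigma_\ell\circ v^t$ and impose the root condition. For $n$ even, $f^{\circ 2} = v^{2\Re t}$ gives $f^{\circ n} = v^{n\Re t}$, so injectivity of $t\mapsto v^t$ forces $\Re t = \frac1n$ while $\ell$ and $\Im t = y$ stay free, which is exactly the $k$ one-parameter families of statement~1. For $n$ odd, $f^{\circ n} = \sigma_\ell\circ v^{t+(n-1)\Re t}$; comparing linear parts with $\sigma\circ v^{1/2}$ forces $\ell = 0$ (as $v^{1/2}$ is tangent to the identity), and then injectivity gives $t + (n-1)\Re t = \frac12$, whence $t = \frac1{2n}$ and $f = \sigma\circ v^{1/(2n)}$ is the unique root.

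The genuine content, and the one step that is not a formal manipulation, is the centralizer claim: that no commuting germ lies outside the flow-plus-rotation family and that the admissible linear parts are precisely the $k$-th roots of unity. I would obtain it from the fact that $\vun$ coincides with its own analytic normal form, so that its formal and analytic centralizers agree and both equal $\{\rho^\ell\circ v^t\}$ (the classical centralizer computation for a parabolic germ in normal form; see \cite{LectDiffEq,nonlinear}). The quantization of the linear part of $f$ to the angles $2\pi\ell/k$ then comes for free from the finite list of linear parts available in this centralizer, and this is the point I expect to require the most care.
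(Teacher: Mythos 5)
Your proof is correct. There is, however, nothing in the paper to compare it against: Proposition~\ref{prop:racine v} is stated without proof, as an observation following from the symmetries of $v$ recorded in Proposition~\ref{prop:prop v} and Corollary~\ref{cor:prop v}. Your verification half (the identities $\sigma\circ v^t\circ\sigma=v^{\bar t}$, $(\sigma_\ell\circ v^s)^{\circ 2}=v^{2\Re s}$, and the group law of the flow) is exactly the computation the paper leaves implicit, and it is carried out correctly. The genuine added content is your completeness half: reducing an arbitrary antiholomorphic $n$-th root to an element commuting with $\vun$, hence (writing $f=\sigma\circ h$ and using that $\vun$ has real coefficients) to the holomorphic centralizer of $\vun$, which equals $\{\rho^\ell\circ v^t\}$ because $\vun$ is its own normal form, so its formal centralizer $\{\rho^\ell\circ\exp(tv)\}$ consists of convergent germs and coincides with the analytic one. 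That step is what turns ``exactly one'' in part~2 and the exhaustiveness of the $k$ families in part~1 into proved statements rather than a list of examples, and the paper genuinely needs this exhaustiveness later: in Section~\ref{sec:racine} the arguments invoke Proposition~\ref{prop:racine v} precisely to assert that \emph{every} root takes the form $\Sigma\circ T_{{1\over n}+iy}$ in Fatou coordinates. Your appeal to the classical centralizer computation in \cite{LectDiffEq,nonlinear} is legitimate and non-circular here, since the paper's own centralizer results (Theorems~\ref{theo:centr g} and~\ref{theo:centr f}) come later and are stated only for multiplier-one germs; the one point requiring care, as you note, is that a commuting germ must have linear part a $k$-th root of unity, which follows from comparing the coefficients of $z^{k+1}$ in $h\circ\vun=\vun\circ h$.
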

}

We ask the following questions,
which will be answered in Section~\ref{sec:racine}.
\begin{Q}\label{Q:racines bis}
  For a holomorphic parabolic germ $g$, how many distinct
  antiholomorphic $n$-th roots ($n$ even) does it have?
\end{Q}

\begin{Q}\label{Q:racines antihol}
  For an antiholomorphic parabolic germ $f$ and $n$ odd, when is the formal
  $n$-th root convergent?
\end{Q}

\section{Fatou Coordinates}\label{sec:Fatou}
For the whole section, when the codimension of $f$
is even, we will suppose $f$ is of positive type
(see Definition~\ref{def:type +}).
The formal normal form $\sigma\circ \vt$ is a model
to which it is natural to compare the antiholomorphic germ
$f$. In the holomorphic study of parabolic germs, 
we use holomorphic diffeomorphisms
called Fatou coordinates
defined on sectors covering the origin on which
the germ is conjugated to its normal form, i.e.\null{} 
changes of coordinates to the normal form. We
then compare Fatou coordinates on the intersection
of the sectors, thus yielding a conformal invariant 
describing the space of orbits of the germ.
See \cite{nonlinear} or \cite{LectDiffEq}
for the details.

The same approach can be adapted to the
antiholomorphic case. It will be necessary
to find a sectorial normalization (Fatou
coordinates) of the antiholomorphic germ
$f$. However, instead of adapting the construction
of the holomorphic case, we will prove that
it is possible to choose Fatou coordinates
of $f \circ f$, which is holomorphic, that
are also Fatou coordinates of $f$.

\subsection{Rectifying Coordinates and sectors} 
Suppose that an antiholomorphic parabolic germ $f$ is of codimension~$k$ for
$k\geq 1$ with a formal invariant $b$ (see Def.\null{} \ref{def:formel}).
The Fatou coordinates $\varphi_j$ are often constructed
in the rectifying coordinate given by the time of the
vector field~\eqref{eq:champ}. Since $\vt$ and $\vun$
are the time maps of the vector field~\eqref{eq:champ},
we define the time coordinate by
\begin{equation}\label{eq:coord temps}
  Z(z) = \int_{z_0}^z {1 + b\zeta^k \over \zeta^{k+1}}{\rm d}\zeta
    = -{ 1\over kz^k} + b\log z + {1 \over kz_0^k} - b\log z_0
\end{equation}
which is multi-valued. See Figure~\ref{fig:stationnement}
for its Riemann surface. It is the inverse of the flow
of~\eqref{eq:champ} with starting point $z_0$. We will
single out the following $2k+1$ charts of $Z$:
\begin{equation}\label{eq:Zj}
\setbox0=\hbox{$Z_j(z)$}
\dimen0=\wd0
\setbox0=\hbox{$ {}=-{1 \over kz^k} + b\log z + {ji\pi \over k}$}
\dimen1=\wd0
\dimen2=\textwidth
\advance\dimen2 by -\dimen0
\advance\dimen2 by -\dimen1
\dimen2=.5\dimen2
\advance\dimen2 by \dimen1
  \begin{aligned}
    Z_j(z) &= -{1 \over kz^k} + b\log z - {ji\pi b\over k}\\[2mm]
      & \setbox0\hbox{for $j=-k,\ldots,-1,0,1,\ldots,k$,}
        \rlap{\kern\dimen2\kern-\wd0\rlap{\unhbox0}}
  \end{aligned}
\end{equation}
where $\log z$ is determined by $\arg z\in(-\pi,\pi)$
for $-k < j < k$, and for $Z_k$ (resp.\null{} $Z_{-k}$),
$\arg(\cdot)$ will be the continuation in
$(0,2\pi)$ (resp.\null{} in $(-2\pi,0)$).
In particular, we see that $Z_k = Z_{-k}$, and that
\emph{both $Z_0$ and $Z_k$ commute with the complex
conjugation.} 

\begin{figure}[htb]
  \includegraphics{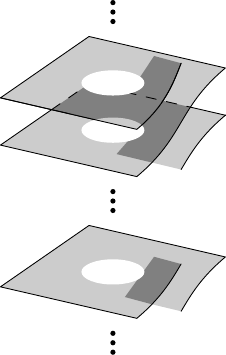}
  \caption{The Riemann surface of the time
  coordinate $Z$. The hole in the middle correspond
  to the image of $\Cp\setminus D(0,r)$ in the $z$-coordinate, while
  a neighbourhood of $z=0$ is sent to a neighbourhood
  of infinity.
  A curve going $k$ times around
  the hole in the $Z$-coordinate will turn
  one time around $\infty$ in the $z$-coordinate.}
  \label{fig:stationnement}
\end{figure}

Now we define the sectors in the $z$-space (see Figure~\ref{fig:sec S_0}).
On the Riemann surface of $Z_j$,
we write $G_j$ for the expression of $g:= f\circ f$ 
in the $Z_j$-coordinate. Let $z_j^\ast = \delta e^{ij\pi\over k}$
for $-k\leq j \leq k$ and some small enough $\delta>0$. Let $Z_j^\ast$ be
the image of $Z_j(z_j^\ast)$. We consider a vertical 
line $\ell_j$ passing through $Z_j^\ast$ and its image $G_j(\ell_j)$. Let $B_j$ be
the domain bounded by $\ell_j$ and $G_j(\ell_j)$ and
containing $\ell_j$ and $G_j(\ell_j)$.
The sector in the $Z_j$-coordinate is then obtained by
$$
  U_j = \left\{ Z_j\ |\ \exists n\in \Z,\, 
          G_j^{\circ n}(Z_j) \in B_j\right\}
$$
for $-k\leq j \leq k$ (see Figure~\ref{fig:U_j}). We see that $U_{-k} = U_k$,
since $Z_k = Z_{-k}$.
The sector $S_j$ in the $z$-coordinate is $Z_j\inv(U_j)$
(see Figure~\ref{fig:sec S_0}). 
These sectors are sometimes called \emph{Fatou petals}.
They are described in great details in \cite{iteration},
although the authors only consider \emph{attractive} petals. Note
that there are $2k$ petals, with half of them being
\emph{repulsive} (see Figure~\ref{fig:petal}).
Also, $S_k$ and $S_{-k}$ are the same petal.

\begin{figure}[htbp]
  \centering
  \includegraphics{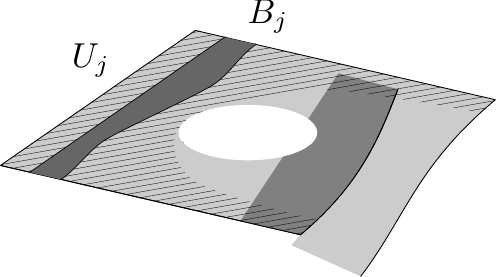}
  \caption{A chart $U_j$ on the Riemann surface with
  the vertical strip $B_j$}
  \label{fig:U_j}
\end{figure}

\begin{rem}\label{rem:Zj}
  Note that
  \begin{align*}
    2i\pi b\, & 
      = \int_{\partial D(0,\delta)} {1 + bz^k \over z^{k+1}}{\rm d} z
        = \sum_{j}\int_{Z_j(\gamma_j)}{\rm d} Z_j 
      = \sum_{j=-k+1}^{k} \Bigl(Z_{j}(z_{j+1}) - Z_{j}(z_{j})\Bigr)
  \end{align*}
  where $\gamma_j$ is an arc of the circle
  $\del D(0,\delta)$ in $S_j$, with endpoints $z_{j+1}$ and $z_{j}$,
  where $z_j = \delta e^{i(2j-1)\pi\over 2k}$. The $Z_j$ defined 
  as in~\eqref{eq:Zj} satisfy this condition.
\end{rem}

\begin{figure}[ptbh]
  \includegraphics{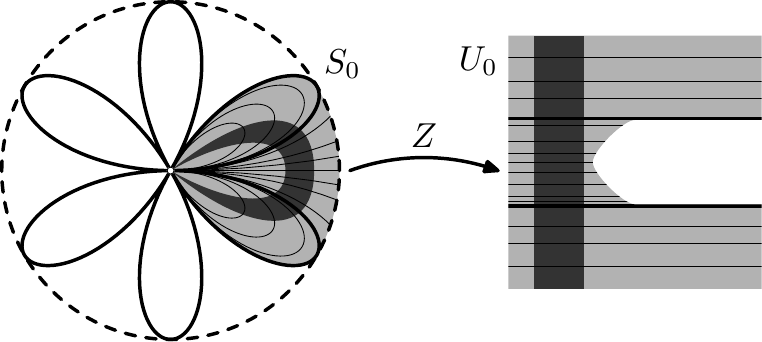}
  \caption{The particular case of $\zpoint = z^4$.
  On the right, the sector $U_0$ in
  the $Z_0$-coordinate, obtained from a strip (in dark gray). 
  On the left, $S_0 = Z\inv(U_0)$ the sector in $z$,
  with the preimage of the strip (in dark gray).}
  \label{fig:sec S_0}
\end{figure}

\begin{figure}[ptbh]
  \includegraphics{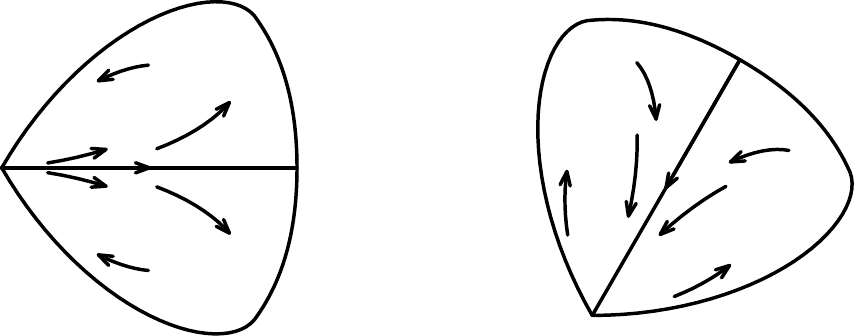}
  \caption{Petals for the holomorphic map $f\circ f$. 
  Dynamics inside a repulsive petal on the left.
  Dynamics inside an attracting petal on the right.}
  \label{fig:petal}
\end{figure}

The sectors are ordered as in Figure~\ref{fig:sec ordre}.
Note in particular that $S_0$ intersects the positive
real axis, and $S_k = S_{-k}$, the negative real axis.

\begin{figure}[ptbh]
  \includegraphics{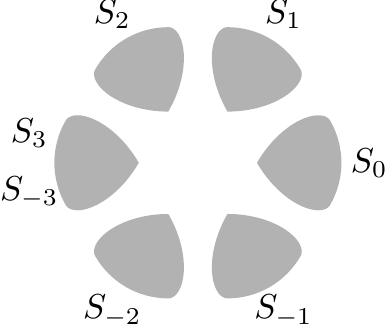}
  \caption{Ordering of the sectors for $k=3$.}
  \label{fig:sec ordre}
\end{figure}

\begin{deff} The \emph{time coordinate} is the Riemann
surface obtained from the disjoint union of the $U_j$,
glued together by the transition functions: the
charts are the $U_j\hookrightarrow \Cp$, with the
diffeomorphism $Z_j\colon S_j \to U_j$ given by
$$
  Z_j(z) = -{1\over kz^k} + b\log z - {ji\pi b\over k},
$$ 
and the transition functions are 
$Z_{j}\circ Z_{j-1}\inv = T_{-i\pi b\over k}$
for $-k < j \leq k$ where the composition is defined.
\end{deff}

The time coordinate is conformally equivalent to
a punctured disk of the origin.

Now we define the complex conjugation on the
time coordinate. Note that on a subdomain
$S_0' \subseteq S_0$ such that $\sigma(S_0') = S_0'$,
we have $Z_0(\zbar) = \overline{Z_0(z)}$. The complex
conjugation on the time coordinate is then obtained
by analytic continuation on the other charts $U_j$.

\begin{prop}[Complex conjugation]
  For $z \in S_j$, let $\ell$ be such that $\sigma(z) = \zbar \in S_\ell$.
  We define the \emph{complex conjugation} $\Sigma$ on 
  the time coordinate in the charts by 
  $$
    \Sigma_{\ell,j}\circ Z_j(z) = Z_{\ell}\circ \sigma(z).
  $$
  Then $\Sigma$ is well-defined and $\Sigma\circ \Sigma = id$.
\end{prop}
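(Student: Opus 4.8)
The plan is to reduce the statement to the explicit formula~\eqref{eq:Zj} for the charts together with the single fact that $b$ is real. First I would identify the index $\ell$. When $b$ is real the vector field~\eqref{eq:champ} is invariant under $\sigma$ (Proposition~\ref{prop:prop v}), so the whole normal-form construction — the strips $B_j$, the sectors, and hence the covering $\{S_j\}$ — is symmetric about the real axis; thus $\sigma$ interchanges $S_j$ and $S_{-j}$, fixing the two distinguished sectors $S_0$ (meeting the positive real axis) and $S_k=S_{-k}$ (meeting the negative real axis). This forces $\ell=-j$ and fixes the source and target of each $\Sigma_{\ell,j}\colon U_j\to U_{-j}$.

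Next I would compute the chart expression directly. For $-k<j<k$ every $Z_j$ uses the principal branch of $\log$, so $Z_j = Z_0 - {ji\pi b\over k}$; combining $Z_0(\zbar)=\overline{Z_0(z)}$ (the stated commutation of $Z_0$ with conjugation) with $b\in\R$ yields
$$
  Z_{-j}(\zbar) = \overline{Z_0(z)} + {ji\pi b\over k} = \overline{Z_j(z)},
$$
so that $\Sigma_{-j,j}$ is simply $W\mapsto\overline W$. For the two distinguished charts $j=0$ and $j=k$ this identity is exactly their stated commutation with conjugation, the only delicate point being the branch of $\log$ across the negative real axis, handled through $Z_k=Z_{-k}$.

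It remains to verify that these chart rules glue, i.e.\ that $\Sigma$ is well-defined on the Riemann surface, and this is the heart of the matter. If $z\in S_j\cap S_{j+1}$ then $W_j=Z_j(z)$ and $W_{j+1}=Z_{j+1}(z)=W_j-{i\pi b\over k}$ represent one point, and I must check that their images $\overline{W_j}\in U_{-j}$ and $\overline{W_{j+1}}\in U_{-j-1}$ represent one point as well. This is precisely where reality of $b$ enters: the transition on the conjugate side is $Z_{-j}\circ Z_{-j-1}\inv=T_{-i\pi b\over k}$, while $\overline{W_{j+1}}=\overline{W_j}+{i\pi b\over k}$ since $\overline{-{i\pi b\over k}}={i\pi b\over k}$, so the two gluing conditions coincide. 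I expect this overlap compatibility — especially the wrap-around through the identified chart $Z_k=Z_{-k}$ and its branch choices — to be the main obstacle; everything else is formal. Finally $\Sigma\circ\Sigma=id$ is immediate, since applying $W\mapsto\overline W$ from $U_j$ to $U_{-j}$ and once more from $U_{-j}$ back to $U_j$ returns $\overline{\overline W}=W$.
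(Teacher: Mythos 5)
There is a genuine gap at your first step, and it propagates through the rest of the argument. You justify $\ell=-j$ by claiming that the covering $\{S_j\}$ is symmetric about the real axis because the vector field~\eqref{eq:champ} is $\sigma$-invariant when $b\in\R$ (Proposition~\ref{prop:prop v}). But the sectors are not built from the vector field alone: $U_j$ is the saturation under $G_j$ of the strip $B_j$ bounded by $\ell_j$ and $G_j(\ell_j)$, where $G_j$ is the expression in the chart $Z_j$ of the \emph{actual} germ $g=f\circ f$, not of the normal form. In general $g$ does not commute with $\sigma$ (that would force the power series of $g$ to have real coefficients, which fails for a generic prenormalized $f$; compare Theorems~\ref{theo:f reel} and~\ref{theo:g reel}). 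Hence $\sigma(S_j)\neq S_{-j}$ in general: a point $z\in S_j$ near the edge of its petal can have $\zbar$ lying only in $S_{-j+1}$ or $S_{-j-1}$. So $\ell=-j$ is not forced, your chart-wise map $W\mapsto \overline W$ from $U_j$ to $U_{-j}$ is not defined on all of $U_j$ (its value $\overline{Z_j(z)}=Z_{-j}(\zbar)$ need not lie in $U_{-j}=Z_{-j}(S_{-j})$ when $\zbar\notin S_{-j}$), and your well-definedness argument omits precisely the case the paper's proof checks in addition to the source-side overlap: compatibility when the image $\Sigma(Z_j)$ lies in an overlap $U_\ell\cap U_{\ell\pm1}$ with $\ell\neq-j$.

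The computations you do carry out are correct and are the same ones the paper relies on: the identity $Z_{-j}(\zbar)=\overline{Z_j(z)}$, which needs $b\in\R$, and the source-side gluing check, which amounts to the fact that conjugation intertwines the transition $T_{-i\pi b/k}$ with $T_{i\pi b/k}$. But as written they establish the chart formula only on $\sigma$-symmetric subdomains; this is exactly why the paper states $\Sigma_{-j,j}(Z_j)=\overline{Z_j}$ only for a subdomain $S_j'\subset S_j$ with $\sigma(S_j')\subset S_{-j}$. The repair is not difficult, and it is worth noting that well-definedness is in fact nearly automatic from the proposition's own chart-free formula: the right-hand side $Z_\ell\circ\sigma(z)$ does not involve $j$ at all, and for two admissible targets $\ell,\ell'$ the points $Z_\ell(\zbar)$ and $Z_{\ell'}(\zbar)$ are identified by the very definition of the gluing of the time coordinate. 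Alternatively, keep your explicit description but allow the target chart to be $U_{-j\pm1}$, where the expression of $\Sigma$ becomes $W\mapsto\overline W$ composed with the transition translation; the verification is then the same one-line computation using $b\in\R$.
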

\begin{proof}
The proof consists of showing that 
$\Sigma$ is compatible on both charts when $Z_j \in U_j \cap U_{j+1}$ or when
$\Sigma(Z_j) \in U_\ell\cap U_{\ell\pm 1}$. It is a simple computation.
Note that for a subdomain $S_j'\subset S_j$ such that
$\sigma(S_j')\subset S_{-j}$, then in the charts,
we have $\Sigma_{-j,j}(Z_j) = \overline{Z_j}$.
\end{proof}

This allows us to talk about the normal form
$\sigma\circ \vt$ in the time coordinate. It
is the antiholomorphic map $\STt$.

\subsection{Fatou Coordinates} 
Let us call the petals of the normal form $S_j^v$.
The orbits of the normal form $\sigma\circ \vt$
jump from $S_j^v$ to $S_{-j}^v$.
This means that the dynamics of those two
petals is no longer independent, unlike the
holomorphic case. See Figure~\ref{fig:orbit f}.

In its prenormalized form $f$ is close to
$\sigma\circ\vt$ in the sense that 
$|f - \sigma\circ \vt| = o(|z|^{2k+1})$.
In the following lemma, we prove that 
it is also true that $F$ and $\STt$
are close in the time coordinate.

\begin{figure}[htb]
  \includegraphics{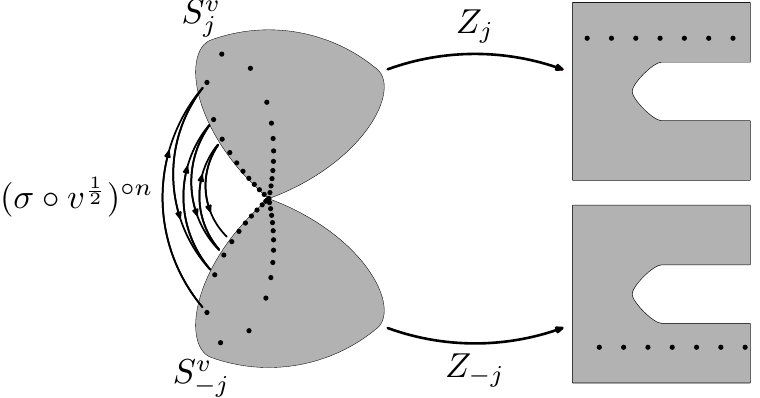}
  \caption{On the left, an orbit of $\sigma\circ \vt$ in the $z$ coordinate.
    The orbit jumps between the sectors $S_j^v$ and $S_{-j}^v$
    of $\sigma\circ \vt$.
    On the right, the same orbit is represented in the 
    time coordinate; it is the orbit of $\Sigma \circ \Tt$.}
  \label{fig:orbit f}
\end{figure}

\begin{lem}\label{lem:F-STt}
  Let $f$ be in its prenormalized form~\eqref{eq:prenormal} and
  let $F$ (resp.\null{} $\STt$) be the expressions of
  $f$ (resp.{} $\sigma\circ \vt$) in the time coordinate.
  On each chart $U_j$, we have
  $|F - \STt| = O(|Z|^{-1})$.
\end{lem}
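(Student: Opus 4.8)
The plan is to transport the elementary estimate coming from prenormalization into the time coordinate, where the change of variable $Z\sim z^{-k}$ converts closeness of order $|z|^{2k+1}$ into closeness of order $|Z|\inv$. Since $f$ is prenormalized as in~\eqref{eq:prenormal}, it agrees with $\sigma\circ\vt$ up to and including the $\zbar^{2k+1}$ term, so that
$$
  \epsilon(z) := f(z) - \sigma\circ\vt(z) = o(|z|^{2k+1})
$$
uniformly on a punctured neighbourhood of the origin. Fix a chart $U_j$ and, for $Z\in U_j$, let $z = Z_j\inv(Z)\in S_j$ be the corresponding point in the $z$-plane. Because $f$ and $\sigma\circ\vt$ agree to high order, for $z$ small the images $w := f(z)$ and $w_0 := \sigma\circ\vt(z)$ lie in one and the same petal $S_\ell$, and by definition the expressions of $f$ and of $\sigma\circ\vt$ in the time coordinate are $F(Z) = Z_\ell(w)$ and $\STt(Z) = Z_\ell(w_0)$. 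Hence on $U_j$ we have $F(Z) - \STt(Z) = Z_\ell(w) - Z_\ell(w_0)$.

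I then estimate this difference directly from the explicit form~\eqref{eq:Zj} of $Z_\ell$. Writing $w = w_0(1+\eta)$ with $\eta := \epsilon(z)/w_0$, the expansion $w_0 = \zbar + O(|z|^{k+1})$ gives $|w_0|\sim|z|$, whence $|\eta| = o(|z|^{2k+1})/|z| = o(|z|^{2k})$; in particular $\eta$ is small, so $\log(1+\eta)$ is taken on the principal branch and no branch ambiguity arises. Expanding,
$$
  Z_\ell(w) - Z_\ell(w_0)
    = -{1\over k}\Bigl({1\over w^k} - {1\over w_0^k}\Bigr)
      + b\bigl(\log w - \log w_0\bigr)
    = {\eta \over w_0^k}\bigl(1 + O(\eta)\bigr) + b\,\eta\bigl(1 + O(\eta)\bigr).
$$
The leading term obeys $|\eta/w_0^k| = o(|z|^{2k})/|z|^k = o(|z|^k)$, while the logarithmic contribution $|b\,\eta| = o(|z|^{2k})$ is of strictly lower order. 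Therefore $|F - \STt| = o(|z|^k)$, and since~\eqref{eq:coord temps} gives $Z \sim -1/(kz^k)$ as $z\to 0$, hence $|Z|\inv \sim k|z|^k$, the bound $o(|z|^k)$ becomes $o(|Z|\inv) \subseteq O(|Z|\inv)$, as claimed.

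The step that genuinely requires care—and which I expect to be the main obstacle—is uniformity: one must verify that the $o(\cdot)$ bound on $\epsilon$, and consequently on $F - \STt$, holds uniformly up to the boundary of the petal as $|Z|\to\infty$ (equivalently $z\to 0$), and that $w$ and $w_0$ indeed belong to a common chart $U_\ell$ so that the subtraction $Z_\ell(w)-Z_\ell(w_0)$ is meaningful in the first place. Both facts follow from $|\epsilon(z)|$ being negligible relative to $|w_0|\sim|z|$. Finally, the dependence of $\ell$ on $j$—reflecting that the antiholomorphic map jumps between conjugate petals—does not affect the estimate, since every chart $Z_\ell$ has the same leading behaviour $-1/(kz^k)$ near the origin.
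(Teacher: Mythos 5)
Your proof is correct, and its analytic core is the same as the paper's: both transport the prenormalization estimate $o(z^{2k+1})$ through the explicit chart formula $Z = -{1\over kz^k} + b\log z$, where Taylor expansion turns it into $o(z^k)$, which is $O(|Z|\inv)$ since $z^kZ \to -{1\over k}$. The one structural difference is the choice of the two points being compared. The paper sets $m = f\circ(\sigma\circ\vt)\inv = id + o(z^{2k+1})$ --- a holomorphic perturbation of the identity --- and expands $Z_j\circ m - Z_j$ inside the single source chart $U_j$; this sidesteps entirely the issue you flagged as the main obstacle, namely that $w = f(z)$ and $w_0 = \sigma\circ\vt(z)$ must land in a common target chart $U_\ell$ before the subtraction $Z_\ell(w) - Z_\ell(w_0)$ is meaningful. (The paper thereby estimates $F\circ(\STt)\inv - id$ rather than $F - \STt$, but the two statements are equivalent because $\STt$ displaces points by a bounded amount, so $O(|\STt(W)|\inv) = O(|W|\inv)$.) Your route confronts the chart-matching head-on, and your resolution is sound --- the images differ by $o(|z|^{2k+1})$, which is negligible against the geometry of the overlapping sectors --- but the paper's precomposition trick buys the same conclusion with a one-chart computation and no such verification.
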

\begin{proof} The proof is similar to that found in \cite{LectDiffEq}.
  Let $m(z) = f\circ (\sigma\circ \vt)\inv(z) = z + o(z^{2k+1})$.
  We see that 
  \begin{align*}
    Z_j\circ m(z) &= -{1\over kz^k}(1 + o(z^{2k})) 
        + b\log z + o(z^{2k}) - {ji\pi b\over k}\\[2\jot]
      &= Z_j(z) + o(z^k).
  \end{align*}
  Since $z^kZ_j(z) \to -{1\over k}$ when $z\to 0$, and
  because $Z_j$ is invertible and $|Z_j(z)|\to \infty$ 
  when $z\to 0$, it follows
  that $o(z^k)$ is $O(|Z_j|\inv)$ when $|Z_j|\to \infty$. Therefore, $F\circ (\STt)\inv(Z_j)
  = Z_j + O(|Z_j|\inv)$.
\end{proof}

We now present the existence of the Fatou
coordinates. Note that Hubbard and
Schleicher proved their existence in \cite{multiNotPath} 
(Lemma 2.3) in the codimension~1 case for a map with a parabolic
periodic orbit of odd period $n$. We recover their case by
considering $f^{\circ n}$.
This corresponds for us to a germ of
antiholomorphic parabolic diffeomorphism of codimension~1.
The proof in higher codimension is in the same 
spirit with an adaptation, since we need to work
with pairs of sectors $(U_j,U_{-j})$.

\begin{prop}\label{prop:coord Fatou}
  Let $F$ and $\Sigma$ be the expression of $f$ 
  and $\sigma$ in the time coordinates respectively.
  Recall that $U_j = Z_j(S_j)$. On each  $U_j$,
  there exists a holomorphic diffeomorphism
  $\Phi_{j}\colon U_{j} \to \Cp$ such that 
  \begin{equation}\label{eq:F Phi}
    \Phi_j \circ F \circ \Phi_{-j}\inv = \STt,
  \end{equation}
  whenever the composition is defined.

  Moreover, if $\widetilde \Phi_{j}$
  are other Fatou coordinates, then there exists
  $C_j\in \Cp$ for $j=1,\ldots,k-1$ and $C_0,C_k\in\R$ such that 
  $\Phi_j \circ {\widetilde \Phi_j}\inv = T_{C_j}$
  and $\Phi_{-j} \circ {\widetilde \Phi_{-j}}\inv = T_{\overline C_j}$
  for $j\geq 0$.
\end{prop}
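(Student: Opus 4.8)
The plan is to reduce everything to the holomorphic germ $g := f\circ f$, which is parabolic of codimension $k$, and for which the existence and uniqueness (up to an additive constant) of Fatou coordinates on each petal is classical (see \cite{LectDiffEq} or \cite{nonlinear}): on each chart $U_j$ there is a holomorphic diffeomorphism conjugating the expression $G$ of $g$ to $T_1$, unique up to postcomposition with a translation. By Lemma~\ref{lem:F-STt} (in its holomorphic avatar $G = T_1 + O(|Z|\inv)$) these are genuine diffeomorphisms onto half-plane-like domains. The whole point is then to fix the free constants so that the chosen coordinates \emph{simultaneously} conjugate $f$ to $\STt$. Since an orbit of $f$ jumps from $S_{-j}$ to $S_j$ and back, in the time coordinate $f$ is given by a pair of antiholomorphic maps $F_+\colon U_{-j}\to U_j$ and $F_-\colon U_j\to U_{-j}$ with $F_-\circ F_+ = G$ on $U_{-j}$ and $F_+\circ F_- = G$ on $U_j$; note that $\STt\circ\STt = T_1$ likewise, because $\sigma$ and $\vt$ commute.

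For existence I would first treat a pair $\{j,-j\}$ with $j\neq 0,k$. Choosing any Fatou coordinate $\Phi_{-j}$ of $g$ on $U_{-j}$, I simply \emph{define} $\Phi_j := \STt\circ\Phi_{-j}\circ F_+\inv$ on $U_j$. A bookkeeping of holomorphy shows $\Phi_j$ is holomorphic (the two antiholomorphic factors $\STt$ and $F_+\inv$ combine to an even count), and the identity $\Phi_j\circ F_+ = \STt\circ\Phi_{-j}$, i.e.\ \eqref{eq:F Phi}, holds by construction. The companion identity $\Phi_{-j}\circ F_- = \STt\circ\Phi_j$ is then automatic: using $F_- = G\circ F_+\inv$ and $\Phi_{-j}\circ G = T_1\circ\Phi_{-j}$ gives $\Phi_{-j}\circ F_- = T_1\circ\Phi_{-j}\circ F_+\inv = \STt\circ\STt\circ\Phi_{-j}\circ F_+\inv = \STt\circ\Phi_j$; in particular $\Phi_j$ is again a Fatou coordinate of $g$. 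The self-paired charts $j\in\{0,k\}$, where $U_{-j}=U_j$ and $\Sigma$ acts by $Z\mapsto\Zbar$, require a genuine normalization: any Fatou coordinate $\Phi_j^0$ of $g$ there yields an antiholomorphic $M := \Phi_j^0\circ F\circ(\Phi_j^0)\inv$ with $M\circ M = T_1$; since $M$ commutes with $T_1$ and is close to $\STt$ by Lemma~\ref{lem:F-STt}, it must be of the form $M(Z)=\Zbar + c$ with $\Re c = {1\over2}$, and replacing $\Phi_j^0$ by $T_d\circ\Phi_j^0$ turns $c$ into $c + 2i\Im d$, so a suitable real choice of $\Im d$ brings $M$ to $\STt$.

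For uniqueness, suppose $\Phi_j$ and $\wt\Phi_j$ both satisfy~\eqref{eq:F Phi}. The computation above shows each is a Fatou coordinate of $g$ on its chart, so the classical uniqueness forces $\Phi_j\circ\wt\Phi_j\inv = T_{C_j}$ for some $C_j\in\Cp$ on every chart. Substituting $\Phi_{\pm j}=T_{C_{\pm j}}\circ\wt\Phi_{\pm j}$ into~\eqref{eq:F Phi} and cancelling the common factor $\wt\Phi_{-j}$ (surjective onto an open set) yields the functional equation $T_{C_j}\circ\STt = \STt\circ T_{C_{-j}}$; in the charts where $\STt(Z)=\Zbar+{1\over2}$ this reads $\Zbar+{1\over2}+C_j = \Zbar+\overline{C_{-j}}+{1\over2}$, hence $C_{-j}=\overline{C_j}$. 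For $j=1,\dots,k-1$ this leaves $C_j\in\Cp$ free with $C_{-j}=\overline{C_j}$, while for the self-paired charts $j\in\{0,k\}$ one has $C_{-j}=C_j$, forcing $C_0,C_k\in\R$, exactly as claimed.

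The genuinely delicate points are geometric rather than algebraic: one must verify that $F_+$ and $F_-$ are honest diffeomorphisms between the stated domains and that the compositions defining $\Phi_j$ and the equation~\eqref{eq:F Phi} are valid on the overlaps ``whenever defined'', which leans on the precise shape of the sectors $U_j$ and on the chart formula $\Sigma_{-j,j}(Z_j)=\overline{Z_j}$ established earlier. The other subtle step is justifying, via a periodicity (Liouville-type) argument on the half-plane, that an antiholomorphic map commuting with $T_1$ and asymptotic to $\STt$ is exactly $Z\mapsto\Zbar+c$; this is what pins the reality of $C_0$ and $C_k$, and hence the count of $k$ rather than $2k$ analytic invariants.
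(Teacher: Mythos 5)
Your proof is correct, and it rests on the same two pillars as the paper's own argument: the classical existence and uniqueness of Fatou coordinates for the holomorphic germ $g=f\circ f$, and the periodicity-plus-boundedness (Fourier) argument, fed by Lemma~\ref{lem:F-STt}, showing that an antiholomorphic map commuting with $T_1$ and asymptotic to $\STt$ must be of the form $W\mapsto \overline W+c$. The genuine difference is \emph{where} that rigidity argument is deployed. The paper applies it on every chart: it sets $Q_j=\Phi_{-j}\circ F\circ\Phi_j\inv$ for arbitrary Fatou coordinates of $g$, concludes $Q_j(W)=\overline W+\overline{c_{j,0}}$ for all $j$, uses the pairing $Q_j\circ Q_{-j}=T_1$ to constrain the constants, and then adjusts all $2k$ coordinates by translations. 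You instead make equation~\eqref{eq:F Phi} hold \emph{by construction} on each pair $(U_j,U_{-j})$ with $j\neq0,k$, by defining $\Phi_j:=\STt\circ\Phi_{-j}\circ F_+\inv$ and deriving the companion identity from $\STt\circ\STt=T_1$; the Fourier argument is then confined to the two self-paired charts $U_0,U_k$, where it is genuinely unavoidable and is precisely what forces $\Re c=\frac{1}{2}$ and hence the reality of $C_0,C_k$. Your route is more economical (the analytic input appears only where the statement is rigid), and your uniqueness computation $C_{-j}=\overline{C_j}$ is more explicit than the paper's one-line appeal to preserving the constants $c_{j,0}=\frac{1}{2}$; the paper's uniform treatment, in exchange, yields coordinates manifestly defined on all of $U_j$ from the start and exhibits the symmetric normalization on every chart, which is convenient when the transition functions are normalized in Section~\ref{sec:module}. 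One small point you should make explicit: your $\Phi_j$ is a priori defined only on $F_+(U_{-j})$, not on all of $U_j$; since you verify that it conjugates $G$ to $T_1$ there, it extends to the whole chart by the functional equation (equivalently, by classical uniqueness it agrees with a translate of a genuine Fatou coordinate of $g$ on $U_j$), which closes that gap and keeps the rest of your argument intact.
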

\begin{proof}
  The proof makes use of the rigidity of the 
  conformal structure of the doubly punctured sphere
  $S^2\setminus\{0,\infty\}$, as in the
  proof of the uniqueness in the holomorphic case.

  Let $\Phi_j$ denote the Fatou coordinate
  of $f \circ f$ on $U_j$, that is
  $$
    \Phi_{j} \circ (F \circ F) \circ {(\Phi_{j})}\inv = T_1.
  $$
  (We know that it exists since $f \circ f$ is holomorphic and
  that it is unique up to left-composition with a translation.) 
  In the space of the Fatou coordinate $W_{j} = \Phi_{j}(Z_j)$,
  an orbit $\{ (f\circ f)^{\circ n}(z) \}$ corresponds to $\{ W_{j} + n \}$.
  Note also that $\Phi_j(Z_j) = Z_j + D_j + O(|Z_j|\inv)$,
  for some constant $D_j\in\Cp$ (see \cite{LectDiffEq}).

  We first note that each $\Phi_j(U_{j})$
  contains a vertical strip $B_j$ of width~1, by construction
  of the time coordinate $U_{j}$ and of the Fatou coordinate.
  We define 
  $$
    Q_{j} = \Phi_{- j} \circ F\circ {(\Phi_{j})}\inv
    \quad\rlap{\quad for $-k\leq j\leq k$.}
  $$
  Then we see that $Q_{j} \circ Q_{-j} = T_1$ since 
  $\Phi_{j}$ are Fatou coordinates of $F \circ F$.
  It follows that $Q_{j}$ commutes with $T_1$,
  since
  \begin{align*}
    T_1 \circ Q_{j}
      &= (Q_{j} \circ Q_{-j}) \circ Q_{j} \\
      &= Q_{j} \circ (Q_{-j} \circ Q_{j}) \\
      &= Q_{j} \circ T_1.
  \end{align*}
  Indeed, $Q_{j}$ represents $F$ in the Fatou coordinates.
  It is therefore natural that $Q_{j}$ commutes with
  $T_1$, which represents $F \circ F$ in the Fatou coordinates.

  Because $Q_j$ is the composition of an antiholomorphic germ
  by a holomorphic diffeomorphism,  $Q_j$ is antiholomorphic.
  In particular, $\Sigma\circ Q_j$ is holomorphic, and
  $\Sigma\circ Q_{j} - id$ is 
  $1$-periodic and holomorphic, so it has
  a Fourier expansion
  $$
    \Sigma\circ Q_{j}(W_{j}) - W_{j}
      = \sum_{n=-\infty}^\infty c_{n,j} e^{2i\pi n W_{j}}.
  $$
  Moreover, by lemma~\ref{lem:F-STt}, we have
  $\Sigma\circ Q_j(W) = W + M_j + O(|W|\inv)$, where
  $M_j\in\Cp$ is a constant.
  Therefore, $|\Sigma\circ Q_j - id|$ is bounded when
  $|W| \to \infty$, so we must have $c_{n,j} = 0$ 
  for $n\in\Z^\ast$.

  We conclude that $Q_{j}(W_j) = \overline W_j + \overline{c_{j,0}}$.
  Since $Q_j\circ Q_j = T_1$, it follows that $c_{j,0} = {1\over 2} + iy$.
  We then adjust all the $\Im c_{j,0}$ to 0 by choosing the appropriate 
  Fatou coordinates (i.e.\null{} composing them with a
  translation).

  The uniqueness comes from a combination of the uniqueness
  of the Fatou coordinates for the holomorphic $f \circ f$ 
  and having to preserve the constants $c_{j,0} = {1 \over 2}$.
\end{proof}

\section{Modulus of Analytic Classification}\label{sec:module}
If two antiholomorphic parabolic germs are analytically 
conjugate, then they have the same space of orbits. The 
space of orbits of an antiholomorphic parabolic germ
$f$ is a quotient of the set of orbits of the associated 
holomorphic parabolic germ $g = f \circ f$. Hence we start by describing 
the space of orbits of $g$; on a Fatou coordinate, it
is the quotient by $T_1$, which is a bi-infinite 
cylinder. We also need to identify some orbits represented 
in two different Fatou coordinates. This is done by means of 
the transition maps (the horn maps of Écalle).

We will describe the space of orbits of $f$ in 
Section~\ref{sec:espace des orbites} and classify the 
antiholomorphic germs in Section~\ref{sec:class theo}.
To do both of these, we will need the \emph{transition functions},
which will form an analytic invariant.

The transition functions we describe here are the
same as for the holomorphic case. We will introduce 
what we need here; all the details are found in \cite{LectDiffEq}
or \cite{nonlinear}.

In the time coordinate, if $U_j$ is a repelling (resp.\null{}
attractive) petal,
then $U_j$ and $U_{j+1}$ intersect on a domain
containing an upper half-plane (resp.\null{} a
lower half-plane), see Figure~\ref{fig:intersection}.
We can compare the Fatou coordinates $\Phi_j$ and $\Phi_{j+1}$
by looking at
$$\def\to^#1{\buildrel#1\over\longrightarrow}
\def\to^#1{\buildrel#1\over\longrightarrow}
  \displaylines{
  \Psi_j\colon V_j \to^{\Phi_j\inv} U_j \cap U_{j+1} \to^{\Phi_{j+1}} V_{j+1},
\cr\noalign{\penalty1000\noindent resp.\null{}}
  \Psi_j\colon V_{j+1} \to^{\Phi_{j+1}\inv} U_{j+1} \cap U_j \to^{\Phi_j} V_j,
  }
$$
where $V_j = \Phi_j(U_j)$ for all $j$.
This yields a diffeomorphism defined on a domain of
$V_{j}$ (resp.\null{} $V_{j+1}$) containing an upper-half 
plane (resp.\null{} lower half-plane) with its image
in $V_{j+1}$ (resp.\null{} $V_j$) also containing 
some upper-half plane (resp.\null{} lower-half plane).

\begin{figure}[htbp]
  \centering
  \includegraphics{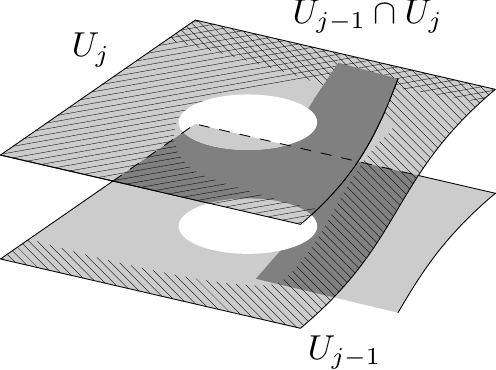}
  \caption{Charts $U_{j-1}$ and $U_j$ on the
  time coordinate. They intersect on a 
  region containing (in this case) an
  upper half-plane.}
  \label{fig:intersection}
\end{figure}

Notice the order of the composition for $\Psi_j$:
we choose the convention that
these functions will go from a repulsive petal
to an attractive petal. Figure~\ref{fig:ordre psi}
is an illustration of the direction of the arrows
in the $z$-coordinate for $k=3$, where $\xi_j$ is
the expression of $\Psi_j$ in the $z$-coordinate.

\begin{deff}\label{def:Psi}
  Let $\Phi_j$ be a Fatou coordinate of $f$ on $U_j$. 
  The \emph{transition functions} (equivalent to the 
  Écalle horn maps) of $f$ are the $2k$ functions $\Psi_j$ 
  for $j=1,\ldots,k$ and $j=-1,\ldots,-k$ obtained by
  \begin{equation}
    \Psi_j = \begin{cases}
      \Phi_{j} \circ \Phi_{j-\sgn(j)}\inv, & \hbox{for $j$ odd};\cr\noalign{\vskip2\jot}
      \Phi_{j-\sgn(j)} \circ \Phi_{j}\inv,&\hbox{for $j$ even};
    \end{cases}
  \end{equation}
  where the composition is defined.
  Here, $\sgn(j)$ is the sign of $j$. 
\end{deff}

\begin{figure}[tbhp]
  \centering
  \includegraphics{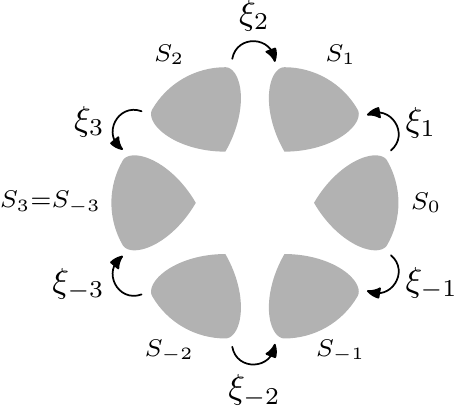}
  \caption{Direction of the transition functions $\{\xi_j\}_j$
  represented in the $z$-coordinates.}
  \label{fig:ordre psi}
\end{figure}

By the uniqueness of Proposition~\ref{prop:coord Fatou}, 
we may change $\Phi_{\pm j}$ by $T_{C_j} \circ \Phi_j$ and 
$T_{\overline C_j} \circ \Phi_{-j}$ for some $C_j\in \Cp$
for $j=1,\ldots,k-1$, or $\Phi_{j}$ by $T_{R_j} \circ \Phi_j$ 
for some $R_j\in \R$ for $j=0,k$.
This will yield another 
set of $2k$ transition functions. We will identify
together these different possible choices of transition
functions at the end of this section.

The following proposition is the first step 
towards the geometric invariant. The transition
functions allow to describe the space of orbits
of $F$ and $F\circ F$.

\begin{prop}\label{prop:Psi commute}
  Let $(\Psi_{-k},\ldots,\Psi_{-1},\Psi_1,\ldots,\Psi_k)$
  be transition functions of $f$. They satisfy
  the equation
  \begin{equation}\label{eq:Psi commute}
    \Sigma \circ \Tt \circ \Psi_j
    = \Psi_{-j} \circ \Sigma \circ \Tt.
  \end{equation}
  In particular, they are transition functions
  of $f\circ f$ and satisfy
  \begin{equation}\label{eq:Psi commute T1}
    T_1 \circ \Psi_j = \Psi_j \circ T_1.
  \end{equation}
\end{prop}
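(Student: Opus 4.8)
The plan is to derive everything from the single structural identity \eqref{eq:F Phi}, namely $\Phi_\ell\circ F\circ\Phi_{-\ell}\inv=\STt$, which holds for \emph{every} index $\ell$ (both signs) wherever the compositions are defined. I first record it in the two rearranged forms $\STt\circ\Phi_\ell=\Phi_{-\ell}\circ F$ (read \eqref{eq:F Phi} at index $-\ell$) and $F\circ\Phi_{-\ell}\inv=\Phi_\ell\inv\circ\STt$ (read \eqref{eq:F Phi} at index $\ell$). Each of these lets me push a copy of $\STt$ across a Fatou coordinate at the cost of flipping the sign of its index and producing one factor of $F$.

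With these in hand I verify \eqref{eq:Psi commute} directly, following the four cases for the parity and sign of $j$ in Definition~\ref{def:Psi}; since the cases are mutually symmetric, I only spell out $j>0$ odd. There $\Psi_j=\Phi_j\circ\Phi_{j-1}\inv$ and $\Psi_{-j}=\Phi_{-j}\circ\Phi_{-(j-1)}\inv$. The first form at index $\ell=j$ gives $\STt\circ\Psi_j=(\STt\circ\Phi_j)\circ\Phi_{j-1}\inv=\Phi_{-j}\circ F\circ\Phi_{j-1}\inv$. The second form at index $\ell=-(j-1)$ gives $\Psi_{-j}\circ\STt=\Phi_{-j}\circ\bigl(\Phi_{-(j-1)}\inv\circ\STt\bigr)=\Phi_{-j}\circ F\circ\Phi_{j-1}\inv$, the same expression, so \eqref{eq:Psi commute} holds. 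The even cases are identical after exchanging the roles of the two Fatou coordinates (the outer and inner indices swap, as dictated by Definition~\ref{def:Psi}), and the negative-$j$ cases follow by the same computation with $j$ replaced by $-j$. Throughout, each equality is understood on the overlap domain where both sides are defined.

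For the ``in particular'' assertion I apply \eqref{eq:Psi commute} twice: since $\STt\circ\Psi_{-j}=\Psi_j\circ\STt$ as well (that is \eqref{eq:Psi commute} at index $-j$), I obtain $\STt\circ\STt\circ\Psi_j=\STt\circ\Psi_{-j}\circ\STt=\Psi_j\circ\STt\circ\STt$, so $\Psi_j$ commutes with $\STt\circ\STt$. It then remains to identify $\STt\circ\STt$ with $T_1$: in a chart where $\Sigma$ is the conjugation $Z\mapsto\overline Z$ one computes $\STt(Z)=\overline Z+\tfrac12$ and hence $\STt\circ\STt(Z)=Z+1$; equivalently, $\STt$ represents $\sigma\circ\vt$ in the time coordinate and $(\sigma\circ\vt)\circ(\sigma\circ\vt)=v^1$ (using that $\sigma$ and $\vt$ commute because $b$ is real), which is exactly $T_1$. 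This gives \eqref{eq:Psi commute T1}. Finally, because Proposition~\ref{prop:coord Fatou} furnishes the $\Phi_j$ as simultaneous Fatou coordinates of the holomorphic germ $f\circ f$, the $\Psi_j$ are built from them by the very recipe defining the horn maps of $f\circ f$, so they are indeed transition functions of $f\circ f$.

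The computation itself is elementary, so the main obstacle is purely organizational: keeping straight which signed index of \eqref{eq:F Phi} to invoke in each of the parity/sign cases, and checking that the overlapping sectors $U_\ell$ and $U_{\ell\pm1}$ match up so that the chained compositions are simultaneously defined. This is the standard bookkeeping of the overlapping-petal picture and contains no genuine difficulty once the two rearranged forms above are fixed.
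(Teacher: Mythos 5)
Your proof is correct and follows exactly the route the paper intends: the paper's own proof simply states that the result ``follows from the definition of $\Psi_j$ and~\eqref{eq:F Phi},'' and your argument is precisely that computation written out in full (including the correct index bookkeeping in Definition~\ref{def:Psi} and the identification $\STt\circ\STt = T_1$). No gaps; this is the same approach, just made explicit.
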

\begin{proof}
  The proof is identical to the holomorphic case; it follows from
  the definition of $\Psi_j$ and~\eqref{eq:F Phi}.
\end{proof}

Equation~\eqref{eq:Psi commute} says that the orbits
of $\Sigma \circ \Tt$ in one Fatou coordinate
are sent by the $\Psi_j$ on the orbits of $\Sigma\circ \Tt$ in
another Fatou coordinate. In those coordinates, the orbits of
$\Sigma \circ \Tt$ correspond to those of $f$. Therefore,
the transition functions allow to identify the
same orbits of $f$ in different coordinates.

We can rewrite Equation~\eqref{eq:Psi commute} as
\begin{equation}\label{eq:Psi moins}
  \Psi_{-j} = \Sigma\circ \Tt \circ \Psi_j \circ \Sigma \circ \mTt,
\end{equation}
so that $\Psi_{-j}$ is determined by $\Psi_j$.
Thus we only need half of the transition functions of $f$ to
determine all of them. For the rest of the
paper, we will work with $\Psi_1, \ldots,\Psi_k$,
knowing that $\Psi_{-1},\ldots,\Psi_{-k}$ are obtained
from Equation~\eqref{eq:Psi moins}.

The transition functions in the holomorphic case
are well known and their properties are described in
\cite{nonlinear} by Ilyashenko. Because the transition
functions of $f$ are also those of $f\circ f$, 
they share the properties which we describe now.

Each $\Psi_j$ satisfies Equation~\eqref{eq:Psi commute T1};
it follows that $\Psi_j - id$ is 1-periodic and has a
Fourier expansion
\begin{equation}\label{eq:Psi fourier}
  \begin{aligned}
    \Psi_j(W) - W &= c_j + \sum_{n=1}^\infty c_{n,j} e^{2i\pi nW}
      \quad\ \rlap{for $j > 0$ odd;}\\
    \Psi_j(W) - W &= c_j + \sum_{n=-1}^{-\infty} c_{n,j} e^{2i\pi nW}
      \quad\ \rlap{for $j > 0$ even.} 
  \end{aligned}
\end{equation}
In particular, we see that $|\Psi_j -id - c_j|$ is
exponentially decreasing when $\Im W \to \infty$
and $j$ is odd (resp.\null{} $\Im W \to -\infty$ and $j$ is even).

Since the Fatou coordinates are not unique, we may change them and
obtain new transition functions. This will change the constants
$c_j$ and $c_{n,j}$, but the following alternating sum will always be preserved
\begin{equation}\label{eq:b}
  \begin{aligned}
    (-1)^{k-1} c_{-k} &+ (-1)^{k-2} c_{-k+1} \pm \cdots - c_{-2} + c_{-1}\\
      &{} - c_1 + c_2 - \cdots + (-1)^{k-1} c_{k-1} + (-1)^{k} c_{k}
        = 2i\pi b.
  \end{aligned}
\end{equation}

We successively change $\Phi_{1}$
by $T_{-c_1 - {i\pi b\over k}}\circ \Phi_1$,
$\Phi_2$ by $T_{-c_1+c_2-{2i\pi b\over k}}$,
$\Phi_3$ by $T_{-c_1+c_2-c_3-{3i\pi b\over k}}\circ \Phi_3$,
and so on. The constant terms of the new
transition functions will satisfy,
\begin{equation}\label{eq:cj}
  c_j = \begin{cases}
    \displaystyle-{i\pi b \over k},
      & \hbox{for $j>0$ odd;}\\[3\jot]
    \displaystyle\phantom{-}{i\pi b \over k},
      & \hbox{for $j>0$ even.}
  \end{cases}
\end{equation}
for $j=1,2,\ldots,k$.
Note that, together with Equation~\eqref{eq:Psi moins}, 
the constant terms $c_j$ for $j=-k,\ldots,-1$
are given by $c_j = \overline{c_{-j}}$.

\begin{deff}[Normalized transition functions]\label{def:normal}
  The transition functions $(\Psi_1,\ldots,\Psi_k)$ are
  said to be \emph{normalized} if the constant terms 
  $c_j$ of Equation~\eqref{eq:Psi fourier} 
  satisfy~\eqref{eq:cj} for $j=1,2,\ldots,k$.
\end{deff}

Even when normalized, the transition functions are not
uniquely determined. There is still a remaining degree of freedom:
we may change the source and target space of each $\Psi_j$
by the same translation $T_C$, with $C \in\R$.

In the case of even codimension, if $f$ is prenormalized, then
$f_1(z)=-f(-z)$ is also prenormalized, and $f$ and $f_1$
are conjugate. Hence, we will need to identify their moduli.

\begin{deff}[Modulus of classification]\label{def:module}
  We define the equivalence relation $\sim$ on normalized
  transition functions of the form~\eqref{eq:Psi fourier} by
  \begin{equation}\label{eq:relation}
    (\Psi_1,\ldots,\Psi_k)\sim(\Psi_1',\ldots,\Psi_k')
    \Leftrightarrow \exists C\in \R, \Psi_j = T_C \circ \Psi_j' \circ T_{-C}.
  \end{equation}
  The \emph{modulus of classification} of $f$ 
  of codimension $k$ (and of positive type when $k$ is even)
  (see Def.\null{}~\ref{def:type +}) is defined by 
  the triple $(k,b,[\Psi_1,\ldots,\Psi_k])$, where
  $k$ is the codimension (Def.\null{}~\ref{def:codim}), 
  $b$ is the formal invariant (Def.\null{}~\ref{def:formel}), and
  $[\Psi_1,\ldots,\Psi_k]$ is the equivalence class
  of normalized transition functions of $f$ 
  (Defs.\null{}~\ref{def:Psi} and~\ref{def:normal}),
  where the equivalence class is defined according to
  the following two cases:
  \begin{enumerate}
    \item If $k$ is odd, then $[\Psi_1,\ldots,\Psi_k]$ is the
      equivalence class under the Relation~\eqref{eq:relation};
    \item If $k$ is even, then $[\Psi_1,\ldots,\Psi_k]$ is
      the equivalence under the Relation~\eqref{eq:relation}
      and the additional relation
      \begin{equation}\label{eq:rel k pair}
        \hskip\parindent (\Psi_1,\ldots,\Psi_k)\cong (\Psi_1',\ldots,\Psi_k')
	\iff \Psi_j' = \Sigma\circ \Tt \circ \Psi_{k-j+1} \circ \Sigma\circ \mTt.
      \end{equation}
  \end{enumerate}
  This equivalence class is called the \emph{analytic invariant}.
\end{deff}

\subsection{Remarks on the Écalle-Voronin Modulus}\label{rem:module Ecalle}
  In the holomorphic case, the modulus of classification
  is known as the \emph{Écalle-Voronin modulus}
  (see \cite{nonlinear} and \cite{LectDiffEq}). For
  any holomorphic parabolic germ $g$ (not necessarily
  of the form $g = f\circ f$), we can obtain its
  Écalle-Voronin modulus the same way as described
  above, but without equation~\eqref{eq:Psi commute},
  so that the $2k$ transition functions are needed.
  There are $2k$ degrees of freedom, so we normalize
  the transition functions by choosing the constant 
  terms as in~\eqref{eq:cj} and with $c_{-j}=-c_j$; 
  the remaining degree of 
  freedom is a translation in every Fatou coordinate
  by a constant $C\in\Cp$.
  Therefore, we quotient by the equivalence relation
  \begin{equation}\label{eq:relation g}
    \begin{aligned}
      (\Psi_{-k},\ldots,\Psi_{-1},\Psi_1,\ldots,\Psi_k)
        &\sim_\Cp(\Psi_{-k}',\ldots,\Psi_{-1}',\Psi_1',\ldots,\Psi_k')\\
      &\ \Leftrightarrow \exists C\in \Cp, \Psi_j = T_C \circ \Psi_j' \circ T_{-C}\\
      &\quad\qquad\hbox{\footnotesize for $j\in\{-k,\ldots,-1\}\cup\{1,\ldots,k\}$}.
    \end{aligned}
  \end{equation}
  We also quotient by the action of the rotations of order $k$.
  If we note the indices $\Psi_{-j}=\Psi_{2k-j+1}$ for $j=1,\ldots, k$,
  then we have the identification 
  \begin{equation}\label{eq:relation g rot}
    \begin{aligned}
      (\Psi_{k+1},\ldots,\Psi_{2k},&\Psi_1,\ldots,\Psi_k)\\
        &\sim (\Psi_{k+1+2m}',\ldots,\Psi_{2k+2m}',\Psi_{1+2m}',\ldots,\Psi_{k+2m}')\\
        &\qquad\qquad\rlap{\kern7.2pc for $m=0,\ldots,k-1$,}\kern-1pc
    \end{aligned}
  \end{equation}
  where indices are mod $2k$.
  We will note the equivalence class of both of these identifications
  by $[\Psi_{-k},\ldots,\Psi_{-1},\penalty0\Psi_1,\ldots,\Psi_k]$. The 
  modulus of $g$ is then $(k,b,[\Psi_{-k},\ldots,\Psi_{-1},\Psi_1,\ldots,\Psi_k])$.

  We describe the link between the Écalle-Voronin modulus of
  $g$ and the modulus of classification of an antiholomorphic
  parabolic germ $f$ of positive type such that $g = f\circ f$, when such a
  $f$ exists. For $k$ odd (resp.\null{} $k$ even), 
  the symmetry axis of $f$ appears along one of the $k$ (resp.\null{}~%
  $\smash{k\over 2}$) ``symmetry axes'' of $g$ of the form $e^{2i\ell\pi\over k}\R$,
  $\ell = 0,\ldots,k-1$. Therefore, we associate to the analytic invariant of $g$
  the regular $k$-gon with the $k$ symmetry axes
  $e^{i\ell \pi\over k}\R$ (see Figure~\ref{fig:polygone module})
  in the following way.
  Divide the $k$-gon by its $k$ symmetry axes to 
  produce $2k$ sectors in the $k$-gon, then starting 
  with the sector above the horizontal line on the
  right side, we identify this sector with $\Psi_1$ and
  going anti-clockwise, we associate $\Psi_2,\ldots,
  \Psi_k$, $\Psi_{-k},\ldots,\Psi_{-1}$ to the subsequent sectors, as in 
  Figure~\ref{fig:polygone module}. The dihedral group $D_{2k}$
  acts on the sectors of the $k$-gon. The action is defined for
  an element $u\in D_{2k}$ by mapping a sector to its
  image by the linear application represented by $u$ (a rotation
  or a reflection).
  
  \begin{deff}\label{deff:action s}
    Let $u\in D_{2k}$ and let
    $\Delta_j$ be the sector of the $k$-gon associated with $\Psi_j$.
    The action of $u$  on the sectors of the regular $k$-gon defines
    a permutation on $\{-k,\ldots,-1,1,\ldots,k\}$
    also noted $u$ by abuse of notation, defined so
    that $u\inv(\Delta_j) = \Delta_{u(j)}$ 
    (see Figure~\ref{fig:polygone module}).
  \end{deff}

  \begin{figure}[htbp]
    \centering
    \subfigure[{The permutation {$u$} for this reflection is
      {$\Psi_{u(1)} = \Psi_{-1}$}, {$\Psi_{u(2)} = \Psi_{-2}$},
      and so on.}]{
      \centering
      \includegraphics{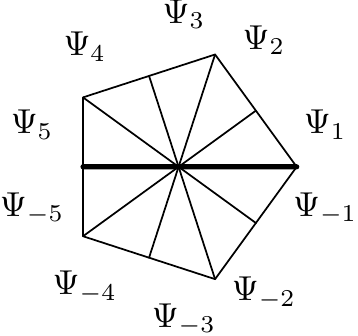}
    }
    \hfill
    \subfigure[{The permutation $u$ for this reflection is
      $\Psi_{u(1)} = \Psi_{2}$, $\Psi_{u(3)} = \Psi_{-1}$,
      and so on.}]{
      \centering
      \includegraphics{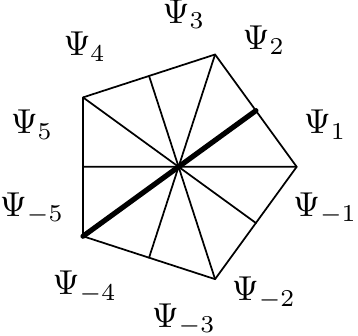}
    }
    \hfill
    \subfigure[{The permutation {$u$} for the rotation of angle
      ${i\pi\over 5}$ is
      {$\Psi_{u(1)} = \Psi_{-1}$}, {$\Psi_{u(2)} = \Psi_{1}$},
      {$\Psi_{u(3)} = \Psi_{2}$},
      and so on.}]{
      \centering
      \includegraphics{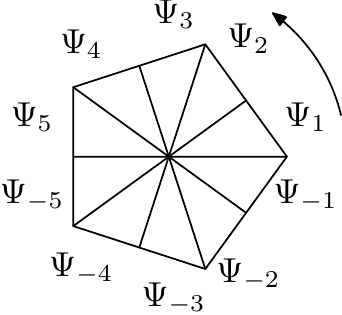}
    }
    \caption{Pentagon with 10 sectors and
    the transition functions, for codimension 5, 
    with the symmetry axis of $f$ in bold in
    (a) and (b).}
    \label{fig:polygone module}
  \end{figure}
  
  Let $(\Psi_{-k},\ldots,\Psi_{-1}$, $\Psi_1,\ldots,\Psi_k)$ be a
  representative of the analytic invariant of $g$. Let us suppose that the symmetry 
  axis of $f$ is along the reflection axis of $s\in D_{2k}$.
  We rotate the coordinate by, say, an angle of ${2\ell i\pi \over k}$,
  so that the symmetry axis corresponds to the (repulsive) real axis 
  (this is possible because $f$ is of positive type). 
  The rotation corresponds to an element $r\in D_{2k}$ and a
  permutation of indices, thus
  we obtain $r\inv sr = s_0$, where $s_0(j)= -j$ is the permutation for
  $s$ in the new coordinate.
  The representative of the modulus of $g$ is permuted: 
  $(\Psi_{r(-k)},\ldots,\Psi_{r(-1)},\Psi_{r(1)},\ldots,\Psi_{r(k)})$
  as in \eqref{eq:relation g rot};
  it now has an equivalent representative 
  $(\Psi_{r(-k)}',\ldots,\Psi_{r(-1)}',\Psi_{r(1)}',\ldots,\Psi_{r(k)}')$
  under Relation~\eqref{eq:relation g} obtained from the Fatou coordinates 
  of $f$, so that it satisfies
  $$
    \Psi_{r(j)}'\circ \STt = \STt \circ \Psi_{r(-j)}'.
  $$
  or equivalently
  \begin{equation}\label{eq:Psi commute s}
    \Psi_j'\circ \STt = \STt \circ \Psi_{s(j)}'.
  \end{equation}
  The analytic invariant of $f$ is then 
  $[\Psi_{r(1)}',\ldots,\Psi_{r(k)}']$.
  \medskip

\fussy

Lastly, we will talk about the modulus of the inverse
of a holomorphic parabolic germ $g$. The following 
proposition is probably well-known, but we could not find it
in the literature, so the proof is included.

\begin{prop}\label{prop:ginv}
  Let $g$ be a holomorphic parabolic germ.
  Let the modulus of $g$ be 
  $(k,b,[\Psi_{-k},\ldots,\Psi_{-1},\Psi_1,\ldots,\Psi_k])$.
  Then the modulus of $g\inv$
  is given by
  \begin{equation}\let\wt=\widetilde
    (k,-b, [\wt\Psi_{-k},\ldots,\wt\Psi_{-1},
      \wt\Psi_1, \ldots, \wt\Psi_{k}]),
  \end{equation}
  where $\widetilde \Psi_j$ stands for $L_{-1}\circ \Psi_{r_1\inv(j)}\inv\circ L_{-1}$,
  with $L_{-1}\colon Y\mapsto -Y$ and with $r_1$, the rotation of the indices
  induced by $y\mapsto \smash{e^{{i\pi \over k}}}y$ 
  (see Definition~\ref{deff:action s}).
\end{prop}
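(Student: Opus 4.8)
The plan is to exploit that $g$ and $g\inv$, being the time-$1$ and time-$(-1)$ maps of the same vector field~\eqref{eq:champ}, share the same Fatou petals $S_j$ and the same time coordinate, the only change being that attractive and repulsive petals are interchanged. First I would settle the formal part. Since $g\inv$ is the time-$1$ map of $-v$, I would push $-v$ forward by the rotation $L_{e^{i\pi/k}}\colon z\mapsto e^{i\pi/k}z$. A short computation (using $e^{-i\pi}=-1$, so that the leading term $-z^{k+1}$ becomes $+w^{k+1}$ while $1+bz^k$ becomes $1-bw^k$) shows that $L_{e^{i\pi/k}}$ conjugates $-v$ to the normal form~\eqref{eq:champ} with $b$ replaced by $-b$. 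Hence $g\inv$ is parabolic of codimension $k$ with formal invariant $-b$, and $L_{e^{i\pi/k}}$ is exactly the prenormalizing rotation whose induced index permutation (Definition~\ref{deff:action s}) is $r_1$. Note that this argument uses nothing about $b$ being real or about $g$ having the form $f\circ f$, so it applies to a general holomorphic parabolic germ.

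Next I would transport the Fatou coordinates. If $\Phi_j$ are Fatou coordinates of $g$ on $S_j$, with $\Phi_j\circ g\circ\Phi_j\inv=T_1$, then $\Phi_j\circ g\inv\circ\Phi_j\inv=T_{-1}$; since $L_{-1}\circ T_{-1}\circ L_{-1}=T_1$, the maps $L_{-1}\circ\Phi_j$ are Fatou coordinates of $g\inv$ on the same petals $S_j$, now carrying the opposite attractive/repulsive roles. Composing on the source with the prenormalizing rotation $L_{e^{i\pi/k}}$ then relabels the petals according to $r_1$.

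Then I would compute the transition functions across each overlap of two consecutive petals. For $g$ the convention of Definition~\ref{def:Psi} makes each $\Psi_j$ run from the repulsive to the attractive petal; for $g\inv$ these roles are swapped, so the corresponding transition function runs in the opposite direction and is the functional inverse, now read in the coordinates $L_{-1}\circ\Phi_j$. Concretely, if $\Psi=\Phi_B\circ\Phi_A\inv$ is the $g$-transition across an overlap (from the repulsive $A$ to the attractive $B$), then the $g\inv$-transition across the same overlap is $(L_{-1}\circ\Phi_A)\circ(L_{-1}\circ\Phi_B)\inv=L_{-1}\circ\Psi\inv\circ L_{-1}$, using $L_{-1}\inv=L_{-1}$. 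Carrying the $r_1$-relabelling of the petals through this identity produces $\widetilde\Psi_j=L_{-1}\circ\Psi_{r_1\inv(j)}\inv\circ L_{-1}$, which is the asserted formula.

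The step I expect to demand the most care is verifying that the family $(\widetilde\Psi_j)$ is \emph{normalized} for the invariant $-b$ in the sense of Definition~\ref{def:normal}, and that the index bookkeeping is coherent: the interchange of attractive and repulsive petals together with the one-petal shift by $r_1$ interacts delicately with the parity-dependent convention of Definition~\ref{def:Psi}, so one must check that the relabelled, reversed maps still obey that convention for $g\inv$. For the constant terms the computation is clean: passing from $\Psi$ to $L_{-1}\circ\Psi\inv\circ L_{-1}$ leaves the constant $c_j$ unchanged, so $\widetilde\Psi_j$ has constant term $c_{r_1\inv(j)}$, and a direct check against the explicit permutation $r_1$ shows these are exactly the values prescribed by~\eqref{eq:cj} for the invariant $-b$. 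Finally, independence of the choice of $\Phi_j$ (which only alters the $\Psi_j$ by translations) and of the particular sign-flipping rotation used to prenormalize (any two such differ by a rotation of order $k$, which is quotiented out in the modulus) shows that the class $[\widetilde\Psi_{-k},\ldots,\widetilde\Psi_k]$ is well defined, completing the identification of the modulus of $g\inv$ as $(k,-b,[\widetilde\Psi_{-k},\ldots,\widetilde\Psi_k])$.
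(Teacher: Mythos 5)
Your proposal is correct and follows essentially the same route as the paper: conjugate $g\inv$ by the rotation $L_{e^{i\pi/k}}$ (time reversal of the vector field giving formal invariant $-b$ and the index shift $r_1$), observe that composing Fatou coordinates with $L_{-1}$ turns the $T_{-1}$-conjugacy into a $T_1$-conjugacy, and deduce $\widetilde\Psi_j = L_{-1}\circ\Psi_{r_1\inv(j)}\inv\circ L_{-1}$ from the swap of attractive and repulsive petals. The only differences are presentational: the paper carries out the bookkeeping explicitly in the time coordinates $Y_j$ (via the relation $L_{-1}\circ Y_j = Z_{j-1}\circ L_\lambda\inv$) rather than directly on the petals, while your added verification of the normalization of the constant terms and of well-definedness of the class are details the paper leaves implicit.
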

\begin{proof}
\def\gtilde{\widetilde g}
Suppose $g$ is prenormalized.  
The dynamics of $g\inv$ is reversed, therefore the dynamics of $g\inv$
in the first sector $S_0$ is attractive, but the dynamics in
$S_{-1}$ is repulsive. We apply the change of coordinate
$y = L_\lambda(z) = \lambda z$, where $\lambda = e^{i\pi\over k}$. 
Let $\gtilde\inv = L_\lambda \circ g\inv\circ L_\lambda\inv$.

At the formal level, we apply $(z,t)\mapsto (y,-t)$ to  the vector 
field~\eqref{eq:champ} to obtain
\begin{equation}\label{eq:champ inv}
  \dot y = w(y) = {y^{k+1}\over 1 - by^k}.
\end{equation}
The formal normal form of $\gtilde\inv$ is the time-1 map $w^1$
of~\eqref{eq:champ inv}. In particular, $\gtilde\inv$ has
formal invariant $-b$. We will denote the sectors
of $\gtilde\inv$ in the $y$-coordinate by $\widetilde S_j$. 
Note that $\widetilde S_j = L_\lambda\inv(S_j) = S_{j-1}$.

The time coordinate on the sector $\widetilde S_j$ is defined as
\begin{equation}
  Y_j(y) = {-1\over ky^k} - b\log y + {ji\pi b\over k}.
\end{equation}
We have the relation $L_{-1}\circ Y_j = Z_{j-1}\circ L_\lambda\inv$,
where the $Z_j$'s are the time coordinates of $g$. Indeed,
we see that
$$
  Z_{j-1}(e^{-i\pi\over k}y) = {1\over k y^k}
    + b \log y - {i\pi b\over k} - {(j-1)i\pi b\over k}
    = -Y_j(y).
$$

Let $\Phi_j$ be a Fatou coordinate of $g$ in $S_j$. We know
that $\Phi_j\circ Z_j\circ g\inv\circ Z_j\inv \circ \Phi_j\inv = T_{-1}$. 
We will
show that $L_{-1}\circ \Phi_j\circ L_{-1}$ is a Fatou
coordinate of $\gtilde\inv$ on $\widetilde S_{j+1}$. Indeed, we have
\begin{align*}
  \big(L_{-1}\circ \Phi_j\circ L_{-1}\big)\circ Y_{j+1}
  \circ \gtilde\inv\circ Y_{j+1}\inv 
  & {}\circ \big(L_{-1}\circ \Phi_j\circ L_{-1}\big)\inv\\[2\jot]
  &= L_{-1}\circ \Phi_j\circ Z_j\circ g\inv\circ Z_j\inv\circ \Phi_j\inv\circ L_{-1}\\[2\jot]
  &= L_{-1} \circ T_{-1}\circ L_{-1} = T_1.
\end{align*}

It follows that the transition functions $\widetilde \Psi_j$ of $\gtilde\inv$
are given by, for $j\not=1,k$ odd,
\begin{align*}
  \widetilde \Psi_j
    &= \widetilde \Phi_j\circ (\widetilde \Phi_{j - \sgn(j)})\inv\\[2\jot]
    &= L_{-1}\circ \Phi_{j-1} \circ \Phi_{j-1-\sgn(j-1)}\inv \circ L_{-1}\\[2\jot]
    &= L_{-1} \circ \Psi_{j-1}\inv\circ L_{-1}.
\end{align*}
The other values of $j$ are done similarly. Note 
however that for $j=1$, the equation becomes 
$\widetilde\Psi_1 = L_{-1}\circ \Psi_{-1}\inv\circ L_{-1}$,
and that for $j=k$, we have 
$\widetilde \Psi_{-k} = L_{-1}\circ \Psi_k\inv\circ L_{-1}$.
\end{proof}

\color{black}

\bgroup
\let\centering=\raggedcenter 
\section{Space of Orbits and Classification Under Analytic Conjugacy}
\egroup

When two antiholomorphic parabolic germs are analytically conjugate,
it is clear that their space of orbits are essentially the same. Our guiding
principle is that \emph{the space of orbits completely describes the 
dynamics of the germs}; when two germs have the same space of orbits,
they should be analytically conjugate. A formal statement will be given in the
form of the Classification Theorem~\ref{theo:class} in Section~\ref{sec:class theo}.

\subsection{Description of the Space of Orbits} \label{sec:espace des orbites}
The space of orbits in the holomorphic case is
well known. It is briefly described in \cite{nonlinear}.
We will introduce the objects from the holomorphic case
needed for the antiholomorphic description.

In each Fatou coordinate of $f$ (and $f \circ f$), we may choose a
fundamental domain of $g = f\circ f$ by taking any
vertical strip $B_j$ of width~1. We quotient by the 
action of $T_1$, obtaining the bi-infinite Écalle cylinders.
Some orbits of $g$ will appear in two
consecutive cylinders. Since $\Psi_j \circ T_1 = T_1\circ \Psi_j$,
we identify together those orbits by
identifying $W_j$ with $\Psi_j(W_j)$
(or $\Psi_j\inv(W_j)$ depending on $j$).

The universal covering $E\colon \Cp\to \Cp^\ast$
given by $w = E(W) = \exp(-2i\pi W)$ is a biholomorphism
of each cylinder onto $\Cp^\ast$. This allows us to see
the Écalle cylinders as Riemann spheres punctured at $0$ and infinity
$\Ss^2_j\setminus \{0,\infty\}$ (see Figure~\ref{fig:bande}).
We will define the horns maps using this universal covering.

\begin{figure}[htbp]
  \includegraphics{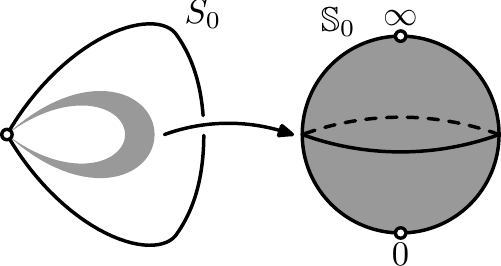}
  \caption{A fundamental domain obtained by $Z_0\inv(B_0)$
  in the $z$-coordinate (in gray)
  and the sphere it represents}
  \label{fig:bande}
\end{figure}

\begin{deff}\label{deff:horn maps}
  The \emph{horns maps} $\psi_j$, for $j=1,\ldots,k$, 
  are defined by
  $\psi_j = E\circ \Psi_j\circ E\inv$,
  where $\Psi_j$ is a transition function and
  \begin{equation}
    E(W) = \exp(-2i\pi W).
  \end{equation}
\end{deff}
For positive $j$ odd (resp.\null{} $j$ even), the horn maps are
defined on a punctured neighbourhood of the origin (resp.\null{} 
of infinity) with their image in a neighbourhood
punctured of the origin (resp.\null{} of infinity). By the
Riemann Removable Singularity Theorem, they extend to
$$
  \psi_j \colon 
  \begin{cases}
      (\Cp,0) \to (\Cp,0), & \hbox{for $j > 0$ odd;}\\[3\jot]
      (\Cp,\infty) \to (\Cp,\infty), & \hbox{for $j > 0$ even.}
  \end{cases}
$$
To retrieve the $\psi_j$ for $j < 0$, we use Equation~\eqref{eq:Psi moins} 
in the coordinate $w = E(W)$
\begin{equation}
  \psi_{-j} = L_{-1}\circ \tau \circ \psi_j \circ L_{-1}\circ \tau,
\end{equation}
where $L_{-1}(w) = -w$ and $\tau(w) = {1 \over \wbar}$.

The space of orbits of $g = f\circ f$ is described by the $2k$ spheres with
identifications at the origin or at infinity, as seen
in Figure~\ref{fig:espace des orbites ff}.

\begin{figure}[htbp]
  \centering
  \includegraphics{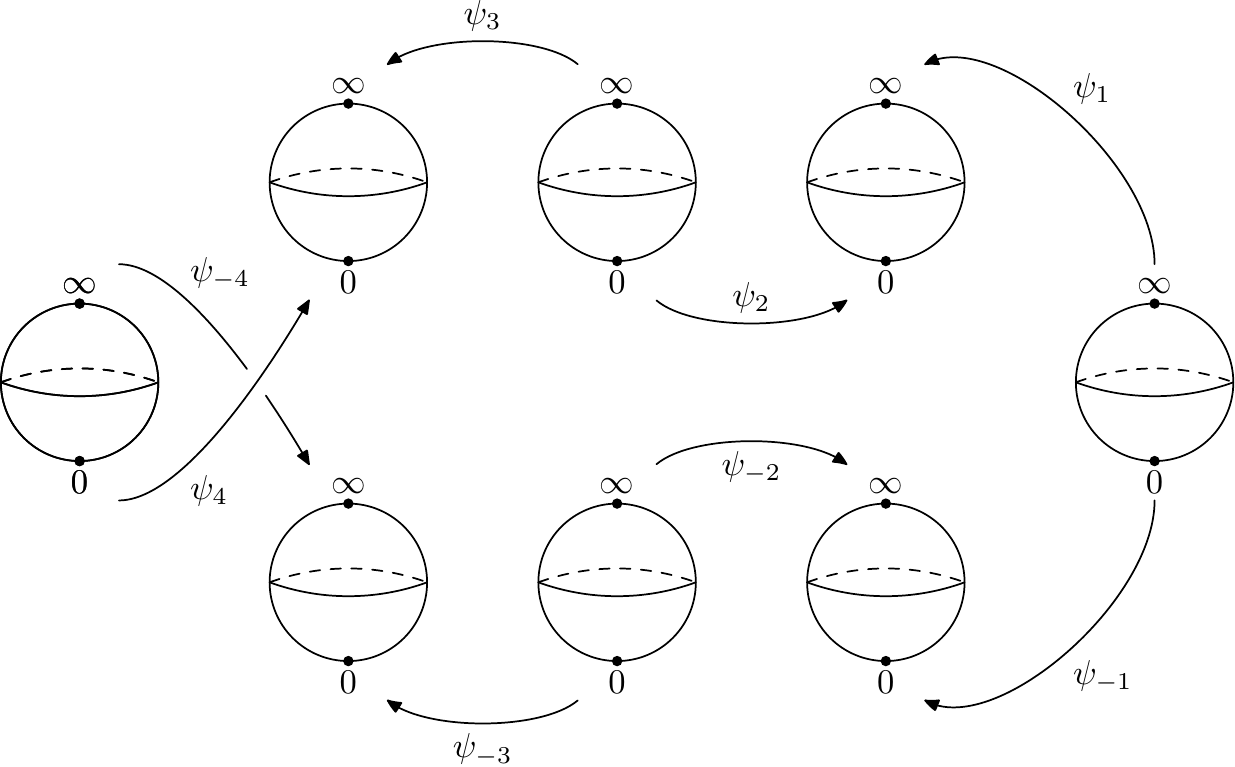}
  \caption{Space of orbits for $f\circ f$ of codimension~4}
  \label{fig:espace des orbites ff}
\end{figure}

\begin{rem}
By differentiating $E\circ \Psi_j \circ E\inv$, we
obtain
\begin{equation}
  \begin{aligned}
  {(\psi_j)}'(\infty) &= e^{{2\pi^2b\over k}}\quad\rlap{ for $j$ odd;}\\
  {(\psi_j)}'(0) &= e^{{2\pi^2b\over k}}\quad\rlap{ for $j$ even;}
  \end{aligned}
\end{equation}
so the product of those derivatives for $j\in\{-k,\ldots,-1\}\cup\{1,\ldots,k\}$
is~$e^{4\pi^2 b}$.
\end{rem}

\subsection{The space of orbits of $f$}\label{sec:espace des orbites f}
The complex conjugation
$\Sigma$ becomes $\tau(w) = {1\over \overline w}$ on the spheres,
where $w\in \Ss_j^2$ and $\tau(w)\in \Ss_{-j}^2$. The translation
$\Tt$ becomes $L_{-1}(w) = -w$. In the Fatou coordinates,
$f$ is $\STt$, so that on the spheres, $f$ is 
$L_{-1}\circ \tau(w) = -{1\over \overline w}$. To obtain the space of
orbits of $f$, we identify $w$ and $L_{-1}\circ \tau(w)$ in the space
of orbits of $f \circ f$, that is on the $2k$ spheres
above.

Let us first consider the case of codimension~1, so that we
have two sectors $S_0$ and $S_1=S_{-1}$ in the $z$-coordinate
and two spheres $\Ss_0^2$ and $\Ss_1^2$.
Recall that for $z\in S_0$, $f\inv(z)$ is still in $S_0$.
This means that $L_{-1}\circ \tau$ acts on $\Ss_0^2$.
It sends $0$ to $\infty$, the northern hemisphere to
the southern hemisphere, and the equator on itself. On
the equator $|w|=1$, we identify $w$ to $-w$. The resulting
surface is the real projective space $\R\Pp^2$.
This is also true for $S_1$ and $\Ss_1^2$. The equators
of both spheres play a special role; they each represent
orbits along an invariant half-curve that each forms a 
``semi-axis'' of reflection for $f$.

Therefore, in codimension~1, the space of orbits is two
real projective spaces with one germ of holomorphic
diffeomorphism 
$$[\psi_1]\colon (\R\Pp^2,[0])\to (\R\Pp^2,[0]),$$
where $[\psi_1]$ is the equivalence class of $\psi_1$ 
under the quotient of $\Ss^2$ to $\R\Pp^2$
and $[0]$ is equivalence class of the points $\{0,\infty\}$ identified
together. The class $[\psi_1]$ is well-defined since
$\psi_1\circ L_{-1}\circ \tau = \tau \circ L_{-1}\circ \psi_{-1}$.

In codimension~$k>1$, the spheres $\Ss_0$ and $\Ss_k$ both
quotient to a real projective space, but the other spheres
are identified in pairs $(\Ss_j,\Ss_{-j})$, so that the
quotient of the union of the two spheres is diffeomorphic 
to a sphere. The space of orbits
is then described by two real projective spaces together with
$k-1$ spheres and $k$ equivalence classes of horn maps
$[\psi_j] = \{\psi_j,\psi_{-j}\}$. The class $[\psi_j]$ 
defines a germ at $[0]=[\infty]$ on the quotiented spheres $(\Ss_j,\Ss_{-j})$, 
since the representatives satisfy
$\psi_j\circ L_{-1}\circ \tau = L_{-1}\circ \tau \circ\psi_{-j}$.

\begin{figure}[htbp]
  \includegraphics{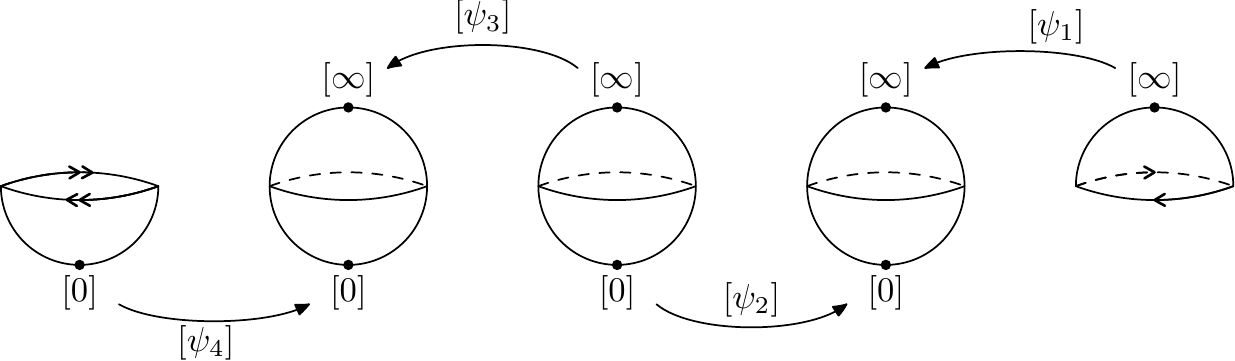}
  \caption{The space of orbits of $f$ in codimension~$4$}
\end{figure}

On the two extreme projective spaces we have a distinguished curve 
given by the equator. The only changes of coordinates on $\mathbb D$
preserving the equator and sending opposite points to opposite points 
are the linear maps $L_c$ with $|c|=1$. Hence the lines $\Im W = y$  
are invariant. This $y$ is the generalization of the \emph{Écalle height} 
introduced in \cite{multiNotPath}.
%
However, on the other spheres, there does not appear to be
a quantity preserved by changes of coordinates.

The two projective spaces correspond to the orbits of two Fatou petals
containing the formal symmetry axis of $f$. The equator
of each projective space corresponds to a semi-axis of symmetry
in each respective petal. These axes meet at the origin, but generally 
they cannot be extended into a real analytic curve. We will
see in Theorem~\ref{theo:f reel} exactly when they extend 
into a real analytic curve. This formal symmetry axis explains 
the existence of the Écalle height in the two projective spaces 
and why the Écalle height does not exist in the other spheres.
\color{black}

\subsection{Classification Under Analytic Conjugacy}
\label{sec:class theo}
We can now state and prove the main theorem of this paper.
\begin{theo2}\label{theo:class}
  For antiholomorphic parabolic 
  germs of codimension~$k$ (and of positive type 
  when $k$ is even), we have:
  \begin{enumerate}
    \item The modulus of classification is a 
    complete invariant of analytic classification
    under holomorphic conjugacy;
    \item The moduli space is the
    set of $(k,b,[\Psi_1,\ldots,\Psi_k])$,
    where the constant terms of $\Psi_j$
    satisfy~\eqref{eq:cj}, and $[\Psi_1,\ldots,\Psi_k]$
    is the equivalence class under the Relation~\eqref{eq:relation}
    (and \eqref{eq:rel k pair} for $k$ even).
  \end{enumerate}
\end{theo2}
\begin{proof}
  The proof is analogous to that in the holomorphic case, 
  which can be found in \cite{nonlinear} and \cite{LectDiffEq}.

  1. Let $f_\ell\colon(\Cp,0) \to (\Cp,0)$ be a
  germ of antiholomorphic diffeomorphism with
  a parabolic fixed point at the origin and let
  $(k_\ell,b_\ell,[\Psi_{1,\ell},\ldots,\Psi_{k,\ell}])$
  be its modulus of classification, for $\ell=1,2$.
  We can of course suppose that $f_\ell$ is prenormalized.

  Suppose first that $f_2(u) = h\circ f_1\circ h\inv(u)$
  for some germ of analytic diffeomorphism $u = h(z)$. 
  The germs $f_1$ and $f_2$ must
  have the same codimension and formal invariant, since they
  are topological and formal invariants.
  For the analytic invariant, first let $F_1$, $F_2$
  and $H$ denote the expressions of $f_1$, $f_2$ and $h$
  in the time coordinate. If $\Phi_j$ is a Fatou
  coordinate of $F_2$ on $U_j$, then $\Phi_j\circ H$ is a
  Fatou coordinate of $F_1$. It follows that they have
  the same transition functions.

  \sloppy 

  Conversely, suppose $f_1$ and $f_2$ have the same
  modulus. We can choose a common normalized representative
  $(\Psi_1,\ldots,\Psi_k)$ for both classes and Fatou coordinates
  $\Phi_{j,\ell}$ for $f_\ell$ ($\ell=1,2$), such that
  the $\Psi_j$ are the transition maps for these
  Fatou coordinates. Let $\varphi_{j,\ell} = Z_j\inv\circ \Phi_{j,\ell}\circ Z_j$, 
  for $\ell=1,2$, be the expression of Fatou coordinates in 
  the $z$-coordinate.  Then we define
  $$
    h_j(z) = \varphi_{j,1}\inv\circ \varphi_{j,2}(z)
  $$ 
  on $S_j$. It sends the orbits of $f_2$ to those of $f_1$
  on $S_j$. On the intersection of two consecutive sectors
  $S_j\cap S_{j+1}$, we see that
  $$
    h_{j+1}\circ h_j\inv(z) = id,
  $$
  so the $h_j$'s agree on the intersection.
  We may define $h$ on $\bigcup_j S_j$
  by $h(z) = h_j(z)$ if $z\in S_j$. It sends a
  punctured neighbourhood of the origin into
  another punctured neighbourhood of the origin,
  so by the Riemann Bounded Extension Theorem, $h$
  extends to a holomorphic diffeomorphism
  of a neighbourhood of the origin.
  
  \fussy

  Finally, we see that $h\circ f_2 = f_1 \circ h$ 
  since it sends the orbits of $f_2$ to the orbits
  of $f_1$ on a whole neighbourhood of the origin.

  2.\null{} The proof is in two steps. First we construct an
  abstract Riemann surface $S$ on which 
  $\STt$ is well defined and we prove that this surface has the
  conformal type of a punctured disk. Then we prove that
  $\STt$ is the germ we are looking for on the disk.

  Consider a triple $(k,b, [\Psi_1,\ldots,\Psi_k])$. We
  must find a parabolic germ of antiholomorphic diffeomorphism
  with this modulus of classification.

  Let $(\Psi_1,\ldots,\Psi_k)$ be a representative of the
  analytic invariant and let $\Psi_{-1},\ldots,\Psi_{-k}$ be
  the other transition functions obtained from~\eqref{eq:Psi moins}. 
  Let $\sigma\circ \vt$ be the normal form of codimension~$k$
  with formal invariant $b$. Let $U_j$ be the charts in the time coordinate
  of $v$. 
  
  We consider the transition functions defined on
  those charts. More precisely, $\Psi_1$ is defined on a domain of $U_0$ 
  containing an upper-half plane with its image in
  $U_1$; $\Psi_2$ is defined on a domain of
  $U_2$ containing a lower half-plane with its image
  in $U_1$, and so on. We define the Riemann surface
  $S$ by
  $$
    S = \bigsqcup_{j=-k}^k U_j\big/\sim
  $$
  where $\sim$ identifies $W_j\in U_j$ with its image by
  $\Psi_j$ or $\Psi_{j-1}$ (depending on $j$).
  As it is done in \cite{LectDiffEq}, we can build
  a smooth quasi-conformal mapping $P\colon S\to \Cp^\ast$,
  and from the Ahlfors-Bers Theorem (see \cite{quasi}), find a diffeomorphism
  $Q\colon D \to (\Cp,0)$, where $D = P(S)\cup\{0\}$,
  so that the composition
  $H = Q\circ P\colon S\to (\Cp,0)$ is holomorphic. In fact,
  $H$ is a biholomorphism of $S$ onto some punctured disk
  of the origin.

  By~\eqref{eq:Psi commute}, we know that $\STt$ is well
  defined on $S$. The map $f = H\circ \STt\circ H\inv$
  extends to the origin by $f(0) = 0$, since it is
  bounded around $0$ (we can apply the Riemann Removable Singularity 
  Theorem to $f\circ \sigma$ to see this).
  Lastly, $T_1$ is also well-defined on $S$, so
  we set $g = H\circ T_1\circ H\inv$. Since 
  $T_1 = (\STt)\circ (\STt)$, it follows
  that $g = f\circ f$, and by the chain rule, 
  $\left|{\del f\over \del\zbar}\right|^2 = g'(0)$.
  By the holomorphic case, we know $g$ is a 
  holomorphic parabolic germ of codimension~$k$, so it
  follows that $f$ is parabolic and of codimension~$k$.
  The formal invariant of $f$ is $b$, since it is
  determined by~\eqref{eq:b}.

  It remains only to prove that $f$ is of positive type.
  The formal symmetry axis of $f$ is the real line,
  since the $\Psi_1$ and $\Psi_{-1}$ are defined on
  $U_0$ and $\Psi_j\circ \STt = \STt\circ \Psi_{-j}$.
  Moreover, the petal $S_0$ of $f$ is
  repulsive, since $H\big|_{U_0}\circ Z_0\colon U_0\to U_0$
  is a Fatou coordinate of $f$ and $U_0$ contains a half-plane
  $\{\Re Z < R\}$, i.e.\null{} $S_0$ contains
  all the backward iterates of $f$.
  Therefore, $f$ is of positive type.

  We conclude $f$ is a germ with modulus
  $(k,b,[\Psi_1,\ldots,\Psi_k])$.
\end{proof}

\section{Applications of the Classification Theorem}\label{sec:applications}
Here we will solve Questions~\sanspt{\getrefnumber{Q:premier}} 
to~\sanspt{\getrefnumber{Q:dernier}} of the introduction and
other related questions.

\subsection{Embedding in a Flow or the Complex Conjugate of a Flow}
If a holomorphic germ $g$ is conjugate to the normal form, that is
$g = h\circ \vun\circ h\inv$, then it is embedded in
the family $g_t = h\circ v^t\circ h\inv$.
Similarly, we will say that $f$ is \emph{embeddable}
if it is embedded in the family 
$f_t:=h\circ \sigma\circ v^t\circ h\inv$.
When is an antiholomorphic parabolic germ embeddable?
This corresponds to Question~\sanspt{\getrefnumber{Q:flot}}. 
The answer is read in the modulus of classification.

\begin{theo}[Embedding in a flow]
  An antiholomorphic parabolic germ $f$ is
  analytically conjugate to its normal form
  (i.e.\null{} embeddable)
  if and only if the transition functions of
  $f$ are translations.
\end{theo}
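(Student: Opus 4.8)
The plan is to deduce everything from the Classification Theorem~\ref{theo:class}. Embeddability means exactly that $f$ is analytically conjugate to its normal form $\sigma\circ\vt$, so by Theorem~\ref{theo:class} the two germs are conjugate if and only if they share the same modulus $(k,b,[\Psi_1,\ldots,\Psi_k])$. Since $f$ and $\sigma\circ\vt$ automatically have the same codimension $k$ and formal invariant $b$, the entire statement reduces to comparing analytic invariants, and the key preliminary step is to identify the transition functions of the normal form itself.

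First I would compute those transition functions. In the time coordinate the normal form $\sigma\circ\vt$ is precisely $\STt$, and the time-coordinate charts $Z_j\colon S_j^v\to U_j$ already conjugate it to $\STt$; hence one may take $\Phi_j=id$ for every $j$ in Proposition~\ref{prop:coord Fatou}. With this choice the transition functions of Definition~\ref{def:Psi} reduce to the chart-gluing maps of the time coordinate, namely $T_{-i\pi b/k}$ for $j$ odd and $T_{i\pi b/k}$ for $j$ even. A direct check shows that their constant terms are exactly those prescribed by the normalization~\eqref{eq:cj}, while all higher Fourier coefficients in~\eqref{eq:Psi fourier} vanish. Thus the normalized transition functions of $\sigma\circ\vt$ are pure translations.

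With this identification both implications follow at once. For the forward direction, if $f$ is analytically conjugate to $\sigma\circ\vt$, then Theorem~\ref{theo:class} forces the analytic invariant of $f$ to be represented by these translations; and because conjugation by a real translation $T_C$ fixes every translation, the equivalence class~\eqref{eq:relation} of $f$ continues to consist of translations. For the converse, suppose the normalized transition functions $\Psi_j$ of $f$ are translations. Then every $c_{n,j}$ with $n\neq0$ in~\eqref{eq:Psi fourier} vanishes, while the constant terms $c_j$ are pinned by~\eqref{eq:cj}; hence $(\Psi_1,\ldots,\Psi_k)$ coincides with the normalized transition functions of $\sigma\circ\vt$ computed above. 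The two germs therefore have identical moduli, and Theorem~\ref{theo:class} yields an analytic conjugacy.

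The one point needing genuine verification is the claim in the second paragraph that the identity charts of the time coordinate serve as Fatou coordinates for the normal form and produce exactly the translation gluing maps. This is essentially built into the construction of Section~\ref{sec:Fatou}, since $\STt$ is already in normalized position there and the only nontrivial gluing between consecutive charts is the translation $T_{-i\pi b/k}$; no analytic estimate beyond this bookkeeping is required.
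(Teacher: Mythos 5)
Your proof is correct and follows essentially the same route as the paper: the paper's (two-sentence) argument is precisely that the time charts $Z_j$ are Fatou coordinates of the normal form, so its transition functions are the translation gluing maps, and the equivalence then follows from the Classification Theorem~\ref{theo:class}. Your version merely spells out the bookkeeping (the normalization~\eqref{eq:cj}, the vanishing of the nonconstant Fourier coefficients, and the fact that the relation~\eqref{eq:relation} preserves the property of being a translation), which the paper leaves implicit.
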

\begin{proof}
  The transition functions of the normal form
  are translations since the time charts $Z_j$
  are Fatou coordinates. Therefore, it follows from the
  Classification Theorem~\ref{theo:class}.
\end{proof}

In Proposition~\ref{prop:forme reelle}, we proved that
$f$ is always formally conjugate to the sum
of a formal germ with real coefficients. We ask the related question.
\begin{Q}\label{Q:reel}
  When is a parabolic antiholomorphic germ analytically
  conjugate to a series with real coefficients?
\end{Q}

Of course, this is the case for any embeddable germ. But 
we will show in Section~\ref{sec:forme reelle} that 
the embeddable germs form a subset of infinite 
codimension in the set of antiholomorphic parabolic germs 
conjugate to a germ with real coefficients.
We will answer this question in Section~\ref{sec:forme reelle}.
We first tackle Question~\sanspt{\getrefnumber{Q:racine}}.

\subsection{Antiholomorphic {\bfmi n}-th Root Problem}\label{sec:racine}
Question~\sanspt{\getrefnumber{Q:racine}} and its restatement 
Question~\ref{Q:racines bis}, as well as Question~\ref{Q:racines antihol}
are concerned with the existence of antiholomorphic
roots and their uniqueness.

\sloppy
\begin{theo}[Antiholomorphic Root Extraction Problem]
  Let $g$ be a holomorphic parabolic germ and let
  $(k,b,[\Psi_{-k},\ldots,\Psi_{-1},\Psi_1,\ldots\Psi_k])$
  be its Écalle-Voronin modulus (see Section~\ref{rem:module Ecalle}),
  then 
  \begin{enumerate}
  \item $g$ has $k$ one-parameter families of formal antiholomorphic 
  $n$-th roots, one for each of the $k$ formal axes of reflection;
  \item $g$ has an antiholomorphic $n$-th root ($n$ even) tangent to the
  formal symmetry axis $e^{i \pi\ell\over k}\R$
  if and only if there exists a representative 
  $(\Psi_{-k},\ldots,\Psi_{-1}$, $\Psi_1,\ldots,\Psi_k)$
  of the equivalence relation~\eqref{eq:relation g} 
  which has a symmetry property with respect to the formal
  symmetry axis $e^{i\pi\ell\over k}\R$, namely
  \begin{equation}\label{eq:racine existe}
    \Psi_j \circ \Sigma\circ T_{{1\over n}}
      = \Sigma \circ T_{{1\over n}} \circ \Psi_{s_\ell(j)},
  \end{equation}
  where $s_\ell$ is the reflection of indices with respect
  to $e^{i\pi\ell\over k}\R$ (see Definition~\ref{deff:action s});
  \item If $g$ is not analytically conjugate to its normal
  form, then 
  \begin{enumerate}
    \item[{\it i)\/}] Each family has at most one convergent root,
      so $g$ has at most $k$ distinct antiholomorphic $n$-th
      roots $f_j$;
    \item[{\it ii)\/}] If $g$ has $m$ distinct antiholomorphic 
      roots $f_{\ell_1}$,$\ldots$, $f_{\ell_m}$ with distinct 
      linear parts $e^{2i\pi\ell_j\over k}\zbar$,
      $j=1,\ldots,m$, then the modulus of $g$ has 
      $\gcd(\ell_m-\ell_1,\ldots,\ell_m-\ell_{m-1},k)$ 
      independent transition functions. 
  \end{enumerate}
  \end{enumerate}
\end{theo}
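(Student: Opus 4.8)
The plan is to read every assertion through the Écalle–Voronin modulus of $g$ and to reduce it to a symmetry statement about the transition functions, in exact parallel with the square-root case $n=2$ governed by Equation~\eqref{eq:racine existe} (with $\Sigma\circ T_{1\over n}$ in place of $\STt$). For part~(1) I would argue purely formally: since $g$ is formally conjugate to $\vun$, Proposition~\ref{prop:racine v} supplies, for $n$ even, the $k$ one-parameter families $\sigma_\ell\circ v^{{1\over n}+iy}$ ($y\in\R$, $\ell=0,\ldots,k-1$) of antiholomorphic $n$-th roots of $\vun$; transporting them by the formal conjugacy yields $k$ one-parameter families of formal roots of $g$, one tangent to each axis $e^{i\pi\ell/k}\R$ (the linear part of $\sigma_\ell\circ v^{{1\over n}+iy}$ being $e^{2i\pi\ell/k}\zbar$). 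A direct check gives $(\Sigma\circ T_{{1\over n}+iy})^{\circ n}=T_1$ independently of $y$, which explains the appearance of a whole one-parameter family, with $y$ the Écalle height.

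For part~(2) I would first rotate coordinates by the element of $D_{2k}$ carrying $e^{i\pi\ell/k}\R$ onto the real axis, turning $s_\ell$ into $s_0\colon j\mapsto -j$ and permuting the representative of the modulus as in~\eqref{eq:relation g rot}; it then suffices to treat $\ell=0$. Suppose an analytic root $f$ exists. Then $f$ commutes with $g=f^{\circ n}$, hence acts on the space of orbits of $g$, and since its linear part is $\zbar$ it sends the petal $S_j$ to $S_{-j}$. Writing $f$ in the Fatou coordinates of $g$ as $Q_j=\Phi_{-j}\circ F\circ\Phi_j\inv$, the map $\Sigma\circ Q_j$ is holomorphic and commutes with $T_1$, so the boundedness-and-periodicity argument of Proposition~\ref{prop:coord Fatou} forces $Q_j=\Sigma\circ T_{a_j}$; tracking $Q^{\circ n}=T_1$ through the sectors and absorbing the imaginary parts into the choice of representative normalizes $a_j$ to $1\over n$, and the resulting compatibility of $f$ with the gluing of consecutive Fatou coordinates is precisely~\eqref{eq:racine existe}.

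Conversely, given a representative satisfying~\eqref{eq:racine existe}, I would define $f$ on the space of orbits by $\Sigma\circ T_{1\over n}$ (sending $U_j$ to $U_{-j}$): the symmetry relation is exactly what is needed for this prescription to descend through the transition functions, so the construction of the Classification Theorem~\ref{theo:class} (building the abstract surface, uniformizing by Ahlfors--Bers, and extending across the origin by the Riemann Removable Singularity Theorem) produces an antiholomorphic parabolic germ $f$ with $f^{\circ n}=g$ tangent to the real axis. I expect this equivalence to be the main obstacle, since it requires matching the sectorial representation of the root with the algebraic symmetry of the modulus and handling the reduction by rotation carefully.

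For part~(3)(i) I would work inside a single family. A root in the family is represented by $\Sigma\circ T_{{1\over n}+iy}$ for some $y$; if two values $y_1\ne y_2$ both produced convergent roots, then setting $A=\Sigma\circ T_{{1\over n}+iy_1}$ and $B=\Sigma\circ T_{{1\over n}+iy_2}$, both would intertwine $\Psi_j$ with $\Psi_{s_0(j)}$ as in~\eqref{eq:racine existe}, whence each $\Psi_j$ commutes with the nontrivial vertical translation $T_{i(y_1-y_2)}$; feeding this into the Fourier expansion~\eqref{eq:Psi fourier} kills every nonconstant coefficient, so all transition functions are translations and $g$ is analytically conjugate to its normal form, a contradiction. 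Hence at most one convergent root per family, giving at most $k$ roots in all. For part~(3)(ii), each root $f_{\ell_a}$ realizes the reflection $s_{\ell_a}$ of indices, so the composite $f_{\ell_a}\circ f_{\ell_b}\inv$ is a holomorphic germ commuting with $g$ whose action on the modulus is the rotation $s_{\ell_a}\circ s_{\ell_b}$ by $2\pi(\ell_a-\ell_b)/k$ (a power of $r_1^2$ in the notation of Definition~\ref{deff:action s} and Proposition~\ref{prop:ginv}). The rotations coming from the $m$ roots generate the cyclic group of order $k/d$ with $d=\gcd(\ell_m-\ell_1,\ldots,\ell_m-\ell_{m-1},k)$, and this group acts on $\Psi_1,\ldots,\Psi_k$ with orbits of size $k/d$, leaving exactly $d$ independent transition functions, as claimed.
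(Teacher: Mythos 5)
Your parts (1), (3)(i) and (3)(ii) follow the paper's proof essentially verbatim (formal transport of Proposition~\ref{prop:racine v} for (1); the "commutes with a nontrivial vertical translation, hence all Fourier coefficients vanish" argument for (3)(i); the dihedral/index count for (3)(ii), where the paper phrases it as $|D_{2k}:H|$ for $H$ the group generated by the reflections rather than via the rotations $f_{\ell_a}\circ f_{\ell_b}\inv$, but the count is the same). The gap is in part (2), in the very first step: you "rotate coordinates by the element of $D_{2k}$ carrying $e^{i\pi\ell/k}\R$ onto the real axis, ... it then suffices to treat $\ell=0$." The only coordinate changes available here are the rotations of order $k$ (those are what induce the index permutations of~\eqref{eq:relation g rot}); conjugating $g$ by $z\mapsto e^{-i\pi\ell/k}z$ with $\ell$ odd sends $z+z^{k+1}+\cdots$ to $z-z^{k+1}+\cdots$, destroying the prenormalization (equivalently, swapping the roles of attractive and repulsive petals), and conjugating by a reflection of $D_{2k}$ is an antiholomorphic change of variable, which replaces $g$ by a different germ. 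Since a rotation of order $k$ shifts $\ell$ only by even amounts, when $k$ is even the axes $e^{i\pi\ell/k}\R$ with $\ell$ odd can never be brought to the real axis: your reduction covers only $k$ odd (where $e^{i\pi\ell/k}\R=e^{i\pi(\ell\pm k)/k}\R$ lets one assume $\ell$ even) and the even-$k$, even-$\ell$ case.

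The missing case is exactly where the extra work lies: for $k$ even and $\ell$ odd the would-be root is of \emph{negative type} (the dynamics along its axis is attractive), and the realization machinery you invoke from the Classification Theorem~\ref{theo:class} only produces germs of positive type (its petal $U_0$ is repulsive). The paper handles this by observing that if $f_\ell$ is such a root of $g$, then $f_\ell\inv$ is a positive-type root of $g\inv$, computing the modulus of $g\inv$ via Proposition~\ref{prop:ginv} (which is precisely why that proposition is proved), applying the positive-type case to $g\inv$, and then translating the resulting symmetry condition back through the maps $L_{-1}$ and the index rotation $r_1$ to recover~\eqref{eq:racine existe} for $g$ itself. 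Without this detour, part (2) of your argument is only established for half of the symmetry axes when the codimension is even.
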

\sloppy
\color{black}
\fussy
\begin{proof}
  1.\null{} That $g$ has a 1-parameter family of formal antiholomorphic
  $n$-th root on each symmetry axis is a consequence of the fact
  that $g$ is formally conjugate to the normal form $\vun$,
  and Proposition~\ref{prop:racine v}.

  2.\null{} We first prove this for the case $k$ odd.
  Let 
  $$
    {\bf \Psi} = (\Psi_{-k},\ldots,\Psi_{-1},\Psi_1,\ldots,\Psi_k)
  $$
  be a representative of the analytic invariant of $g$.

  First, we suppose that the representative satisfies \eqref{eq:racine existe}.
  To realize an antiholomorphic root tangent to the symmetry
  axis $e^{2i\pi\ell \over k}\R$, we rotate the coordinate by 
  $R_\ell(z) = e^{-{2i\pi \ell\over k}}z$ so that this axis is 
  on the real line and the dynamics of $R_\ell\circ g\circ R_\ell\inv$ 
  on the side of the positive real axis is repulsive. 
  Let $g_\ell = R_\ell\circ g\circ R_\ell\inv$ and let
  $r$ correspond to the permutation of indices defined by $R_\ell$ 
  (see Definition~\ref{deff:action s}). The representative is permuted into 
  $(\Psi_{r(-k)},\ldots,\Psi_{r(-1)},\Psi_{r(1)},\ldots,\Psi_{r(k)})$.
  Also, we have that $r\inv s_\ell r = s_0$, where $s_0(j) = -j$
  and $s_\ell$ is the reflection of indices induced by $\sigma_\ell$.
  Equation~\eqref{eq:racine existe} becomes
  $$
    \Psi_{r(j)}\circ \STt = \STt \circ \Psi_{r(-j)}.
  $$
  Now we may repeat the proof of part two of Theorem~\ref{theo:class}
  to obtain an antiholomorphic germ $f_\ell\colon (\Cp,0)\to (\Cp,0)$
  with analytic invariant $[\Psi_{r(1)},\ldots,\Psi_{r(k)}]$ such
  that $f_\ell$ is $\Sigma\circ T_{{1\over n}}$ in each Fatou
  coordinate of $g_\ell$. Since $(\Sigma\circ T_{{1\over n}})^{\circ n} = T_1$
  in the Fatou coordinates, it follows that $f_\ell^{\circ n} = g_\ell$.

  Conversely, suppose $g$ has an $n$-the root $f_\ell$ tangent to
  the reflection axis $e^{2i\pi \ell\over k}\R$.
  We rotate the coordinate by $R_\ell$. Let $f = R_\ell\circ f_\ell\circ R_\ell\inv$.
  Now in every Fatou coordinate of $g_\ell$,
  $f$ has the form $\Sigma\circ T_{{1\over n}+iy}$, by Proposition~\ref{prop:racine v}. 
  By changing Fatou coordinates, we may obtain $\Sigma\circ T_{{1\over n}}$. 
  Those Fatou coordinates give us a representative 
  $(\Psi_{-k}',\ldots,\Psi_{-1}',\Psi_1',\ldots,\Psi_k')$ that
  is equivalent to $\bf \Psi$ under the
  Relations~\eqref{eq:relation g} and~\eqref{eq:relation g rot} and
  that satisfies
  $$
    \Psi_{j}'\circ \STt = \STt \circ \Psi_{-j}'.
  $$
  It follows that 
  $(\Psi_{r\inv(-k)}',\ldots,\Psi_{r\inv(-1)}',\Psi_{r\inv(1)}',\ldots,\Psi_{r\inv(k)}')$ 
  is a representative equivalent to $\bf \Psi$ under
  Relation~\eqref{eq:relation g} which satisfies \eqref{eq:racine existe},
  using the fact that $(r\inv s_\ell r)(j) = -j$.

  In the case $k$ even, we may have antiholomorphic roots
  of positive and negative type. Those of positive type
  are tangent to an axis $e^{i\pi\ell\over k}\R$ with
  $\ell$ even and are done as before. Those of negative
  type are tangent to an axis $e^{i\pi\ell\over k}\R$ with
  $\ell$ odd. For such a root $f_\ell$, $f_\ell\inv$ will
  be a root of positive type of $g\inv$.
  By Proposition~\ref{prop:ginv}, the modulus of $g\inv$ is
  $$\let\wt\widetilde
    (k,-b, [\wt\Psi_{-k},\ldots,\wt\Psi_{-1},
      \wt\Psi_1, \ldots, \wt\Psi_{k}]),
  $$
  where $\widetilde \Psi_j = L_{-1}\circ \Psi_{r_1\inv(j)}\inv\circ L_{-1}$
  and $r_1$ is the rotation of indices induced by
  $z\mapsto \smash{e^{i\pi\over k}} z$.
  By the previous two paragraphs,  $g\inv$ has an $n$-th
  antiholomorphic root of positive type tangent to $e^{i \pi\ell\over k}\R$ 
  if and only if there exists a representative
  $(\wt\Psi_{-k},\ldots,\wt\Psi_{-1},
      \wt\Psi_1, \ldots, \wt\Psi_{k})$
  such that
  $$
      \widetilde \Psi_j \circ \Sigma\circ T_{{1\over n}}
        = \Sigma \circ T_{{1\over n}} \circ \widetilde \Psi_{s_\ell(j)},
  $$
  By simplifying the $L_{-1}$ and taking the inverse on both sides,
  we obtain~\eqref{eq:racine existe}.

  3.\null{} $i$) Suppose $g$ has two roots $f_1$
  and $f_2$ from the same family. In particular,
  they have the same linear term, so we may suppose
  they are tangent to $\sigma$, modulo conjugating $g$
  by a rotation of order $k$. In the Fatou coordinates, they take the
  form $\Sigma\circ T_{{1\over2} + iy_j}$, for $j=1,2$,
  by Proposition~\ref{prop:racine v}. Since 
  $\Sigma\circ T_{{1\over2} + iy_j}$ satisfies
  \eqref{eq:Psi commute} for $j=1,2$, and by combining
  with~\eqref{eq:Psi fourier}, we see that either
  $y_1 = y_2$ or the $\Psi_j$'s are translations.

  $ii$) Lastly, consider the dihedral group 
  $D_{2k}$ with its action on the regular 
  $k$-gon. Recall that we can divide a regular $k$-gon
  by its $k$ symmetry axes to form
  $2k$ sectors, which we associate to the transition
  functions (see Section~\ref{rem:module Ecalle} and
  Figure~\ref{fig:polygone module}).

  Let $H = \langle s_1,\ldots,s_m\rangle$ be 
  the subgroup of $D_{2k}$ generated by the reflections along the
  symmetry axes of $f_{\ell_1},\ldots, f_{\ell_m}$. 
  Each element of $H$ acts on the modulus by introducing
  relations of the type
  $$
    \STt\circ \Psi_j = \Psi_{s_\ell(j)}\circ \STt,
  $$
  thus reducing the number of independent $\Psi_j$ (see 
  Figure~\ref{fig:dodecagone}). The orbit by $H$ of a sector
  represents the transition functions tied together, therefore
  the number of independent transition functions corresponds
  to $|D_{2k}:H|$. To compute this, we first observe that
  $H$ must itself be a dihedral group, so that $H = D_{2j}$
  for some $j$ (see~\cite{grove}). In fact, $j$ is given by
  $$
    j = {k\over \gcd(\ell_m - \ell_1,\ldots,\ell_m - \ell_{m-1},k)},
  $$
  since any rotation $s_ms_p$ has order $k/\gcd(\ell_m - \ell_p,k)$.
  It follows that $|D_{2k}:H| = \gcd(\ell_m - \ell_1,\ldots, \ell_m - \ell_{m-1},k)$.
  \begin{figure}[htbp]
  \includegraphics{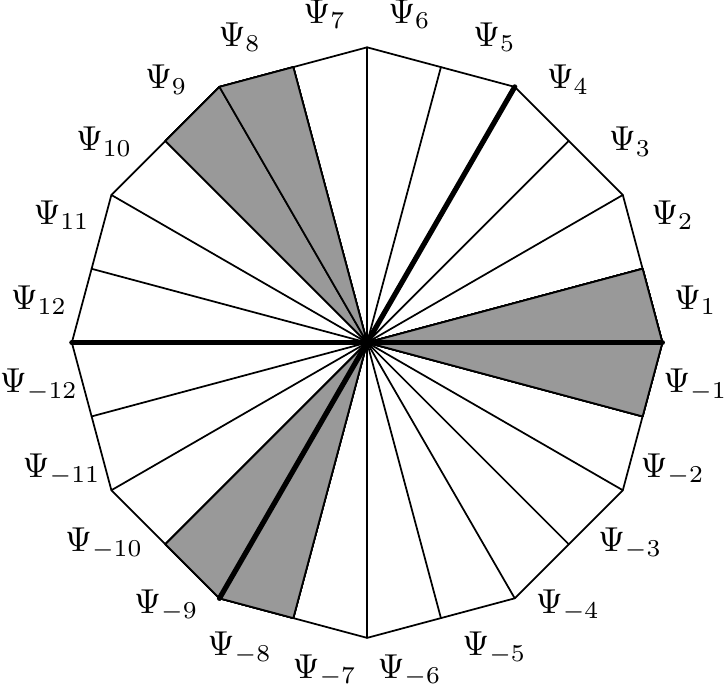}
  \caption{The orbit of one sector by the subgroup
  $H$ generated by two reflections (bold axes)
  in codimension 12.}
  \label{fig:dodecagone}
  \end{figure}
\end{proof}

\begin{theo}\label{theo:racines f}
  Let $f\colon(\Cp,0)\to(\Cp,0)$ be 
  an antiholomorphic parabolic germ of codimension $k$,
  and of positive type when $k$ is even.
  Let $(k, b, [\Psi_1,\ldots,\Psi_k])$
  be the modulus of $f$ and let $n\geq 3$
  be an odd number. Then $f$ has a single
  antiholomorphic formal $n$-th root ($n$ odd). Moreover, $f$
  has an antiholomorphic $n$-root ($n$ odd) 
  if and only if 
  \begin{equation}\label{eq:Psi commute n}
      \Psi_{j} \circ T_{{1\over n}}
      = T_{{1\over n}} \circ \Psi_{j}.
  \end{equation}
\end{theo}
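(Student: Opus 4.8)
The plan is to reduce everything to the space of orbits, exactly as in the proof of the Classification Theorem~\ref{theo:class}, and to exploit the rigidity of the Fatou coordinates. For the formal statement, $f$ is formally conjugate to its normal form $\STt$, so its formal antiholomorphic $n$-th roots are in bijection with those of $\STt$. By Proposition~\ref{prop:racine v}, for $n$ odd the normal form has a \emph{unique} antiholomorphic $n$-th root, namely $\sigma\circ\vtt^{{1\over 2n}}$, whose expression in the time coordinate is $\Sigma\circ T_{{1\over 2n}}$. Hence $f$ has exactly one formal antiholomorphic $n$-th root, and in any Fatou coordinate it is represented by $\Sigma\circ T_{{1\over 2n}}$.

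For the analytic statement, suppose first that~\eqref{eq:Psi commute n} holds. We build the Riemann surface $S$ of the space of orbits by gluing the charts $U_j$ along the transition functions $\Psi_j$, as in the proof of Theorem~\ref{theo:class}. A self-map given on each chart by one and the same translation descends to $S$ precisely when it commutes with every $\Psi_j$; thus~\eqref{eq:Psi commute n} says exactly that $T_{{1\over n}}$ descends to $S$. On the other hand $\STt$ already descends to $S$ by~\eqref{eq:Psi commute}. Since $n$ is odd and, using that $\Sigma$ commutes with real translations,
\[
  \Sigma\circ T_{{1\over 2n}} = \STt \circ T_{-\frac{n-1}{2n}} = \STt\circ\bigl(T_{{1\over n}}\bigr)^{\circ\frac{1-n}{2}},
\]
the map $\Sigma\circ T_{{1\over 2n}}$ is a composition of maps that descend, hence descends to $S$. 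Transporting through the biholomorphism $H\colon S\to(\Cp,0)$ of Theorem~\ref{theo:class}, we obtain a germ $r = H\circ \Sigma\circ T_{{1\over 2n}}\circ H\inv$, which extends to the origin by the Riemann Removable Singularity Theorem applied to $r\circ\sigma$. Because $\Sigma$ commutes with real translations and $\Sigma\circ\Sigma = id$, a direct computation gives $(\Sigma\circ T_{{1\over 2n}})^{\circ n} = \Sigma\circ T_{{1\over 2}} = \STt$ for $n$ odd, so $r^{\circ n} = f$; thus $r$ is the sought analytic $n$-th root.

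Conversely, suppose $f$ has an analytic antiholomorphic $n$-th root $r$. Then $r^{\circ 2}$ is holomorphic with $(r^{\circ 2})^{\circ n} = f^{\circ 2} = g$, and since $r$ commutes with $f$, $r^{\circ 2}$ commutes with $g$. Comparing leading terms in $(r^{\circ 2})^{\circ n} = g$ shows $r^{\circ 2}(z) = z + {1\over n} z^{k+1} + \cdots$, so in every Fatou coordinate of $g$ the map $r^{\circ 2}$ commutes with $T_1$ and differs from $T_{{1\over n}}$ by a term that is $O(|W|\inv)$; the Fourier-series argument of Proposition~\ref{prop:coord Fatou} then forces $r^{\circ 2} = T_{{1\over n}}$ in each Fatou coordinate. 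As $r^{\circ 2}$ sends $g$-orbits to $g$-orbits it descends to the space of orbits, so the chartwise formula $T_{{1\over n}}$ must be compatible with the gluing by the $\Psi_j$; this compatibility is precisely~\eqref{eq:Psi commute n}.

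The main point requiring care is the rigidity step: identifying the root unambiguously with the pure translation $\Sigma\circ T_{{1\over 2n}}$ (equivalently $r^{\circ 2}$ with $T_{{1\over n}}$) in the Fatou coordinates, with no surviving Fourier modes and no imaginary shift. This is where the oddness of $n$ is essential, since $(\Sigma\circ T_c)^{\circ 2} = T_{2\Re c}$ forces the translation constant to be real and hence pins $c = {1\over 2n}$; it is the exact analogue of the uniqueness half of Proposition~\ref{prop:racine v}. Once this is in place, the equivalence between the existence of $r$ and~\eqref{eq:Psi commute n} is just the statement that a chartwise translation descends to the space of orbits if and only if it commutes with the transition functions.
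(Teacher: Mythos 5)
Your proof is correct, and it rests on the same two pillars as the paper's (realization of the root on the glued Riemann surface of Theorem~\ref{theo:class}, plus rigidity in the Fatou coordinates), but the way you pass between existence of the root and the commutation relation~\eqref{eq:Psi commute n} is genuinely different. The paper encodes existence of the root as the compatibility relation $\Psi_j\circ\Sigma\circ T_{1\over 2n} = \Sigma\circ T_{1\over 2n}\circ\Psi_{-j}$ on the orbit space of $f\circ f$, and then converts it into~\eqref{eq:Psi commute n} arithmetically: combining with~\eqref{eq:Psi commute} gives commutation of $\Psi_j$ with $T_{{1\over 2n}-{1\over 2}}$, and a Bezout argument ($\gcd(1-n,2n)=2$) upgrades this to commutation with $T_{1\over n}$. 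You avoid the number theory in both directions: for sufficiency you use the factorization $\Sigma\circ T_{1\over 2n} = \STt\circ \bigl(T_{1\over n}\bigr)^{\circ{1-n\over 2}}$, where oddness of $n$ makes the exponent an integer, so that the root visibly descends to $S$; for necessity you pass to the holomorphic square $r^{\circ 2}$, identify it with $T_{1\over n}$ in each Fatou coordinate, and read off~\eqref{eq:Psi commute n} directly. That second step is in substance Lemma~\ref{lem:h Fatou} of the paper (stated there for centralizers), and it is the one place where your wording glosses over the real work: ``comparing leading terms'' gives only the coefficient of $z^{k+1}$, whereas the $O(|W|\inv)$ estimate needed to kill the Fourier modes requires agreement of $r^{\circ 2}$ with $v^{1\over n}$ through order $2k+1$ (a formal-centralizer fact, as in Lemma~\ref{lem:F-STt}); alternatively, invoke Lemma~\ref{lem:h Fatou} to get $r^{\circ 2}=T_C$ in the Fatou coordinates and then $C={1\over n}$ from $(T_C)^{\circ n}=T_1$, with no asymptotics needed to pin the constant. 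A second small elision: your $r$ satisfies $r^{\circ n} = H\circ\STt\circ H\inv$, which is analytically conjugate to $f$ rather than equal to it, so one must conjugate back (root existence being conjugacy-invariant) — an elision the paper makes as well. Neither point is a genuine gap. What your route buys is a transparent role for the oddness of $n$ and no Bezout identity; what the paper's buys is a single compatibility criterion on the orbit space from which both implications follow by pure manipulation of the transition relations.
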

\begin{proof}
  That $f$ has a unique formal $n$-th root follows
  from Proposition~\ref{prop:racine v}.

  For the second part, $f$ has an antiholomorphic $n$-th root if
  and only if the transition functions satisfy 
  $\Psi_{j} \circ \Sigma\circ T_{1\over 2n} = \Sigma\circ T_{1\over 2n}\circ \Psi_{-j}$,
  since we may realize $\Sigma\circ T_{1\over 2n}$ on the Riemann surface
  of $f\circ f$. We combine this last equation with \eqref{eq:Psi commute}
  to obtain 
  $\Psi_{j}\circ T_{{1\over 2n}-{1\over2}} = T_{{1\over 2n}-{1\over2}}\circ\Psi_{j}$.
  To conclude, we note that $\gcd(1-n, 2n)=2$ because $n$ is odd, so
  there exists $p,q\in\Z$ such that $(1-n)p - 2nq = 2$. In other words,
  we have ${1\over n} = {(1-n)\over 2n}p - q$. Since $\Psi_{j}$ commutes with
  $T_{{(1-n)\over 2n}p}$ and with $T_q$ (because $q$ is an integer), it
  follows that $\Psi_{j}$ commutes with $T_{1\over n}$.
\end{proof}

\begin{cor}
  $f$ has an antiholomorphic $n$-th root for $n$ odd if and only if
  $f$ is the square root of $g$ and $g$ has a holomorphic
  $n$-th root, with $n$ odd.
\end{cor}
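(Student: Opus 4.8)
The standing setup is $g = f\circ f$, so the clause ``$f$ is the square root of $g$'' holds automatically; the real content of the statement is the equivalence between $f$ admitting an antiholomorphic $n$-th root and $g = f\circ f$ admitting a holomorphic $n$-th root, for $n$ odd. I would prove the two implications separately, handling the forward one by a direct composition argument and the converse through the modulus.

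For the forward implication I would argue directly at the level of germs. If $\phi$ is an antiholomorphic $n$-th root of $f$, that is $\phi^{\circ n} = f$ with $n$ odd, then $\phi^{\circ 2}$ is holomorphic (an even number of antiholomorphic factors) and
$$
  (\phi^{\circ 2})^{\circ n} = \phi^{\circ 2n} = (\phi^{\circ n})^{\circ 2} = f\circ f = g,
$$
so that $\phi^{\circ 2}$ is a holomorphic $n$-th root of $g$. This direction needs nothing beyond the definitions.

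For the converse I would route through the transition functions. By Theorem~\ref{theo:racines f}, $f$ has an antiholomorphic $n$-th root (for $n$ odd) precisely when its transition functions satisfy the commutation relation~\eqref{eq:Psi commute n}, namely $\Psi_j\circ T_{1/n} = T_{1/n}\circ \Psi_j$ for $j=1,\ldots,k$. On the holomorphic side, the classical \'Ecalle--Voronin root-extraction criterion---established by the same Fourier argument as in the proof of Theorem~\ref{theo:racines f}, since a holomorphic $n$-th root is represented by $T_{1/n}$ in every Fatou coordinate and is globally consistent exactly when the transition maps commute with $T_{1/n}$---states that $g$ has a holomorphic $n$-th root if and only if all $2k$ of its transition functions commute with $T_{1/n}$. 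The bridge between the two criteria is Proposition~\ref{prop:Psi commute}: the transition functions $\Psi_{\pm 1},\ldots,\Psi_{\pm k}$ of $f$ are simultaneously the transition functions of $g = f\circ f$. It then remains to check that commutation of $\Psi_1,\ldots,\Psi_k$ with $T_{1/n}$ propagates to all $2k$ of them. Using~\eqref{eq:Psi moins}, which gives $\Psi_{-j} = \Sigma\circ \Tt\circ \Psi_j\circ \Sigma\circ \mTt$, together with the fact that $T_{1/n}$ commutes with $\Sigma$ (because $1/n$ is real) and with $\Tt$ and $\mTt$, one obtains $\Psi_{-j}\circ T_{1/n} = T_{1/n}\circ \Psi_{-j}$ from the corresponding relation for $\Psi_j$. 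Hence the two criteria impose the very same condition on the common family of transition functions, and the converse follows.

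The main obstacle is not computational but organizational: one must pin down the holomorphic root-extraction criterion cleanly and confirm that commutation over the positive indices spreads to all $2k$ transition functions via~\eqref{eq:Psi moins}. Both points are short once it is recognized that $f$ and $g = f\circ f$ share one and the same family of transition functions, which is precisely the observation that makes the corollary a formal consequence of Theorem~\ref{theo:racines f} and Proposition~\ref{prop:Psi commute}.
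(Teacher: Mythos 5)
Your proposal is correct and takes essentially the same route as the paper: its proof likewise rests on the equivalence (cited from the \'Ecalle--Voronin theory in \cite{nonlinear}) between~\eqref{eq:Psi commute n} and holomorphic $n$-th root extraction for $g$, combined with Theorem~\ref{theo:racines f} and the fact that $f$ and $g=f\circ f$ share their transition functions, while the direction you prove by the composition $(\phi^{\circ 2})^{\circ n}=g$ is exactly what the paper dismisses as ``direct.'' Your additional check that commutation with $T_{1/n}$ propagates to the negative indices via~\eqref{eq:Psi moins} is a detail the paper leaves implicit but does not change the approach.
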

\begin{proof}
  Equation~\eqref{eq:Psi commute n} is independent of the representative and
  it is equivalent to a holomorphic parabolic germ having a holomorphic $n$-th root 
  (see~\cite{nonlinear}).
  
  Suppose $g = f\circ f$ and $g$ has a holomorphic $n$-th root. Then
  the modulus of $g$ and $f$ satisfies~\eqref{eq:Psi commute n},
  so that $f$ has an antiholomorphic $n$-th root.

  The converse is direct.
\end{proof}
%

\subsection{Germs with an Invariant Real Analytic Curve}\label{sec:forme reelle}
An antiholomorphic germ with real coefficients preserves the real axis. 
Any germ $f$ analytically conjugate to the latter
will preserve a real analytic curve; it is a
property of the equivalence class of $f$.
Therefore, Question~\ref{Q:reel} is equivalent to asking when
does an antiholomorphic parabolic germ $f$ preserve a real analytic curve.

Like the embedding problem, to preserve a germ
of real axis is a condition of codimension infinity,
but it is a ``smaller'' infinity, i.e.\null{} not
every transition functions needs to be a translation,
but we will see that there are infinitely many conditions of the form
$c_{n,j} = \overline{c_{-n,-j}}$ for the Fourier coefficients
in~\eqref{eq:Psi fourier}.

\begin{theo}\label{theo:f reel}
  Let $f\colon (\Cp,0)\to (\Cp,0)$ be an antiholomorphic
  parabolic germ and $(k,b,[\Psi_1,\ldots,\Psi_k])$ be
  its modulus of classification.
  The following	statements are equivalent
  \begin{enumerate}
    \item $f$ preserves a real analytic curve at the
    origin;\vskip2pt
    \item $f$ is analytically conjugate to a germ
    with real coefficients;\vskip2pt
    \item each representative $(\Psi_1,\ldots,\Psi_k)$
    satisfies $\Psi_j\circ \Tt = \Tt \circ \Psi_j$;\vskip2pt
    \item each representative $(\Psi_1,\ldots,\Psi_k)$ satisfies
    $\Psi_j\circ \Sigma = \Sigma\circ \Psi_{-j}$,
    where $\Psi_{-j}$ is defined by~\eqref{eq:Psi moins}.
  \end{enumerate}
\end{theo}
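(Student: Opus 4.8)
The plan is to prove $(1)\Leftrightarrow(2)$ and the algebraic equivalence $(3)\Leftrightarrow(4)$ by elementary means, and to reserve the analytic content for the pair $(2)\Leftrightarrow(4)$. For $(2)\Rightarrow(1)$, a germ $\fdag$ with real coefficients preserves $\R$, so any $f=h\circ\fdag\circ h\inv$ preserves the real analytic curve $h(\R)$. For $(1)\Rightarrow(2)$, I would parametrize a germ of invariant real analytic curve $\gamma$ by a real analytic map $t\mapsto\gamma(t)$ with $\gamma(0)=0$, complexify $t$ to obtain a holomorphic change of coordinate carrying $\R$ onto $\gamma$, and note that in this coordinate $f$ preserves $\R$; an antiholomorphic germ $z\mapsto\sum a_n\zbar^n$ sending real points to real points forces every $a_n$ to be real, by the identity principle. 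For the algebraic equivalence, recall that $\Tt=T_{\scriptscriptstyle 1/2}$ translates by a real number while $\Sigma$ is the complex conjugation on the time coordinate, so $\Sigma$ and $\Tt$ commute. Substituting $\Psi_{-j}=\STt\circ\Psi_j\circ\Sigma\circ\mTt$ from~\eqref{eq:Psi moins} into~$(4)$ and using $\Sigma\circ\Sigma=id$ turns $(4)$ into $\Psi_j\circ\Sigma=\Tt\circ\Psi_j\circ\Sigma\circ\mTt$; composing on the right with $\Tt$ and commuting $\Sigma$ past $\Tt$ gives $\Psi_j\circ\Tt=\Tt\circ\Psi_j$, which is $(3)$, and every step is reversible.

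For $(2)\Rightarrow(4)$, I would assume $f$ has real coefficients, so $f\circ\sigma=\sigma\circ f$ and, in the time coordinate, $F\circ\Sigma=\Sigma\circ F$. Conjugating the defining relation $\Phi_j\circ F\circ\Phi_{-j}\inv=\STt$ by $\Sigma$ and using $\Sigma\circ\STt\circ\Sigma=\STt$ shows that $\Sigma\circ\Phi_j\circ\Sigma$ is again a Fatou coordinate on $U_{-j}$. Invoking the uniqueness in Proposition~\ref{prop:coord Fatou} together with the $\Sigma$-equivariance of the pairing (which one checks on the normal form, where every $\Phi_j=id$ and $\STt=\Sigma\circ\Tt$ commutes with $\Sigma$), one may choose the Fatou coordinates so that $\Phi_{-j}=\Sigma\circ\Phi_j\circ\Sigma$. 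Reading Definition~\ref{def:Psi} for the indices $\pm j$ and cancelling the inner $\Sigma\circ\Sigma$ then yields $\Psi_{-j}=\Sigma\circ\Psi_j\circ\Sigma$, that is~$(4)$. Since~$(4)$ is preserved under the equivalence~\eqref{eq:relation} (as the real translation $T_C$ commutes with $\Sigma$), it then holds for every normalized representative.

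For $(4)\Rightarrow(2)$, I would run the realization argument of the Classification Theorem~\ref{theo:class}(2). Relation~$(4)$, rewritten as $\Sigma\circ\Psi_j=\Psi_{-j}\circ\Sigma$, is exactly the compatibility that lets $\Sigma$ descend to a well-defined antiholomorphic involution of the abstract surface $S=\bigsqcup_j U_j/\!\sim$: it carries the gluing $W_j\sim\Psi_j(W_j)$ between $U_j$ and $U_{j-1}$ to the gluing $\Sigma(W_j)\sim\Psi_{-j}(\Sigma(W_j))$ between $U_{-j}$ and $U_{-j+1}$. Transporting by the biholomorphism $H\colon S\to(\Cp,0)$ gives an antiholomorphic involution $H\circ\Sigma\circ H\inv$ fixing the origin; since any antiholomorphic involution fixing $0$ is analytically conjugate to $\sigma$ (shown earlier), I may, after precomposing $H$ with the conjugating map, assume $H\circ\Sigma\circ H\inv=\sigma$. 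The constructed germ $f=H\circ\STt\circ H\inv$ then satisfies $\sigma\circ f=f\circ\sigma$, because $\Sigma\circ\STt=\STt\circ\Sigma$ on $S$ — both compositions equal $\Tt$ on the symmetric chart $U_0$, and the equality propagates by the identity principle — so $f$ has real coefficients; the Classification Theorem finishes by identifying the original $f$ with this germ.

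The delicate point is $(2)\Rightarrow(4)$: one must verify that the translation normalizations of Proposition~\ref{prop:coord Fatou} can be arranged \emph{symmetrically}, so that the $\Sigma$-image of a Fatou-coordinate system coincides with the original after an allowed change and $\Phi_{-j}=\Sigma\circ\Phi_j\circ\Sigma$ holds on the nose rather than up to an extra translation. Tracking these constants through the special charts $U_0$ and $U_k$ (where they are forced to be real) and through the odd/even index cases of Definition~\ref{def:Psi} is the bookkeeping that makes the argument rigorous, and is where I expect the main effort to lie.
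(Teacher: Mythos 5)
Your argument is correct, but it takes a genuinely different route from the paper's in two of the four implications. The paper proves the cycle $1\Rightarrow 2\Rightarrow 3\Leftrightarrow 4\Rightarrow 1$: your treatment of $1\Rightarrow 2$ and of $3\Leftrightarrow 4$ coincides with it, but the remaining two steps differ. For $2\Rightarrow 3$ the paper simply observes that when $f$ has real coefficients, $\sigma\circ f$ is a \emph{holomorphic} square root of $g=f\circ f$, and then quotes the classical criterion that a holomorphic parabolic germ admits a holomorphic square root if and only if its transition functions commute with $\Tt$; your $2\Rightarrow 4$ instead symmetrizes the Fatou coordinates directly, by conjugating the pairing $\Phi_j\circ F\circ\Phi_{-j}\inv=\STt$ by $\Sigma$ and invoking the uniqueness clause of Proposition~\ref{prop:coord Fatou}. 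This is self-contained within the paper's own machinery, whereas the paper leans on the holomorphic root theory of \cite{nonlinear}; moreover, the ``delicate point'' you flag at the end actually dissolves: since that uniqueness statement pairs the constants as $(T_{C_j},T_{\overline C_j})$, applying it to the system $\bigl(\Sigma\circ\Phi_{-j}\circ\Sigma,\,\Sigma\circ\Phi_j\circ\Sigma\bigr)$ yields $\Phi_j=T_{C_j}\circ\Sigma\circ\Phi_{-j}\circ\Sigma$ and $\Phi_{-j}=T_{\overline C_j}\circ\Sigma\circ\Phi_j\circ\Sigma$ simultaneously, and combining the two forces $T_{2\overline C_j}=id$, so $C_j=0$ and the symmetric normalization $\Phi_{-j}=\Sigma\circ\Phi_j\circ\Sigma$ holds on the nose, with no choice to arrange. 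In the converse direction, the paper's $4\Rightarrow 1$ does genuine geometric work after transporting $\Sigma$ and $\STt$ to the disk: the fixed axes of $\Sigma$ in the charts $U_0$ and $U_k$ form a $C^1$ curve through the origin, which is proved to be real analytic by a Riemann mapping plus Morera (Schwarz-reflection) argument. You bypass all of this by citing the paper's earlier proposition that an antiholomorphic germ with $f\circ f=id$ is analytically conjugate to $\sigma$: straightening the involution $H\circ\Sigma\circ H\inv$ to $\sigma$ makes the realized germ commute with $\sigma$, hence have real coefficients, and part~1 of the Classification Theorem~\ref{theo:class} identifies $f$ with it, so you land on statement~2 and recover statement~1 by the easy implication. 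This is a legitimate and arguably more economical shortcut, since the needed geometry (an invariant curve for an antiholomorphic involution, via the implicit function theorem) is already encapsulated in that proposition; what the paper's longer argument buys is an explicit description of the invariant curve of $f$ as the union of the two fixed semi-axes coming from the charts $U_0$ and $U_k$, which matches the geometric picture of the formal symmetry axis developed in Section~\ref{sec:espace des orbites f}.
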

\begin{proof}
  \noindent1.\null{} $\Rightarrow$ 2. Let $\gamma$ be
  the germ of real analytic parametrization of the invariant
  curve of $f$. We can extend $\gamma$ on a disk around the
  origin. Then $\gamma\inv\circ f\circ \gamma$ fixes
  a germ of the real axis, so its power series has
  real coefficients.

  \smallskip
  \noindent 2.\null{} $\Rightarrow$ 3.\null{} 
  We can of course suppose that the power series
  of $f$ has real coefficients, so that $\sigma\circ f = f\circ \sigma$. 
  It follows that $\sigma\circ f$ is a holomorphic square root of 
  $g = f\circ f$. Therefore, we have $\Psi_j\circ \Tt = \Tt \circ \Psi_j$.
  
  \smallskip
  \noindent3.\ $\Leftrightarrow$ 4. It follows 
  from equation~\eqref{eq:Psi commute}.

  \smallskip
  \noindent4.\ $\Rightarrow$ 1. This is the harder part 
  of the proof. The hypothesis implies that $\Sigma$
  is well defined on the Riemann surface $S$ constructed
  in the proof of part 2 of the Classification Theorem~\ref{theo:class}.
  Let $z = H(W)$ be the coordinate given by $H\colon S\to (\Cp,0)\setminus\{0\}$,
  the biholomorphism of $S$ to a punctured neighbourhood
  of the origin. Let $\sigma'$ and $f'$ be the expression
  of $\Sigma$ and $\STt$ in this coordinate (note that
  $f$ and $f'$ are analytically conjugate).
  Since on $S$, $\Sigma$ and $\Sigma\circ\Tt$ commute,
  it follows that $\sigma'$ and $f'$ commute also.
  So it remains only to show that $\sigma'$ preserves
  a germ of real analytic curve at the origin.

  We can extend $\sigma'$ by $\sigma'(0) = 0$ by
  the Riemann Removable Singularity Theorem (this is true
  for antiholomorphic functions, since we simply apply
  it to $\sigma'\circ \sigma$, where $\sigma(z) = \zbar$).
  In the charts $U_0$ and $U_k$, $\Sigma$ fixes
  the real axis. These two curves
  carry in the $z$-coordinate and meet at the
  origin to form a continuous curve $\gamma$ fixed by $\sigma'$.
  In fact, $\gamma$ is a $C^1$ curve, as $H$ can be
  extended to a $C^1$ diffeomorphism $\widetilde H\colon \widetilde S\to (\Cp,0)$,
  where $\widetilde S$ is obtained from $S$ with
  the point $\infty = H\inv(0)$ added.

  This curve divides a small disk $D(0,\delta)$ in two
  connected components $A$ and $B$. By the Riemann
  Mapping Theorem, there exists a biholomorphism $\varphi$
  of $A$ to the upper half-plane that sends continuously
  the boundary of $A$ on the real line, see Figure~\ref{fig:prolonge}.
  The image of  $\gamma$ corresponds to an interval $[a,b]$. We
  can extend $\varphi$ to $A\cup\gamma\cup B$ by 
  $$
    \widetilde\varphi(z) =\begin{cases}
	\varphi(z) & \hbox{if } z\in A\cup\gamma;\\
	\sigma\circ\varphi\circ\sigma'(z) & \hbox{if } z\in B.
    \end{cases}
  $$
  This is holomorphic on $B$, since it is the composition
  of a holomorphic map with two antiholomorphic maps, and
  it is continuous on $A\cup \gamma\cup B$. 
  
  \begin{figure}[htbp]
    \includegraphics{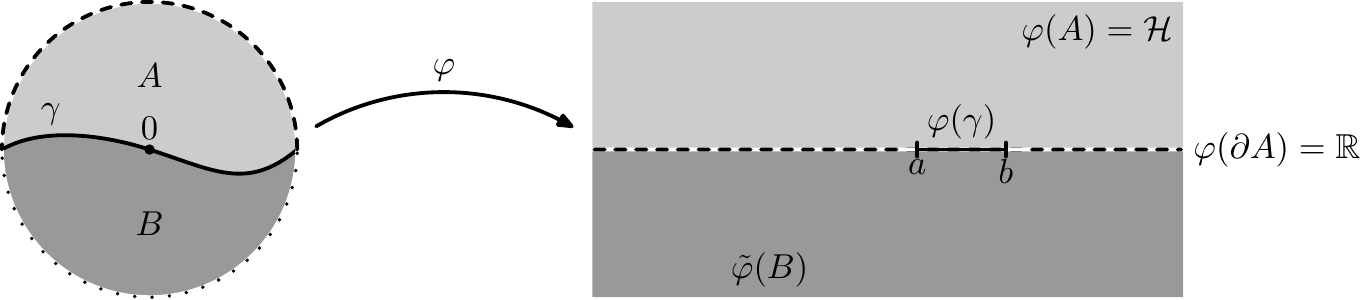}
    \caption{Mapping $\varphi$ extended on $A\cup\gamma\cup B$}
    \label{fig:prolonge}
  \end{figure}
  
  The argument
  to prove that $\widetilde\varphi$ is holomorphic is
  similar to that of the Schwarz Reflection Principle.
  The idea is to show that the integral of $\widetilde\varphi$
  along any triangle in $A\cup \gamma\cup B$ vanishes,
  and it will follow from Morera's theorem. If a simple closed curve
  is in $A\cup\gamma$ or $B\cup \gamma$, then it follows
  from Cauchy's theorem (we may take a limit of closed curves in $A$ or $B$
  converging to the initial one). Then, we can divide any
  triangle in $A\cup\gamma\cup B$ along $\gamma$ to obtain
  a finite number of closed curves in $A\cup \gamma$ and in $B\cup\gamma$,
  as in Figure~\ref{fig:tri}.
  Thus, $\widetilde\varphi\big|_{(a,b)}\inv$ is a
  real analytic parametrization of $\gamma$.
  \begin{figure}[htbp]
    \includegraphics{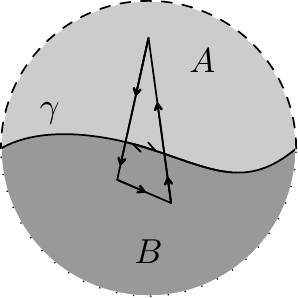}
    \caption{Triangle divided in two closed curves}
    \label{fig:tri}
  \end{figure}
\end{proof}

When $f$ is conjugate to a germ with real coefficients,
then so is the holomorphic germ $g = f\circ f$.
However, it is not true that every holomorphic germ
analytically conjugate to a germ with real coefficients
must have a germ $f$ such that $g = f\circ f$, as the next theorem will show
(see also Proposition~\ref{prop:g sans racine}).
Even though this is not a property directly linked to
the antiholomorphic parabolic germs, we will still prove
the following necessary and sufficient condition
for a holomorphic parabolic germ to preserve a
real analytic curve.

\begin{theo}\label{theo:g reel}
  Let $g\colon(\Cp,0) \to (\Cp,0)$ be a parabolic holomorphic
  germ and let $(k,b,[\Psi_{-k},\ldots,\Psi_{-1},\Psi_1,\ldots\Psi_k])$
  be its Écalle-Voronin modulus (see Section~\ref{rem:module Ecalle}).
  The following statements are equivalent
  \begin{enumerate}
    \item $g$ preserves a germ of real analytic curve 
    at the origin;
    \item $g$ is analytically conjugate to a germ 
    with real coefficients;
    \item For each representative $(\Psi_{-k},\ldots,\Psi_{-1},\Psi_1,\ldots,\Psi_k)$,
    there exists $y\in\R$ such that the transition functions
    satisfy $\Sigma\circ T_{iy}\circ \Psi_{j} = \Psi_{-j}\circ \Sigma\circ T_{iy}$;
    \item There exists a representative 
    $(\Psi_{-k},\ldots,\Psi_{-1},\Psi_1,\ldots,\Psi_k)$
    such that 
    $\Psi_{j}\circ \Sigma = \Sigma\circ \Psi_{-j}$.
  \end{enumerate}
\end{theo}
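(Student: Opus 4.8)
The plan is to prove the cycle $1\Rightarrow 2\Rightarrow 3\Rightarrow 4\Rightarrow 1$, obtaining the equivalence $3\Leftrightarrow 4$ along the way. The argument runs parallel to Theorem~\ref{theo:f reel}; the only genuinely new bookkeeping is the appearance of the real height $y$, which comes from the larger translation freedom $T_C$, $C\in\Cp$, available in the holomorphic modulus~\eqref{eq:relation g}. The implication $1\Rightarrow 2$ is immediate: if $\gamma$ is a real analytic parametrization of the invariant curve, extend it to a germ of biholomorphism and note that $\gamma\inv\circ g\circ\gamma$ fixes a germ of the real axis, hence has real coefficients.

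For $2\Rightarrow 3$ I would assume $g$ has real coefficients, so that $\sigma\circ g=g\circ\sigma$ and $\sigma$ is an antiholomorphic involution of a punctured neighbourhood of the origin commuting with $g$ and permuting the petals by $\sigma(S_j)=S_{-j}$ (with $S_0$ and $S_k$ preserved). Thus $\sigma$ descends to an antiholomorphic involution of the space of orbits commuting with the $T_1$-action; in the time coordinate it sends $U_j$ to $U_{-j}$, and since $\Sigma\circ G\circ\Sigma=G$ and $\Sigma\circ T_1\circ\Sigma=T_1$, the map $\Sigma\circ\Phi_{-j}\circ\Sigma$ is again a Fatou coordinate on $U_j$, so by the uniqueness of the Fatou coordinates in the holomorphic case (see \cite{nonlinear} and \cite{LectDiffEq}) it differs from $\Phi_j$ by a translation. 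Reading off the induced involution of the cylinders in a fixed representative, it appears as $\Sigma\circ T_{iy}$ for a single real height $y$ (the height of its fixed axis in that representative), which is exactly the relation $\Sigma\circ T_{iy}\circ\Psi_j=\Psi_{-j}\circ\Sigma\circ T_{iy}$ of statement~3.

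The equivalence $3\Leftrightarrow 4$ is then a direct computation using $\Sigma\circ T_a=T_{\overline a}\circ\Sigma$. Rewriting statement~3 as $\Sigma\circ\Psi_j\circ\Sigma=T_{iy}\circ\Psi_{-j}\circ T_{-iy}$ and changing the representative by $\Psi_j\mapsto T_{iy/2}\circ\Psi_j\circ T_{-iy/2}$ (allowed by~\eqref{eq:relation g}) turns it into $\Sigma\circ\Psi_j\circ\Sigma=\Psi_{-j}$, which is statement~4; conversely, starting from a representative satisfying statement~4 and applying an arbitrary $T_C$ reintroduces the height $y=-2\Im C$, so that statement~3 holds for every representative. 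This absorbs the real part of $C$ into the harmless relation~\eqref{eq:relation g}, while its imaginary part merely tunes the height of the axis.

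The core of the theorem is $4\Rightarrow 1$, and here I would reuse the construction in the proof of Theorem~\ref{theo:f reel} almost verbatim. Statement~4 says precisely that $\Sigma$ is compatible with the gluings $\Psi_j$, hence descends to an antiholomorphic involution of the abstract orbit surface $S$ built, as in part~2 of the Classification Theorem~\ref{theo:class}, by gluing the charts $U_j$ through the $\Psi_j$; moreover $\Sigma$ commutes with the well-defined map $T_1$ on $S$, since $\Sigma\circ T_1\circ\Sigma=T_1$. Transporting through the biholomorphism $H\colon S\to(\Cp,0)\setminus\{0\}$ yields an antiholomorphic involution $\sigma'=H\circ\Sigma\circ H\inv$ of a punctured neighbourhood of the origin which extends across $0$ and commutes with $g=H\circ T_1\circ H\inv$. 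Its fixed set is assembled from the real axes of the two charts $U_0$ and $U_k$, on which $\Sigma$ acts by ordinary conjugation; these carry over to two arcs meeting at the origin to form a $C^1$ curve $\gamma$, and the Schwarz-reflection/Morera argument of Theorem~\ref{theo:f reel} upgrades $\gamma$ to a real analytic curve. Since $g$ commutes with $\sigma'$, it maps $\mathrm{Fix}(\sigma')=\gamma$ to itself, giving statement~1. The main obstacle is exactly this last step, the verification that the continuous fixed curve is genuinely real analytic; but it is identical to the corresponding step of Theorem~\ref{theo:f reel} and can therefore be cited rather than repeated.
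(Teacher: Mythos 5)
Your proposal is correct and follows essentially the same route as the paper's proof: $1\Rightarrow 2$ by conjugating with the curve's parametrization, $2\Rightarrow 3$ via uniqueness of Fatou coordinates applied to $\Sigma\circ\Phi_{-j}\circ\Sigma$ with the involution property forcing a purely imaginary constant, $3\Leftrightarrow 4$ by shifting the representative with $T_{iy/2}$ under Relation~\eqref{eq:relation g}, and $4\Rightarrow 1$ by reusing the realization-plus-Schwarz-reflection/Morera argument of Theorem~\ref{theo:f reel}. The paper organizes these as $1\Rightarrow 2$, $2\Rightarrow 3$, $3\Leftrightarrow 4$, $4\Rightarrow 1$ rather than as your explicit cycle, but the content of each step is the same.
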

\begin{proof}
  1.\null{} $\Rightarrow$ 2.\null{} and 4.\null{} $\Rightarrow$ 1.\null{} 
  are the same as in the previous theorem.

  2.\null{} $\Rightarrow$ 3. Suppose $g$ is in a coordinate
  such that $g = \sigma\circ g\circ \sigma$. Let $\Phi_j$
  be a Fatou coordinate on $U_j$ for $-k \leq j\leq k$,
  where $\Phi_{-k} = \Phi_k$. Then $\Sigma\circ \Phi_{-j}\circ \Sigma$
  is also a Fatou coordinate of $g$ on $U_j$. By the uniqueness
  of the Fatou coordinate (for the holomorphic case), there
  exists a constant $C\in \Cp$ such that 
  $$
    \Sigma\circ \Phi_{-j}\circ\Sigma\circ \Phi_j\inv = T_C
  $$
  for all $j$. In particular, for $j=0$, by taking the
  inverse and conjugating both sides by $\Sigma$ of the previous equation, we obtain
  $T_C = T_{-\overline C}$, so $C$ must be pure imaginary, say $iy$ with $y\in\R$.
  For $j>0$ and odd, we conclude with\footnotesize
  $$
    \Psi_j = \Phi_j\circ \Phi_{j-1}\inv = T_{-iy}\circ \Sigma \circ \Phi_{-j}
      \circ \Phi_{-j+1}\inv\circ \Sigma\circ T_{iy}
      = T_{-iy}\circ \Sigma \circ \Psi_{-j} \circ \Sigma\circ T_{iy}.
  $$\normalsize
  The other values of $j$ are done similarly.

  \noindent3.\null{} $\Leftrightarrow$ 4. It follows from
  the fact that for any two representatives 
  $(\Psi_{-k},\ldots,\Psi_{-1},\Psi_1,\ldots,\Psi_k)$
  and $(\Psi_{-k}',\ldots,\Psi_{-1}',\Psi_1',\ldots,\Psi_k')$,
  there exists $C\in\Cp$ such that $\Psi_j\circ T_C = T_C \circ \Psi_j'$.
  We choose $C = -{iy\over 2}$ to get a representative
  satisfying $\Psi_j'\circ \Sigma = \Sigma\circ \Psi_{-j}'$.
\end{proof}

\begin{prop}\label{prop:g sans racine}
  There exists $g$ with real coefficients
  that has no antiholomorphic square root 
  (see section~\ref{sec:racine}).
\end{prop}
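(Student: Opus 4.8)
The plan is to exhibit the obstruction in codimension~$1$, where the analysis is cleanest: there the only formal symmetry axis is the real axis, so by part~1 of the Antiholomorphic Root Extraction Problem any antiholomorphic square root of $g$ must be tangent to it, and it suffices to rule out the case $\ell=0$. First I would reduce both hypotheses — that $g$ has real coefficients and that $g$ admits an antiholomorphic square root — to symmetry conditions on the horn map $\Psi_1$. By Theorem~\ref{theo:g reel}, $g$ having real coefficients is equivalent to the existence of a representative with $\Psi_j\circ\Sigma = \Sigma\circ\Psi_{-j}$, while part~2 of the Root Extraction Problem (with $n=2$ and $\ell=0$, so that $s_0(j)=-j$) says $g$ has an antiholomorphic square root if and only if some representative satisfies~\eqref{eq:racine existe}, which here reads $\Psi_j\circ\STt = \STt\circ\Psi_{-j}$. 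The two conditions differ only by the half-translation built into $\STt=\Sigma\circ\Tt$, and it is exactly this half-translation that will produce the obstruction.

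Next I would pass to Fourier coefficients. Writing $\Psi_1(W) = W + c_1 + \sum_{n\ge 1}a_n e^{2i\pi nW}$ and $\Psi_{-1}(W) = W + c_{-1} + \sum_{n\ge1}b_n e^{-2i\pi nW}$ as in~\eqref{eq:Psi fourier}, and using $\STt(W)=\overline W + {1\over2}$, a direct substitution shows that $\Psi_1\circ\Sigma = \Sigma\circ\Psi_{-1}$ is equivalent to $a_n = \overline{b_n}$ for all $n$, whereas $\Psi_1\circ\STt = \STt\circ\Psi_{-1}$ is equivalent to $(-1)^n a_n = \overline{b_n}$; the only difference is the factor $(-1)^n$ coming from $e^{\pi i n}$. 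In codimension~$1$ the only freedom in the choice of representative is conjugation by $T_C$ (the rotation group is trivial), and such a conjugation multiplies $a_n$ by $e^{-2i\pi nC}$ and $b_n$ by $e^{2i\pi nC}$. Fixing the real-normalized representative of a real germ $g$, so that $a_n=\overline{b_n}$, and then asking whether any conjugate satisfies the square-root condition reduces, for each $n$ with $a_n\neq 0$, to the single scalar equation $(-1)^n = e^{-4\pi n\,\Im C}$.

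Finally I would conclude. For odd $n$ the right-hand side is a positive real number while the left-hand side equals $-1$, so the equation has no solution; hence if the horn map of a real germ $g$ has any nonzero odd Fourier coefficient, then $g$ admits no antiholomorphic square root tangent to the real axis, and therefore none at all. It remains to produce such a $g$: by the holomorphic realization theorem (Écalle-Voronin) I may prescribe a codimension~$1$ modulus with $b\in\R$, with $a_1\neq 0$, and with $b_n:=\overline{a_n}$ so that the reality condition $\Psi_j\circ\Sigma=\Sigma\circ\Psi_{-j}$ holds; the resulting germ is analytically conjugate to one with real coefficients by Theorem~\ref{theo:g reel} and has no antiholomorphic square root. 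The step I expect to require the most care is the bookkeeping in the second paragraph — verifying the appearance of the factor $(-1)^n$ and confirming that, among all representatives, only $\Im C$ enters the square-root equation — since once that is pinned down the conclusion is an immediate parity obstruction.
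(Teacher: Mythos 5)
Your proof is correct and takes essentially the same approach as the paper: the paper simply exhibits the modulus $(1,0,[W+e^{2i\pi W},\,W+e^{-2i\pi W}])$ --- which is precisely the special case $b=0$, $a_1=b_1=1$, $a_n=b_n=0$ for $n\geq 2$ of your construction --- and asserts without computation that it is real-symmetric but admits no antiholomorphic square root. Your Fourier-coefficient parity argument (the factor $(-1)^n$ against $e^{-4\pi n \Im C}$, with only $\Im C$ available as freedom in codimension~$1$) is exactly the verification the paper leaves implicit, so you have supplied the omitted details rather than a different method.
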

\begin{proof} 
  The holomorphic germ realized by the Écalle-Voronin
  modulus $(1,0,[W + e^{2i\pi W}, W + e^{-2i\pi W}])$
  has no antiholomorphic square root, but preserves a germ of real analytic curve.
\end{proof}

\subsection{Centralizer in the Group of Holomorphic and Antiholomorphic Germs}
Let $\Diff_1(0,\Cp)$ (resp.\null{} $\overline{\Diff_1(0,\Cp)}$)
be the set of germs of holomorphic (resp.\null{} antiholomorphic)
diffeomorphisms with a fixed point at the origin with multiplier 1.
Note in particular that an antiholomorphic germ must be tangent to
$\sigma$. We set
$$
  \DiffC = \Diff_1(0,\Cp)\cup \overline{\Diff_1(0,\Cp)}.
$$
It forms a group with $\Diff_1(0,\Cp)$ as a subgroup.
Next, let $\joli A_{k,b}\subset\Diff_1(0,\Cp)$ (resp.\null{} 
$\overline{\joli A_{k,b}}\subset\overline{\Diff_1(0,\Cp)}$ when $b$ is real)
be the set of germs of holomorphic (resp.\null{} antiholomorphic)
diffeomorphisms with a parabolic fixed point of codimension~$k$ at the origin
and with formal invariant $b$. 

We will study the centralizer of $g\in \joli A_{k,b}$ and
of $f\in \overline{\joli A_{k,b}}$ in $\DiffC$. 
Let us note the following: if $g\in\joli A_{k,b}$ commutes
with $f\in\overline{\Diff_1(0,\Cp)}$, then the formal 
invariant $b$ of $g$ is automatically real. In fact, 
we have that $g$ and $\sigma\circ g\circ \sigma$ are
analytically conjugate by means of $(\sigma\circ f)$.
So we are interested only in $b$ real, since if $b$
is not real, the centralizer of $g$ in $\DiffC$ is
the same as the centralizer of $g$ in $\Diff(0,\Cp)$,
which is already known (see \cite{nonlinear}).

\begin{lem}\label{lem:h Fatou}
  Let $g\in \joli A_{k,b}$ and let $m\in\DiffC$ be a germ that commutes with
  $g\in\joli A_{k,b}$. Then either $m$ is the identity, 
  or analytically conjugate to $\sigma$, or $m\in\AAkb$.
  Moreover, in the Fatou coordinates of $g$, $m$
  will be of the form
  \begin{equation}\label{eq:alternative h}
    T_C \qquad\hbox{ or }\qquad \Sigma\circ T_C
    \qquad\rlap{\qquad for $C\in\Cp$.}
  \end{equation}
\end{lem}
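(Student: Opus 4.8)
The plan is to exploit that commuting with $g$ forces $m$ to descend to a self-map of each Écalle sphere, and then to invoke the rigidity of $\Pp^1$ to pin down its form; the trichotomy is then read off from the resulting Fatou expression.

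First I would determine the $1$-jet of $m$. Since $m$ fixes $0$ and $g(z)=z+z^{k+1}+\cdots$, comparing the coefficient of order $k+1$ in $m\circ g=g\circ m$ shows that the linear coefficient $a$ of a holomorphic $m$ satisfies $a=a^{k+1}$, i.e.\ $a^k=1$, and likewise the antiholomorphic coefficient $c$ in $m(z)=c\zbar+\cdots$ satisfies $c^k=1$. Hence the linear part of $m$ is a symmetry of the normal form: a rotation of order $k$ (holomorphic case) or a reflection $\sigma_\ell(z)=e^{2i\pi\ell/k}\zbar$ (antiholomorphic case). In particular $m$ carries each petal $S_j$ of $g$ onto a petal, inducing a permutation $\rho$ of the indices (a shift when $m$ is holomorphic, an index reflection when $m$ is antiholomorphic).

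Next I would push $m$ to the orbit space. Let $\Phi_j$ be Fatou coordinates of $g$, so that $\Phi_j\circ g\circ\Phi_j\inv=T_1$, and let $M$ be the expression of $m$ in the time coordinate. Because $m$ commutes with $g$, the map $\widetilde M_j:=\Phi_{\rho(j)}\circ M\circ\Phi_j\inv$ commutes with $T_1$; hence, through $w=E(W)=\exp(-2i\pi W)$, it descends to a map from $\Ss^2_j\setminus\{0,\infty\}$ to $\Ss^2_{\rho(j)}\setminus\{0,\infty\}$ (see Section~\ref{sec:espace des orbites}). The key point, which I expect to be the main obstacle, is that this descended map is bounded near each puncture — because $m$ is an honest germ of diffeomorphism carrying the end of the petal $S_j$ to an end of $S_{\rho(j)}$ — so by the Riemann removable singularity theorem it extends to an automorphism of $\Pp^1$ preserving the pair $\{0,\infty\}$. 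A holomorphic such automorphism that respects the dynamical orientation fixes each puncture, so it is $w\mapsto\alpha w$; an antiholomorphic one behaves like $\Sigma$, which is $\tau(w)=1/\wbar$ on the spheres and swaps the two punctures, so it is $w\mapsto\alpha/\wbar$. Reading these back in the Fatou coordinate gives precisely $\widetilde M_j=T_{C_j}$ (holomorphic case) and $\widetilde M_j=\Sigma\circ T_{C_j}$ (antiholomorphic case), which is the ``Moreover'' assertion. One must check the boundedness near the punctures and verify which puncture maps to which; alternatively this step can be carried out concretely by the Fourier-series argument of Proposition~\ref{prop:coord Fatou}, writing $\widetilde M_j-id$ (resp.\ $\Sigma\circ\widetilde M_j-id$) as a $1$-periodic holomorphic function whose boundedness kills all non-constant modes.

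Finally I would deduce the trichotomy. If $m$ is holomorphic with $\widetilde M_j=T_{C_j}$, then either $\rho=id$ and all $C_j=0$, giving $m=id$, or $m$ shares the Fatou coordinates of $g$ and is thus a parabolic germ of the same codimension $k$ and formal invariant $b$, so $m\in\joli A_{k,b}$. If $m$ is antiholomorphic with $\widetilde M_j=\Sigma\circ T_{C_j}$, then $m\circ m$ is the translation $T_{C_j+\overline{C_{\rho(j)}}}$ in the Fatou coordinates; when this translation is the identity we get $m\circ m=id$, whence $m$ is analytically conjugate to $\sigma$ by the equivalence established earlier, and otherwise $m$ is an antiholomorphic parabolic germ of codimension $k$ and formal invariant $b$, so $m\in\overline{\joli A_{k,b}}$. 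This gives the stated alternative that $m$ is the identity, analytically conjugate to $\sigma$, or an element of $\AAkb$.
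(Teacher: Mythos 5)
Your proposal follows, at its core, the same route as the paper: the paper proves the ``Moreover'' part by observing that the first half of the proof of Proposition~\ref{prop:coord Fatou} applies almost verbatim to $m$ --- which is exactly the Fourier-series/boundedness argument you offer as your fallback --- and your $\Pp^1$-rigidity packaging is an equivalent way of saying the same thing. The structural difference is the order: the paper first gets the trichotomy ($m$ tangent to the identity and $m\neq id$ implies parabolic; codimension $k$ because $m$ permutes the orbits and the petals of $g$; formal invariant by comparing coefficients in $m\circ g=g\circ m$), and only then the Fatou form; you reverse this, reading the trichotomy off the Fatou form. One technical caveat on your rigidity step: the extension of the descended map across the punctures does not follow from ``boundedness near the ends'', which you do not actually prove (note that $w\mapsto e^{1/w}$ maps a punctured disc into $\Cp^\ast$ and has an essential singularity); the clean justification is that $m\inv$ also commutes with $g$ and descends to the inverse, so the descended map is an automorphism of $\Cp^\ast$, hence $w\mapsto\alpha w^{\pm1}$ (resp.\null{} $\alpha\wbar^{\pm1}$), and the exponent is pinned down because a lift of $w\mapsto\alpha/w$ has the form $W\mapsto -W+c$, which does not commute with $T_1$ --- a commutation you did establish for $\widetilde M_j$.

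There are two genuine soft spots. (a) You never invoke the hypothesis $m\in\DiffC$, i.e.\null{} multiplier $1$ (resp.\null{} tangent to $\sigma$); your $1$-jet computation only yields $a^k=1$, and at that level of generality your final step is false: a rotation of order $k$ commutes with the normal form $\vun$ (Corollary~\ref{cor:prop v}), is expressed in its Fatou coordinates as $T_{C_j}$ with a \emph{nontrivial} index shift $\rho$, and is neither the identity, nor conjugate to $\sigma$, nor parabolic, so ``otherwise $m$ is a parabolic germ'' does not follow when $\rho\neq id$. The multiplier-$1$ hypothesis is what forces $\rho=id$ in the holomorphic case (and $\rho(j)=-j$ in the antiholomorphic case) and makes ``$m\neq id$ $\Rightarrow$ parabolic'' automatic. (b) The membership $m\in\AAkb$ is asserted, not proved: codimension $k$ does follow from your Fatou form by transporting $T_{C_j}$, $C_j\neq0$, back through $\Phi_j$ and $Z_j$ (giving $m(z)=z+C_jz^{k+1}+o(z^{k+1})$), but you should carry out that computation; and the ``same formal invariant $b$'' claim is the truly delicate point. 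There, to be fair, the paper's own one-line ``compare degree by degree'' is also too quick, and the claim as literally stated is problematic: $m=g\circ g$ commutes with $g$ and, once prenormalized, has formal invariant $b/2$ (the r\'esidu it\'eratif rescales under iteration), so for $b\neq0$ not every commuting $m$ lies in $\AAkb$. The robust content of the lemma --- and the only part used later, e.g.\null{} in Theorem~\ref{theo:centr g} --- is the codimension-$k$ statement together with the form \eqref{eq:alternative h}, and that part your argument, once repaired as in (a), does deliver.
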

\begin{proof}
  If $m$ is not the identity or analytically conjugate
  to $\sigma$, then $m$ is parabolic. It must have 
  codimension $k$, since it maps the orbits of $g$
  on the orbits of $g$ and the Fatou petals of $g$
  on the Fatou petals of $g$. To see that $m$ has the
  same formal invariant, we compare degree by degree the 
  power series on both sides of the equation
  $m\circ g(z) = g\circ m(z)$.
  
  To see $m$ has one of the forms of~\eqref{eq:alternative h},
  the first part of the proof of Proposition~\ref{prop:coord 
  Fatou} applies almost identically to $m$.
\end{proof}

The obvious germs in the centralizer of $f$ or $g$ are the
iterates, the roots, and the iterates of the roots, which
we define below as the \emph{fractional iterates}. We 
prove in Theorem~\ref{theo:centr g} and~\ref{theo:centr f} that these
are all the possible elements of the centralizer, provided 
that $f$ or $g$ are not conjugate to their respective normal form.

\begin{deff}
  Let $f\in\overline{\joli A_{k,b}}$ (resp.\null{} $g\in\joli A_{k,b}$).
  We say that $m\in\DiffC$ is a
  \emph{fractional iterate of order $p$} of $f$ (resp.\null{} $g$)
  if there exists $q\in\Z$ such that $\gcd(p,q)=1$ and
  $m^{\circ p} = f^{\circ q}$ (resp.\null{} $m^{\circ p} = g^{\circ q}$).
\end{deff}

Note that a fractional iterate of order one is just an 
iterate of $f$ or of $g$.

\begin{theo}\label{theo:centr g}
  Let $g\in\joli A_{k,b}$ with $b\in\R$.
  Then we have one of the following cases:
  \begin{enumerate}
    \item $g$ is embeddable, i.e.\null{} $g = h\circ \vun\circ h\inv$.
    Let $g_t = h\circ v^t\circ h\inv$. Then the centralizer of $g$ is
    $$
      Z_{g} = \{g_t \mid t\in\Cp\}\cup \{h\circ \sigma\circ v^t\circ h\inv\mid t\in\Cp\}.
    $$
    \item$g$ is not embeddable, then $Z_g$ contains only
    holomorphic and antiholomorphic fractional iterates of $g$ and
    Schwarz reflections. More precisely, there exists $p\in\N\setminus\{0\}$ 
    such that the centralizer of $g$ is one of the following:\vskip2\jot
    \begin{itemize}\bgroup\let\vcenter=\vtop
    \item $
      \begin{aligned}
        Z_g = \bigcup_{d\mid p}\,\{\hbox{fractional iterates of order $d$ of $g$}\},
      \end{aligned}
    $\vskip2\jot
    \item $
      \begin{aligned}
        Z_g &= \bigcup_{d\mid p}
	  \{\hbox{hol.\null{} fractional iterates of order $d$ of $g$}\} \cup{}\\[-3\jot]
          &\qquad\qquad
	    \bigcup_{d\mid 2p\atop d\,{\it even}}
	    \{\hbox{antihol.\null{} fractional iterates of order $d$ of $g$}\},
      \end{aligned}
    $\vskip2\jot
    \item $
      \begin{aligned}
        Z_g &= \bigcup_{d\mid p}
	\{\hbox{hol.\null{} fractional iterates of order $d$ of $g$}\} \cup{}\\[-2\jot]
          &\qquad\qquad\bigcup_{d\mid p\atop d\,{\it even}}
	    \{\hbox{antihol.\null{} fractional iterates of order $d$ of $g$}\}\\[-2\jot]
	  &\qquad\qquad\qquad\qquad\cup\{\hbox{Schwarz reflection tangent to $\sigma$}\}.
      \end{aligned}
    $\vskip2\jot
    \egroup\end{itemize}\vskip2\jot
    The centralizer includes a Schwarz reflection tangent 
    to $\sigma$ if and only if $g$ is analytically conjugate to 
    a germ with real coefficients (see Theorem~\ref{theo:g reel}).
  \end{enumerate}
\end{theo}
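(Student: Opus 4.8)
The plan is to use Lemma~\ref{lem:h Fatou} to translate the problem into the language of transition functions, and then to treat the holomorphic and antiholomorphic parts of the centralizer $Z_g\subset\DiffC$ separately. By Lemma~\ref{lem:h Fatou}, every $m\in Z_g$ is the identity, is conjugate to $\sigma$, or lies in $\AAkb$, and in the Fatou coordinates of $g$ it reads $T_C$ or $\Sigma\circ T_C$ on each petal. Since $m$ is tangent to the identity (resp.\null{} to $\sigma$) it fixes (resp.\null{} reflects across the real axis) each of the $2k$ asymptotic directions, so a holomorphic $m$ preserves every petal $U_j$ while an antiholomorphic $m$ sends $U_j$ to $U_{-j}$. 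Consequently the holomorphic subgroup $Z_g^+=Z_g\cap\Diff_1(0,\Cp)$ has index at most two, and the antiholomorphic part, when nonempty, is a single coset of $Z_g^+$.

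First I would compute $Z_g^+$. If $g$ is embeddable, the time charts are Fatou coordinates and the transition functions are translations; then every $T_C$ with $C\in\Cp$ is compatible across consecutive petals and produces the flow $\{g_t\}$, while every $\Sigma\circ T_C$ is likewise compatible and produces $\{h\circ\sigma\circ v^t\circ h\inv\}$, which is conclusion~(1). If $g$ is not embeddable, I would impose compatibility of $m=T_{C_j}$ with the transition functions on overlaps: writing $\Psi_j(W)=W+c_j+\sum_n c_{n,j}e^{2i\pi nW}$ as in~\eqref{eq:Psi fourier}, the relation $\Psi_j\circ T_{C_j}=T_{C_{j+1}}\circ\Psi_j$ forces all $C_j$ to equal one real constant $C$ with $e^{2i\pi nC}=1$ for every frequency $n$ such that $c_{n,j}\not=0$. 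Hence $C\in\frac1p\Z$ with $p=\gcd\{\,|n|:c_{n,j}\not=0\,\}$, and the realization in part~2 of Theorem~\ref{theo:class} shows every such $C$ descends to a genuine germ; thus $Z_g^+=\{g^{q/p}:q\in\Z\}=\bigcup_{d\mid p}\{\hbox{holomorphic fractional iterates of order }d\}$.

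Next I would analyse the antiholomorphic coset. An antiholomorphic $m=\Sigma\circ T_C$ commutes with $g=T_1$ automatically, because $T_1$ commutes with $\Sigma$; the genuine constraint is compatibility across petals, which is precisely a reflection symmetry of the modulus with respect to the real axis, of the type appearing in~\eqref{eq:racine existe}. Squaring gives $m^{\circ2}=T_{2\Re C}=g^{2\Re C}\in Z_g^+$, so $\Re C\in\frac1{2p}\Z$; and since composing $m$ with $g^{q/p}$ shifts $\Re C$ by $\frac1p\Z$, the residue $\Re C\bmod\frac1p$ is a well-defined invariant of the coset, equal to $0$ or $\frac1{2p}$. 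When this residue is $0$ one may normalise a generator to an involution $m_0=\Sigma\circ T_{iy_0}$ with $m_0^{\circ2}=\mathrm{id}$, a Schwarz reflection tangent to $\sigma$; by Theorem~\ref{theo:g reel} the existence of such an involution is equivalent to $g$ being analytically conjugate to a germ with real coefficients, which yields the final assertion. Sorting the remaining elements of the coset according to the minimal even $d$ with $m^{\circ d}\in\langle g\rangle$ then expresses $Z_g$ in one of the three displayed forms.

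The main obstacle is this last sorting: showing that the residue $\Re C\bmod\frac1p$ genuinely takes only the two values $0$ and $\frac1{2p}$, and converting the admissible values of $\Re C$ into the precise divisibility conditions (order dividing $p$ versus $2p$, with $d$ even) that distinguish the three alternatives of~(2). This requires tracking how the single reflection $s_0(j)=-j$ interacts with the cyclic group $\langle g^{1/p}\rangle$ and checking, via the construction of part~2 of Theorem~\ref{theo:class}, that each compatible $\Sigma\circ T_C$ descends to an honest rather than a merely formal germ. The fact that the antiholomorphic elements form a single coset of $Z_g^+$ keeps this case analysis finite.
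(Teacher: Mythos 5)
Your proposal follows essentially the same route as the paper's proof: reduce via Lemma~\ref{lem:h Fatou} to maps $T_C$ or $\Sigma\circ T_C$ in Fatou coordinates, compute the holomorphic centralizer by the Fourier-support argument (yielding $p$ and the fractional iterates), and treat the antiholomorphic elements by squaring them into the holomorphic centralizer, with your dichotomy $\Re C \bmod \frac1p \in \{0,\frac{1}{2p}\}$ being exactly the paper's parity split (its parameters $t=\frac{a}{2r}+iy$ with $p/r$ and $a$ both odd versus not) separating the order-$2p$ antiholomorphic roots from the Schwarz-reflection case settled by Theorem~\ref{theo:g reel}. Your coset/residue bookkeeping is a cleaner repackaging of the paper's explicit coefficient computation (it even handles the imaginary part $y$ automatically), and the final sorting you defer is left at a comparable level of informality in the paper's own proof.
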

\begin{proof}
  \noindent 1.\null{}
  Since $g$ is embeddable, each of its transition functions
  is a translation, so any $T_t$ will commute with them.
  Each $T_t$ represents a germ analytically conjugate
  to $g^t$ for some $t\in\Cp$.
  In the case $m$ is antiholomorphic, $\Sigma\circ T_t$ is compatible with
  the transition functions with no restriction on $t$. It corresponds to 
  $h\circ \sigma\circ v^{t}\circ h\inv$ for $t\in\Cp$.

  \smallbreak
  \noindent 2.\null{} Since $g$ is not embeddable, 
  one of the transition functions is not a 
  translation. Suppose $g_1\in \DiffC$ is holomorphic and
  commutes with $g$. By Lemma~\ref{lem:h Fatou}, $g_1$
  becomes $T_t$ in the Fatou coordinates for some $t\in\Cp$.
  Then $T_t$ must  commute with the transition
  functions. This will only happen if
  the transition functions expand in the form
  \begin{equation}\label{eq:Psi fourier 2}
    \Psi_j(W) = W + (-1)^j{i\pi b\over k}
      + \sum_{\ell\in\Z^\ast} c_{\ell p,j}e^{2i\pi \ell p W}
  \end{equation}
  and $t  = {a\over p}$ for some $a,p\in\N$.
  This corresponds to $g_1$ being a 
  holomorphic fractional iterate of order $p$ of $g$.
  Let $p$ denote the maximal order of holomorphic
  roots of $g$.

  Suppose $f_1\in\DiffC$ is antiholomorphic and commutes
  with $g$. We can suppose $f_1$ is not a Schwarz reflection, 
  since this is covered in Theorem~\ref{theo:g reel}. Then by
  Lemma~\ref{lem:h Fatou}, $f_1$ is $\STtt_t$ in the Fatou coordinates 
  and it must be compatible with the transition functions. Therefore,
  $T_{2\Re t}$ commutes with the $\Psi_j$'s, and by the
  previous case, we have $2\Re t = {a \over r}$, with
  $\gcd(a,r) = 1$ and $r\,|\, p$. We have 
  $\STtt_t\circ \Psi_j = \Psi_{-j}\circ\STtt_t$
  for some representative of the modulus
  if and only if
  \begin{equation}\label{eq:coeff fourier commute}
    \overline{c_{n,j}} = e^{-i\pi an/r}e^{2n\pi y}c_{-n,-j}
  \end{equation}
  and $t = {a\over 2r} + iy$ ($y\in\R$), where $c_{n,j}$ 
  are the Fourier coefficients of $\Psi_j$ 
  from~\eqref{eq:Psi fourier}. Of course, generically there exists
  no $y$ that satisfy~\eqref{eq:coeff fourier commute}, so 
  there are no antiholomorphic germs that commute with $g$ in 
  the generic case. 
  When such a $y$ exists, by changing the $\Psi_j$'s 
  by $T_{-{iy\over2}}\circ \Psi_j\circ T_{{iy\over2}}$, we can suppose
  $y=0$.

  Since $\Ttt_{1\over p}$ commutes with the $\Psi_j$'s, 
  we have $c_{n,j} = 0$ if \hbox{$p\!\!\not|\,n$},
  so that~\eqref{eq:coeff fourier commute} becomes 
  $\overline{c_{sp,j}} = e^{-i\pi s ap/r} c_{-sp,-j}$.

  If $p/r$ and $a$ are odd, then~\eqref{eq:coeff fourier commute} 
  becomes
  $$
    \overline{c_{sp,j}} = \begin{cases}
      -c_{sp,j}, & \hbox{$s$ odd,}\\
      c_{sp,j}, & \hbox{$s$ even.}
      \end{cases}
  $$
  This is precisely the condition for $g$ to have an
  antiholomorphic root of order $2p$.

  If $p/r$ or $a$ is even, then $e^{-i\pi asp/r} = 1$ for all $s$,
  so we obtain the condition necessary for $\Sigma$
  to be compatible with the transition functions, and then
  there is a Schwarz reflection in the centralizer of $g$. In
  that case, then the highest orders
  of the holomorphic and antiholomorphic iterates coincide.
\end{proof}

Lastly, we will study the centralizer of $f\in\overline{\joli A_{k,b}}$
in $\DiffC$. Of course, we have $Z_f \subseteq Z_{f\circ f}$, so the
details are similar to the previous theorem.

\penalty-500
\begin{theo}\label{theo:centr f}
  Let $f\in\overline{\joli A_{k,b}}$. Then we have one of
  the following cases:
  \begin{enumerate}
    \item $f$ is embeddable, i.e.\null{} $f = h\circ \sigma\circ\vt\circ h\inv$,
    then its centralizer is
    $$
     \kern\parindent Z_{f} = \{h\circ v^t\circ h\inv\,|\, t\in\R\}
        \cup \{h\circ \sigma\circ v^{t}\circ h\inv\,|\, t\in\R\}.
    $$
    \item $f$ is not embeddable, then $Z_f$ contains only
    holomorphic and antiholomorphic fractional iterates of $f$ and
    Schwarz reflections. More precisely, there exists $p\in\N\setminus\{0\}$ 
    such that the centralizer of $f$ is one of the following:\vskip2\jot\penalty-500
    \begin{itemize}
    \item if $p$ is odd, then \vadjust{\vskip2\jot}\newline
    $
      \begin{aligned}
        Z_f ={} &\bigcup_{d\mid p}
	    \{\hbox{hol.\null{} fractional iterates of order $d$ of $f\circ f$}\}\cup{}\\
          &\qquad\bigcup_{d\mid p\atop d\,{\it odd}}
	    \{\hbox{antihol.\null{} fractional iterates of order $d$ of $f$}\};\\
      \end{aligned}
    $\vskip2\jot
    \item if $p$ is even, then \vadjust{\penalty5000\vskip2\jot}\newline
    $\hskip2\parindent
      \begin{aligned}
        Z_f ={} &\bigcup_{d\mid p}
	    \{\hbox{hol.\null{} fractional iterates of order $d$ of $f\circ f$}\}\cup{}\\
          &\qquad\bigcup_{d\mid p\atop d\,{\it odd}}
	    \{\hbox{antihol.\null{} fractional iterates of order $d$ of $f$}\};\\
	  &\qquad\qquad\cup\{\hbox{Schwarz reflection tangent to $\sigma$}\}.\\
      \end{aligned}
    $
    \end{itemize}\vskip2\jot
    The centralizer includes a Schwarz reflection tangent 
    to $\sigma$ if and only if $f$ is analytically conjugate to 
    a germ with real coefficients (see Theorem~\ref{theo:f reel}).
  \end{enumerate}
\end{theo}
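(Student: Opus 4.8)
The plan is to exploit the inclusion $Z_f\subseteq Z_{f\circ f}=Z_g$ recorded just before the statement, reducing everything to the already-established description of $Z_g$ in Theorem~\ref{theo:centr g} together with Lemma~\ref{lem:h Fatou}. Working in the Fatou coordinates of $g=f\circ f$ — which are simultaneously Fatou coordinates of $f$ by Proposition~\ref{prop:coord Fatou}, since there $f$ reads as $\STt$ and $\STt\circ\STt=T_1$ — Lemma~\ref{lem:h Fatou} says that any $m\in Z_f$ is the identity, a Schwarz reflection, or a parabolic germ of codimension $k$ with formal invariant $b$, reading as $T_C$ if holomorphic and as $\Sigma\circ T_C$ if antiholomorphic. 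The only new ingredient beyond Theorem~\ref{theo:centr g} is the requirement that $m$ commute with $f=\STt$, and a one-line computation in the coordinate $W$ gives $T_C\circ\STt=\STt\circ T_C$ and $(\Sigma\circ T_C)\circ\STt=\STt\circ(\Sigma\circ T_C)$ if and only if $C\in\R$. This real-translation constraint drives the entire classification.

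In the embeddable case the transition functions of $f$ are translations, so $g$ is embeddable and $Z_g$ is the one of Theorem~\ref{theo:centr g}(1); intersecting its two one-parameter families with the constraint $C\in\R$ cuts the complex parameter $t\in\Cp$ down to $t\in\R$ in both the holomorphic family $h\circ v^t\circ h\inv$ and the antiholomorphic family $h\circ\sigma\circ v^t\circ h\inv$, which is exactly the asserted $Z_f$. In the non-embeddable case I would split $Z_f$ into its holomorphic and antiholomorphic parts. The holomorphic elements of $Z_g$ are the translations $T_s$ with $s\in\frac1p\Z$, i.e.\ the holomorphic fractional iterates of $f\circ f$ of order $d\mid p$, where $p$ is the maximal order of a holomorphic root of $g$; these $s$ are real, so each already commutes with $f$, whence the holomorphic part of $Z_f$ equals that of $Z_g$. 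For the antiholomorphic part, the map $m\mapsto m\circ f\inv$ is a bijection from the antiholomorphic elements of $Z_f$ onto its holomorphic elements, so the antiholomorphic part is exactly the coset $\{T_s\circ f:s\in\frac1p\Z\}=\{\Sigma\circ T_{1/2+s}:s\in\frac1p\Z\}$; each $T_s\circ f$ is a legitimate germ because $T_s$ commutes with the $\Psi_j$ while $f$ obeys~\eqref{eq:Psi commute}.

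The structural heart is that the parity of $p$ governs which of the two displayed cases occurs. Since the normalized transition functions have Fourier frequencies that are multiples of $p$ by~\eqref{eq:Psi fourier 2}, the half-translation $\Tt=T_{1/2}$ commutes with every $\Psi_j$ if and only if $p$ is even; by condition (3) of Theorem~\ref{theo:f reel} this is equivalent to $f$ being analytically conjugate to a germ with real coefficients, and (since $\Sigma$ always commutes with $f$, as $\Sigma\circ f=\Tt=f\circ\Sigma$) also to $\Sigma=\mTt\circ f$ being a genuine element of $Z_f$. Hence $p$ even $\iff$ $f$ is conjugate to a real germ $\iff$ the centralizer contains a Schwarz reflection. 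Finally, reducing $t=\frac{p+2a}{2p}$ to lowest terms, $\Sigma\circ T_t$ is an antiholomorphic fractional iterate of $f$ of (necessarily odd) order $d\mid p$ precisely when its reduced denominator is $2d$ with $d$ odd, i.e.\ exactly when the denominator of $t$ contains a single factor of $2$; for $p$ odd every coset element has this form, while for $p$ even the remaining ones are the composites of the holomorphic fractional iterates with $\Sigma$, already generated by the pieces listed.

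I expect the main obstacle to be exactly this last, number-theoretic bookkeeping: matching each admissible translation amount $s\in\frac1p\Z$ to a fractional iterate of the correct order and parity, tracking the $2$-adic valuation of these amounts (in particular when $4\mid p$), and checking that the odd-order antiholomorphic fractional iterates of $f$, the holomorphic fractional iterates of $f\circ f$, and — when $p$ is even — the single Schwarz reflection $\Sigma$ together exhaust $Z_f$ with nothing omitted. The clean dichotomy $p$ even $\iff$ real germ makes the two cases fall out cleanly, but reconciling the "order $d\mid p$, $d$ odd'' indexing of the antiholomorphic fractional iterates with the denominators of the translation amounts is the delicate step that the full proof must pin down.
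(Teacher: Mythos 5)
Your proposal is correct, and it shares the paper's frame: everything is read in the Fatou coordinates of $g=f\circ f$, where $f$ is $\STt$, the inputs are Lemma~\ref{lem:h Fatou} and Theorem~\ref{theo:centr g}, and the embeddable case is handled identically (commutation with $\STt$ forces real translation constants). For the non-embeddable case, however, your mechanism is genuinely different. The paper argues through Fourier coefficients: combining $\Psi_j\circ\STt=\STt\circ\Psi_{-j}$ with the vanishing $c_{n,j}=0$ for $n$ not a multiple of $p$ gives $\overline{c_{\ell p,j}}=e^{i\pi\ell p}c_{-\ell p,-j}$, whose parity-of-$p$ dichotomy is then interpreted as existence of an antiholomorphic $p$-th root of $f$ ($p$ odd) or compatibility of $\Sigma$ ($p$ even), supplemented by the computation $(\Sigma\circ T_{1/q})^{\circ q/2}=\STt$ to absorb the antiholomorphic fractional iterates of $f\circ f$. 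You bypass this identity altogether: the observation that the holomorphic part of $Z_f$ coincides with that of $Z_g$ (the latter consists of the real translations $T_{a/p}$, which automatically commute with $\STt$), together with the involutive bijection $m\mapsto m\circ f\inv$ between the antiholomorphic and holomorphic parts, hands you the complete antiholomorphic part as the coset $\{\Sigma\circ T_{1/2+a/p}\}$ in one stroke; the dichotomy then comes from maximality of $p$ ($\Tt$ commutes with the $\Psi_j$ if and only if $p$ is even, since otherwise all frequencies would be multiples of $2p$) and from Theorem~\ref{theo:f reel}. What your route buys: exhaustiveness of the list reduces to the arithmetic you sketch, which is indeed correct ($\Sigma\circ T_t$ is an antiholomorphic fractional iterate of $f$ of odd order $d$ exactly when the reduced denominator of $t$ is $2d$ with $d$ odd), and it makes visible that for $p$ even the antiholomorphic elements are precisely the composites of the Schwarz reflection with the holomorphic fractional iterates---so the displayed union in the statement must be read as closed under such composition (e.g.\ $\Sigma\circ T_{1/p}$ for $4\mid p$, or $\Sigma\circ T_1$, is neither a fractional iterate of $f$ nor an involution), a point the paper's proof leaves implicit. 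What the paper's route buys is the direct link between the centralizer conditions and the root-existence and real-coefficient criteria of Sections~\ref{sec:racine} and~\ref{sec:forme reelle}, which is how the final ``Schwarz reflection iff real coefficients'' equivalence is anchored there. Both arguments rest on the same two pillars and are at a comparable level of rigor.
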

\begin{proof}
  \noindent1.\null{} In the Fatou coordinates, $T_t$ must
  commute with $\STt$, so $t$ must be real. The rest of
  the details are exactly as in the proof of Theorem~\ref{theo:centr g}.

  \smallskip\noindent2.\null{}
  Let $p\in \N$ be the highest order of the holomorphic
  roots of $f\circ f$. This number might be $1$, in which case
  $Z_f$ contains only iterates of $f$. Indeed, if $p=1$, $Z_f$
  does not contain a  Schwarz reflection, otherwise $f\circ f$ would have the
  holomorphic root $\sigma\circ f$, which would contradict that
  the highest order of the holomorphic root is $1$.
  Let us suppose that $p>1$.

  Let $(\Psi_1,\ldots,\Psi_k)$ be a representative of the analytic
  invariant of $f$. Let $\Psi_{-j}$ be determined by~\eqref{eq:Psi moins}
  for $j=1,\ldots,k$. Suppose that the transition functions have a
  Fourier expansion
  $$
    \Psi_j(W) = W + C_j + \sum_{n}
      c_{n,j}e^{2i\pi nW},
  $$
  for all $j$, where $\overline{C_{-j}} = C_j = (-1)^j{i\pi b\over k}$ for $j>0$. 
  Because the $\Psi_j$'s satisfy $\Psi_j\circ\STt = \STt \circ \Psi_{-j}$,
  we obtain
  \begin{equation}\label{eq:coeff bar}
    \overline{c_{n,j}} = e^{i\pi n} c_{-n,-j}.
  \end{equation}
  Since $f\circ f$ has a root of order $p$, we also have
  $T_{1\over p}\circ \Psi_j = \Psi_j\circ T_{1\over p}$,
  which means that $c_{n,j} = 0$ for all $j$ when \hbox{$p\!\!\not|\, n$}.
  If we write $n = \ell p$ in~\eqref{eq:coeff bar}, it becomes
  $$
    \overline{c_{\ell p,j}} = e^{i\pi \ell p} c_{-\ell p,-j},
    \rlap{\qquad $(\ell\in\Z)$.}
  $$
  We have two cases.
  
  If $p$ is odd, then we have
  $$
    \overline{c_{\ell p,j}} = 
    \begin{cases}
      c_{-\ell p,-j} & \hbox{ if $\ell$ is odd,}\\
      -c_{-\ell p,-j} & \hbox{ if $\ell$ is even.}
    \end{cases}
  $$
  This corresponds to the necessary condition for
  $f$ to have an antiholomorphic $p$-th root.
  Therefore, the maximal order of antiholomorphic
  root of $f$ is at least $p$. To see that it is exactly $p$,
  we simply note that if $f_1$ is an antiholomorphic root
  of order $q \geq p$, then $f_1\circ f_1$ is a holomorphic
  root of order $q$ of $f\circ f$, and because $p$
  is maximal, we must have $p=q$. 
  We must also prove that antiholomorphic fractional iterates
  of $f\circ f$ are fractional iterates of $f$. Suppose
  $f\circ f$ has an antiholomorphic root of order $q$,
  where $q$ must be even.
%
  Then we have ${q\over 2}\,|\, p$. Since $p$
  is odd, we know that $q\over 2$ must be
  odd. It follows that $(\STtt_{1\over q})^{\circ {q\over 2}} = \STt$.

  If $p$ is even, then we have $\overline{c_{n,j}} = c_{-n,-j}$,
  which corresponds to the condition necessary for $\Sigma$
  to be compatible with the $\Psi_j$'s. In this case,
  the centralizer of $f$ contains a Schwarz reflection,
  and it follows that the highest order of antiholomorphic
  fractional iterates of $f$ and the highest order of holomorphic
  fractional iterates of $f\circ f$ coincide.

%
\end{proof}

\color{black}
\bigbreak
\bibliographystyle{plain}
\bibliography{ref}

\end{document}